\newtheorem{theorem}{Theorem}[section]
\newtheorem{lemma}[theorem]{Lemma}
\newtheorem{corollary}[theorem]{Corollary}
\theoremstyle{definition}
\newtheorem{definition}[theorem]{Definition}
\theoremstyle{remark}
\newtheorem{remark}[theorem]{Remark}
\newtheorem{example}[theorem]{Example}
\font \mymathbb = bbold10 at 11pt
\newcommand{\one}{\mbox{\mymathbb{1}}}   %Indicator functions
\newcommand{\beq}{\begin{equation}}
\newcommand{\eeq}{\end{equation}}
\numberwithin{equation}{section}
\definecolor{my-blue}{rgb}{0.0,0.0,0.6}
\definecolor{my-red}{rgb}{0.5,0.0,0.0}
\definecolor{my-green}{rgb}{0.0,0.5,0.0}
\newcommand*\X{\cX}
\newcommand\unK{K} %The lower bound K
\newcommand\Iqr{I_{q,3}}  %level 3 quenched rate function (forward paths.. main result) r for R_n
\newcommand\Iqell{I_{q,2,\ell}}  %\ell-variate level 2 rate 
\newcommand\pell{\hat p_\ell} %uniform transitions on \bigom_\ell
\newcommand\ratell{H_{\ell,\P}}  %entropy set to \infty at bad measures, ell-variate case 
\newcommand\bigom{\bm\Omega} %bold Omega: space of paths and environments
\newcommand\range{{\sR}} % range of first step
\newcommand\wz{\eta} %environment and path
\newcommand\MC{\cQ} %set of Markov chains
\newcommand\measures{\cM_1} %set of probability measures
\newcommand\Sopr{S^+}
\newcommand\pr{p^+}  %transition kernels:
\newcommand\K{\cK_\ell} %class K  (\bigom_\ell\times\range)
\newcommand\Ll{\cL} %class L  (\bigom_\ell\times\range)
\newcommand\Uset{\cU} %convex hull of \range
\newcommand\unif{\kU_b} %bounded uniformly continuous functions
\newcommand\pres{\Lambda}  %pressure
\newcommand\oneell{\ell} %R_n^\oneell
\newcommand\oneinfty{\infty} %R_n^\oneinfty
\newcommand\gr{{\mathcal G}}%\!\!\!\scriptr{r}(\range)}  %Group generated by \range
\newcommand*{\Z}{{\mathbb Z}} %whole numbers
\newcommand*{\kS}{{\mathfrak S}}
\newcommand*{\cP}{{\mathcal P}}
\newcommand*{\w}{\omega}
\newcommand*{\sR}{{\mathscr R}}
\newcommand*{\E}{{\mathbb E}} %expectation under \P
\newcommand*{\cX}{{\mathcal X}}
\newcommand*{\sB}{{\mathscr B}}
\newcommand*{\cM}{{\mathcal M}}
\newcommand*{\N}{{\mathbb N}} %natural numbers
\newcommand*{\R}{{\mathbb R}} %real numbers
\newcommand*{\cQ}{{\mathcal Q}}
\providecommand{\abs}[1]{\left\vert#1\right\vert}
\newcommand*{\zbar}{{\bar z}}
\newcommand*{\cU}{{\mathcal U}}
\newcommand*{\kU}{{\mathfrak U}}
\newcommand*{\Q}{{\mathbb Q}}
\renewcommand*{\P}{{\mathbb P}} %\P already exists
\newcommand*{\xhat}{{\hat x}}  %used to define [n\xi]
\newcommand*{\cK}{{\mathcal K}}
\newcommand*{\cL}{{\mathcal L}}
\newcommand*{\esssup}{\mathop{{\rm ess~sup}}}
\newcommand*{\e}{\varepsilon}
\newcommand*{\Fhat}{{\widehat F}}
\newcommand*{\Ghat}{{\widehat G}}
\newcommand*{\zhat}{{\hat z}}
\newcommand*{\bfu}{{\mathbf u}}
\newcommand*{\ce}[1]{\left\lceil{#1}\right\rceil}
\newcommand*{\xtil}{{\tilde x}}
\newcommand*{\ztil}{{\tilde z}}
\newcommand*{\xbar}{{\bar x}}
\newcommand*{\abar}{{\bar a}} 
\newcommand*{\uhat}{{\hat u}} 
\newcommand*{\Ftil}{{\widetilde F}}
\newcommand*{\Rtil}{{\widetilde R}}
\newcommand*{\Rhat}{{\widehat R}}
\newcommand*{\nn}{\nonumber}
\newcommand*{\fhat}{{\hat f}}
\newcommand*{\Rbar}{{\bar R}}
\newcommand*{\sF}{{\mathscr F}}
\newcommand\cA{{\mathcal A}}
\newcommand{\refp}{\hat p}    %%%% reference kernel 
\DeclareMathOperator\ri{ri} 
\newcommand\Iqella{I_{q,2,\ell+1}}  %\ell+1 -variate level 2 rate 
\providecommand{\absa}[1]{\lvert#1\rvert}
 \def\wt{\widetilde}   
\begin{document}                        %% Standard LaTeX command

%%      ---------------------------------------------------------------------
%%      -------------------------------- TITLE -----------------------------
%%      ---------------------------------------------------------------------

\title{Quenched Free Energy and Large Deviations for Random Walks in Random Potentials}

%%      ---------------------------------------------------------------------
%%      ------------------------------- AUTHORS -----------------------------
%%      ---------------------------------------------------------------------
\author{Firas Rassoul-Agha}{University of Utah}
\author{Timo Sepp\"al\"ainen}{University of Wisconsin-Madison}
\author{Atilla Yilmaz}{University of California-Berkeley}

% EXAMPLE: \author{Paris Hilton}{Universit‰ Paris-Sorbonne (Paris IV)}
% Uncomment the following lines as needed
%\author{*** SECOND AUTHOR'S NAME ***}{*** SECOND AUTHOR'S AFFILIATION WHEN ARTICLE WAS WRITTEN ***}
%\author{*** THIRD AUTHOR'S NAME ***}{*** THIRD AUTHOR'S AFFILIATION WHEN ARTICLE WAS WRITTEN ***}
%\author{*** FOURTH AUTHOR'S NAME ***}{*** FOURTH AUTHOR'S AFFILIATION WHEN ARTICLE WAS WRITTEN ***}
%\author{*** FIFTH AUTHOR'S NAME ***}{*** FIFTH AUTHOR'S AFFILIATION WHEN ARTICLE WAS WRITTEN ***}
% Add additional names and affiliations as necessary using above format
%%      ---------------------------------------------------------------------
%%      --------------------------- DEDICATION  (OPTIONAL)------------------- 
%%      ---------------------------------------------------------------------

%       Uncomment the following line to insert a dedication.

%\dedication{ *** DEDICATION *** }        %% Enter dedication between braces.

%%      ---------------------------------------------------------------------
%%      --------------------------- ABSTRACT (OPTIONAL)----------------------
%%      ---------------------------------------------------------------------

%% ***** UNCOMMENT THE FOLLOWING TO INSERT AN ABSTRACT

\begin{abstract}
We study   quenched distributions  on random walks  in a random potential
on integer lattices  of arbitrary dimension and 
with an arbitrary finite set of admissible steps. 
 The potential can be unbounded and can depend on a few steps of the walk.  
Directed,  undirected and stretched  polymers, as well as random walk in random environment, 
  are   covered.
The restriction needed is on the moment
of the potential, in relation to the degree of mixing of the ergodic environment. 
We derive  two variational formulas for the limiting quenched free energy and 
%A variational formula for the point-to-point free energy is also given.
prove  a process-level quenched  large deviation principle for the empirical measure.   
 As a corollary we obtain LDPs for types of random walk in random environment
 not covered by earlier results. 
\end{abstract}

% With AMS-LaTeX, \maketitle follows the abstract
\maketitle   

%%      ---------------------------------------------------------------------
%%      ------------------- TABLE OF CONTENTS (OPTIONAL) --------------------
%%      ---------------------------------------------------------------------

%% ***** IF YOUR PAPER IS OVER 40 PAGES AND YOU WISH TO HAVE A TABLE
%% ***** OF CONTENTS, PLEASE UNCOMMENT THE FOLLOWING LINE

% \tableofcontents

%%      ---------------------------------------------------------------------
%%      ---------------------------- BODY OF PAPER --------------------------
%%      ---------------------------------------------------------------------

%%      Please input or insert the body of your paper here.

\section{Introduction}
This paper investigates the limiting free energy and  large deviations 
  for  several  much-studied lattice models
of random motion in a random  medium.  These include walks in random
potentials, also called polymer models, and the standard 
random walk in random environment (RWRE).  We 
derive variational formulas for the free energy   
and process-level large deviations for the empirical measure.  

\subsection{Walks in random potentials and environments}  \label{sec-model}
We call our basic model  {\sl random walk in a random potential} (RWRP).
A special case is  random walk in random environment (RWRE).   Fix a dimension $d\in\N$. 
There are three ingredients to the model: (i) a reference random walk on $\Z^d$, (ii) an environment, and
(iii)  a potential.

(i) Fix a finite subset $\range\subset\Z^d$.   Let  $P_x$ denote the distribution of the  
discrete time random walk on $\Z^d$ that starts at $x$ and has jump probability 
$\refp(z)=1/|\range|$ for $z\in\range$ and $\refp(z)=0$ otherwise. $E_x$  is expectation under $P_x$.   The walk is denoted by $X_{0,\infty}=(X_n)_{n\ge 0}$. 
Let $\gr$ be the additive subgroup of $\Z^d$  generated by $\range$.

(ii) An {\sl environment} $\w$ is a sample point from
 a probability space $(\Omega, \kS, \P)$.
$\Omega$ comes  equipped with a group   
  $\{T_z:{z\in\gr}\}$   of  measurable commuting  bijections that satisfy 
$T_{x+y}=T_xT_y$ and $T_0$ is the identity.
  $\P$ is a $\{T_z:z\in\gr\}$-invariant probability measure on $(\Omega,\kS)$ 
  that is ergodic under this group.
In other words, 
if $A\in\kS$ satisfies $T_zA=A$ for all $z\in\gr$ then  $\P(A)=0$ or $1$. $\E$ will denote expectation relative to $\P$.  We call  Ê$(\Omega,\kS,\P,\{T_z:z\in\gr\})$Ê
  a {\sl measurable ergodic dynamical system}.   

(iii) A  {\sl potential}  is   a  measurable  function $V:\Omega\times\range^\ell\to\R$, for
some integer $\ell\ge0$.

Given an environment $\w$ and  a starting point $x\in\Z^d$,  for    $n\ge1$   define the 
 {\sl quenched polymer measures} 
\beq	\begin{aligned}
	Q^{V,\w}_{n,x}\{X_{0,\infty}\in A\}
	&=\frac1{Z_{n,x}^{V,\w}}E_x\big[e^{-\sum_{k=0}^{n-1}V(T_{X_k}\w,Z_{k+1,k+\ell})}
	\one_A(X_{0,\infty}) \big]\\
%	&=\frac1{ Z_{n,x}^{V,\w} } |\range|^{-n}\,e^{-\sum_{k=0}^{n-1}V(T_{x_k}\w,z_{k+1,k+\ell})} 
	\end{aligned}  \label{q-meas}\eeq
  normalized by  the {\sl quenched partition function} 
	\[Z_{n,x}^{V,\w}=E_x\big[e^{-\sum_{k=0}^{n-1}V(T_{X_k}\w,Z_{k+1,k+\ell})}\big]
	=\sum_{z_{1,n+\ell-1}\in\range^{n+\ell-1}}\ {|\range|^{-n-\ell+1}}\,e^{-\sum_{k=0}^{n-1}V(T_{x_k}\w,z_{k+1,k+\ell})}. 
	\]
  $Z_k=X_k-X_{k-1}$
is a step of the walk and vectors are    
$X_{i,j}=(X_i,X_{i+1},\dotsc,X_j)$.  
$Q_{n,x}^{V,\w}$  represents the evolution of the polymer in a ``frozen''  environment $\w$. (The picture is that of a heated
sword quenched in water.)
 Let us mention two models of special importance.
%Including the environment gives rise to measures $Q_{n,x}^{V,\w}(dx_{0,n})\P(d\w)$ that  we call the {\sl averaged joint polymer measures} or the {\sl quenched joint polymer measures}. The marginals of these joint measures onto paths $X_{0,n}$ are then referred to as  the {\sl averaged polymer measures}.

% In the simplest case   $V(\w,z_{1,\ell})=\Psi(\w_0,z_{1,\ell})$ depends only on the  $\w$-coordinate at the origin. 

 \begin{example}[$k+1$ dimensional   directed polymer in a random
 environment]  
 Take the  canonical setting:    product space 
 $\Omega=\Gamma^{\Z^d}$  with generic points $\w=(\w_x)_{x\in\Z^d}$ 
 and translations $(T_x\w)_y=\w_{x+y}$.   
Then let    $d=k+1$,  $V(\w)=-\beta \w_0$ with   inverse temperature parameter
$\beta$,   $\range=\{ e_i+e_{k+1}: 1\le i\le k\}$, 
 and the coordinates $\{\w_x\}$   i.i.d.\ under $\P$.  Thus the
 projection of the walk on $\Z^k$ is simple
 random walk, and at every step the walk
 sees a fresh environment.   
\label{dpre} \end{example} 

\begin{example}[Random walk in random environment]\label{ex:RWRE}
RWRE is a Markov chain $X_n$ 
on $\Z^d$ whose transition probabilities are determined by
an environment $\w\in\Omega$.  
Let  $\cP=\{(\rho_z)_{z\in\range} \in[0,1]^\range:\sum_z \rho_z=1\}$ be the set of
probability distributions on $\range$ and  
%  $\cP$ is endowed with  the   topology of weak convergence of probability measures or, equivalently,  with  the restriction of the product topology.
 $p:\Omega\to\cP$ a measurable function with   $p(\w)= (p_z(\w))_{z\in\range}$.   
A transition probability matrix is   defined   
  by 
\[  \pi_{x,y}(\w)= \begin{cases}   p_{y-x}(T_x\w)   &y-x\in\range\\
0 &y-x\notin\range   \end{cases} \qquad  \text{  for    $x,y\in\Z^d$.} \] 
% We call $\w$ and also $(\pi_{x,y}(\w))_{x,y\in\Z^d}$ an environment because it determines the transition  probabilities of a Markov chain.
%We call $\w_x=(\w_{x,z})_{z\in\Z^d}\in\cP$ the environment at point $x\in\Z^d$. 

%The natural ``canonical'' choice for this setting is to take  $\Omega=\cP^{\Z^d}$ with generic points $\w=(\w_x)_{x\in\Z^d}$, translations $(T_x\w)_y=\w_{x+y}$    and $p_z(\w)=(\w_0)_z$.  In this setting, finiteness of $\range$ implies compactness of $\Omega$.  

Given  $\w$ and  $x\in\Z^d$,  $P_x^\w$ is the law of the Markov chain $X_{0,\infty}=(X_n)_{n\ge0}$ on $\Z^d$ with initial point $X_0=x$ and  
transition probabilities $\pi_{y,z}(\w)$. That is,  $P_x^\w$ satisfies 
$P_x^\w\{X_0=x\}=1$ and 
	\[P_x^\w\{X_{n+1}=z\,|\,X_n=y\}=\pi_{y,z}(\w) \ \text{ for all $y,z\in\Z^d.$}\] 
  $P_x^\w$ is called the {\sl quenched} distribution of the walk $X_n$. 
The {\sl averaged}   (or {\sl annealed}) 
distribution    is the path marginal   $P_x(\cdot)=\int P_x^\w(\cdot )\,\P(d\w)$    
of the {\sl joint} distribution  
  $P_x(dx_{0,\infty},d\w)=P_x^\w(dx_{0,\infty})\P(d\w)$.  
%Its marginal on the path space $(\Z^d)^{\Z_+}$ is also denoted by $P_x$ and   called  since $\w$ is averaged out:    \[P_x(\cdot)=\int P_x^\w(\cdot )\,\P(d\w) \quad\text{  for a  measurable $A\subset(\Z^d)^{\Z_+}$.}\]
%We say $\P$ is i.i.d.\ when $\{(\pi_{x,x+z})_{z\in\Z^d}:x\in\Z^d\}$ is an i.i.d.\ collection of random variables.

RWRE is  a special case of \eqref{q-meas} with     $V(\w,z_{1,\ell})=-\log\pi_{0,z_1}(\w)$.
(Note the abuse of  notation: for RWRE $P_0$  is the averaged measure while in  RWRP $P_0$ is  the reference random walk. This should cause no confusion.)

Of particular interest are RWREs where $0$ lies outside  the convex hull
of $\range$.  These are {\sl strictly directed} in the sense that for some $\uhat\in\R^d$,
$z\cdot\uhat>0$ for each admissible step $z\in\range$.   General large deviation theory
for  these walks is covered for the first time in the present paper.  
\end{example}

%We close this section with a remark concerning the connection between the RWRE and RWRP terminology.

%\begin{remark}   The notion of quenched measure is of course the same for RWRE and RWRP.   On the other hand, annealed and averaged measures for RWRP coincide in RWRE.  However, because there is no notion of a path evolving alongside the environment in RWRE,  we choose in this case to use the term averaged rather than annealed.  \end{remark}

\subsection{Results}\label{intro-results}
We have two types of results.   First we prove the $\P$-a.s.\ existence  of  the {\sl quenched  free energy}
	\begin{align}\label{intro:free energy}
	\lim_{n\to\infty}n^{-1}\log Z_{n,0}^{V,\w}=\lim_{n\to\infty} n^{-1} \log E_0\big[e^{-\sum_{k=0}^{n-1}V(T_{X_k}\w,Z_{k+1,k+\ell})}\big] 
	\end{align}
and derive  two variational formulas for the limit.   The   assumption we need
combines  moment bounds on  $V$  with the degree of mixing in $\P$: if $\P$
is merely ergodic we require a bounded $V$,  while with independence 
or exponential mixing 
$L^p$ for $p>d$ is sufficient.   The existence of the limit is not 
entirely new because in some cases  it  
follows from subadditive methods and concentration inequalities. In   
 Example  \ref{dpre}  \cite{Com-Shi-Yos-03} proved the limit under an 
exponential moment assumption  and \cite{Var-07}  with the tail assumption
under which greedy lattice animals are known to have linear growth.  
Our  variational descriptions of the free energy  are new.  
 
The second results are large deviation principles (LDPs) for  
 the quenched  distributions  $Q^{V,\w}_{n,0}\{R_n^{\oneinfty}\in \cdot\,\}$ of the    {\sl empirical process}  
 \[R_n^{\oneinfty}=n^{-1}\sum_{k=0}^{n-1}\delta_{T_{X_k}\w,\,Z_{k+1,\infty}}. \]  
   $T_{X_k}\w$ is the environment   seen from the current position 
 of the walk and   $Z_{k+1,\infty}=(Z_i)_{k+1\le i<\infty}$ is the entire sequence of future steps.    
We assume $\Omega$ separable  metric with  Borel $\sigma$-algebra $\kS$.
Distributions  $Q^{V,\w}_{n,0}\{R_n^{\oneinfty}\in \cdot\,\}$ are   probability measures on  
  $\measures(\Omega\times\range^\N)$,   the space of Borel probability measures on
  $\Omega\times\range^\N$ endowed with the weak topology generated by bounded
  continuous functions.  

The LDP  takes this standard form.   There is a 
 lower semicontinuous  convex rate function
  $\Iqr^V:\measures(\Omega\times\range^\N)\to[0,\infty]$ such that these bounds hold: 
 \begin{equation*}
  \begin{aligned}
	&\varlimsup_{n\to\infty}n^{-1}\log  Q_{n,0}^{V,\w}\{R_n^{\oneinfty}\in C\}
	\le-\inf_{\mu\in C} \Iqr^V(\mu)\ \text{ for all compact sets }C \\
\text{and}	\quad
 &\varliminf_{n\to\infty}n^{-1}\log Q_{n,0}^{V,\w}\{R_n^{\oneinfty}\in O\}
 \ge-\inf_{\mu\in O} \Iqr^V(\mu)\ \text{ for all open sets }O.
\end{aligned}%\label{LDP}
\end{equation*}
 Large deviations of  $R_n^{\oneinfty}$ are  
called   {\sl level 3} or {\sl process level} large deviations.  
For basic   large deviation theory we refer the reader to 
\cite{Dem-Zei-98}, \cite{Hol-00},  
\cite{Deu-Str-89}, \cite{Ras-Sep-10-ldp-}, and \cite{Var-84}. 

 Since we   prove the upper bound only for compact sets the result 
 is technically known as a weak LDP.  In   the   important special
 case of strictly directed walk in an i.i.d.\ environment we   strengthen
 the result to a full LDP where the upper bound is valid for all closed sets.  
 Often  $\Omega$ is compact and then  this issue vanishes.     
As a corollary we obtain large deviations
 for RWRE.  
%The  rate function is then   given by a specific relative entropy; see Lemma 2.2 in \cite{Ras-Sep-10-}.   
 
%The rate function $\Iqr^V$ is given in terms of relative entropy but again, describing it requires further   notation; see Theorem \ref{th:ldp RWRP} and Remark \ref{level 3 RWRP}. 

This paper does   not investigate models that allow 
  $V=\infty$.  
  An example in 
  RWRE would be a walk on a supercritical percolation cluster.

\subsection{Overview of literature and predecessors of this work}
Random walk in random environment was   introduced by Chernov \cite{Che-67-eng} in 1967 and Temkin \cite{Tem-72-eng}
in 1972 as a  model for DNA replication.  
Random walk in random potential appeared  in the work of Huse and Henley \cite{Hus-Hen-85} in 1985
on impurity-induced domain-wall roughening in the two-dimensional Ising model.
The seminal mathematical work on RWRE was Solomon  1975  \cite{Sol-75} 
and on RWRP  Imbrie and Spencer 1988 \cite{Imb-Spe-88} and Bolthausen 1989 \cite{Bol-89}. 
Despite a few decades of effort  many basic questions 
on   (i)  recurrence, transience
 and zero-one laws, (ii) fluctuation 
behavior 
and (iii) large deviations remain only partially answered.  
   Accounts of parts of the state of the art can be found in the lectures 
\cite{Bol-Szn-02-dmv},  \cite{Hav-Ben-02}, \cite{Szn-04} and  \cite{Zei-04}  %\cite{Hau-Keh-87}  
on RWRE, and in  \cite{Com-Shi-Yos-04}, \cite{Hol-09}, \cite{Gia-07},  \cite{Rub-Col-03} %\cite{deG-79},
and \cite{Szn-98} on RWRP.

 Our LDP  Theorem \ref{th:ldp RWRP} specialized to RWRE  covers the quenched 
  level 1  LDPs for RWRE  that have   been established over the last  two
decades.  In the one-dimensional case 
 Greven and den Hollander \cite{Gre-Hol-94} considered the i.i.d.\ nearest-neighbor case, 
 Comets, Gantert, and Zeitouni \cite{Com-Gan-Zei-00} the ergodic   nearest-neighbor case,
 and 
Y\i lmaz \cite{Yil-09-cpam} the ergodic  case with $\range=\{z:|z|\le M\}$ for some $M$.  
In the multidimensional setting  
Zerner \cite{Zer-98-aop}  looked at 
the i.i.d.\ nearest-neighbor nestling case, and Varadhan \cite{Var-03-cpam}  the general ergodic  case with
bounded step size and $\{z:|z|=1\}\subset\range$.  
All these works, with the exception of \cite{Zer-98-aop},  required uniform ellipticity at least on part of $\range$,  
i.e.\ $\pi_{0,z}\ge\kappa$ for a fixed $\kappa>0$ and all $z$ with $|z|=1$.  % and all $z\in\range$. 
\cite{Zer-98-aop} needs $\E[|\log\pi_{0,z}|^d]<\infty$, still for all $|z|=1$. Rosenbluth \cite{Ros-06} gave a variational formula
for the rate function in \cite{Var-03-cpam} under an assumption of 
$p>d$ moments on $\log\pi_{0,z}$, $|z|=1$.

Article  \cite{Ras-Sep-10-}   proved a quenched level 3  LDP 
for RWRE under a  general ergodic environment, subject to bounded steps, 
  $p>d$ moments on $\log\pi_{0,z}$,  and  an  irreducibility assumption
that required the origin   to be accessible from every  $x\in\Z^d$. 
(See Remark \ref{more on loops}  for   more technical explanation of the 
scope of \cite{Ras-Sep-10-}  compared to the present paper.)  
Level 3 large deviations for RWRE have  not appeared in other works.  
  \cite{Yil-09-cpam} gave a 
quenched  univariate  level 2 LDP.  This means that the path component 
in the empirical measure 
has only one step:  $n^{-1}\sum_{k=0}^{n-1}\delta_{T_{X_k}\w,Z_{k+1}}$. 

One goal of the present paper is to eliminate the unsatisfactory 
  irreducibility assumption of \cite{Ras-Sep-10-}.   
This  is important because the  irreducibility assumption
   excluded several  basic and fruitful  models, such as directed polymers,  
   RWRE in a
    space-time, or dynamical, environment
(the case  $\range\subset\{x:x\cdot e_1=1\}$)
  and RWRE with a forbidden direction
  (the case  $\range\subset\{x:x\cdot \uhat\ge0\}$ for some $\uhat\ne0$). 
    Corollary \ref{cor:nestling} shows that 
forbidden direction is  the only case not covered by  \cite{Ras-Sep-10-}, but
 \cite{Ras-Sep-10-} did not address the more general
 polymer model.

Our results   cover the quenched level 1 LDPs for space-time RWRE derived in  
  \cite{Yil-09-aop}  for i.i.d.\ environment in a neighborhood of the asymptotic velocity
  and by 
Avena, den Hollander, and Redig \cite{Ave-Hol-Red-10} for a space-time 
random environment given by a mixing attractive spin flip particle system.
Our results can also be adapted to continuous time to cover the quenched level 1 LDP by Drewitz {\it et al.}\ \cite{Dre-etal-10-} 
for a random walk among a Poisson system of moving traps.

On the RWRP side, Theorems \ref{Lambda=Hstar} and \ref{th:ldp RWRP} cover, respectively, the existence of free energy and the quenched level 1 LDPs
for simple random walk in %$L^p$
random potential proved by Zerner \cite{Zer-98-aap} and the corresponding results for directed simple random walk in 
random potential %with exponential moments
proved by Carmona and Hu \cite{Car-Hu-04} and Comets, Shiga, and Yoshida \cite{Com-Shi-Yos-03}.  (See  \cite{Szn-94} 
for an earlier  continuous counterpart  of \cite{Zer-98-aap}.)  
We also give an entropy interpretation for the rate function and two variational formulas for the free energy,  
while earlier descriptions
of  these objects came in terms of Lyapunov functions and  subadditivity arguments. 
As far as we know,   level 2 or 3 large deviations have not been established
in the past  for RWRP.

%{\bf ** Zerner uses subadditivity to get the Lyapunov functions and the free energy, then uses them to get the qLDP. Comets and also Carmona use subadditivity
%and concentration to get the free energy. Carmona uses subadditivity to get the qLDP (\`a la Varadhan cpam result) **}

%{\bf ** Should we mention that our variational formula has appeared in Kosygina etal 1\&2, Lions and Souganidis, and Schroeder?  
%We do say this in Remark \ref{K-var}. maybe that's where it belongs and here it is kind of out of context? **}

%{\bf ** Should we mention the annealed counterparts, done by Flury for SRWRP (nothing for space-time, other than
%Frank's and Ignatiouk-Robert's) **}

%Turning to the proofs, 
The technical heart of  \cite{Ras-Sep-10-}   was a multidimensional 
extension of a homogenization argument that goes back to Kosygina, Rezakhanlou, and Varadhan \cite{Kos-Rez-Var-06} in the context of diffusion in time-independent 
random potential. This argument was used by Rosenbluth \cite{Ros-06} and Y\i lmaz \cite{Yil-09-cpam} 
to prove  LDPs  for RWRE.  
%It seems reasonable that this version of the method can be made to work for space-time RWRE.
%It is also not quite clear how the method would extend beyond space-time RWRE to the case
%of a forbidden direction, i.e.\ when $\range\subset\{x:x\cdot\uhat\ge0\}$ for some vector $\uhat\in\R^d\smallsetminus\{0\}$.

The main technical  contribution of the current work is a new approach to the homogenization 
argument % in \cite{Kos-Var-08} 
that  allows us to drop the aforementioned irreducibility 
requirement.   One comment to make is
that this construction  that we undertake in Appendix  \ref{app:pf lm class K} 
 does use the invertibility of the transformations $T_z$
assumed in Section \ref{sec-model}.  This is the only place where that
is needed.  

The homogenization method of \cite{Kos-Rez-Var-06} was sharpened by Kosygina
and Varadhan \cite{Kos-Var-08} to handle time-dependent but bounded random potentials.
The results in \cite{Kos-Rez-Var-06} and \cite{Kos-Var-08} concerned  homogenization of stochastic Hamilton-Jacobi-Bellman equations and yielded  variational formulas
for the effective Hamiltonian. For a special case of the random Hamiltonian one can convert these results into quenched large deviations for the velocity of a
diffusion in a random potential, with variational formulas for the quenched free energy.  
Using different methods, \cite{Lio-Sou-05}  and \cite{Arm-Sou-11-} obtain homogenization results similar to \cite{Kos-Rez-Var-06} and \cite{Kos-Var-08}, respectively.
%\cite{Nol-Xin-09} diffusion with random drift, not potential
Furthermore, \cite{Arm-Sou-11-} allows   unbounded potentials and  requires mixing  to compensate for the unboundedness; 
compare with part  (d) of our Lemma \ref{lm-L}.
It is noteworthy that when $d=1$, an ergodic $L^1$ potential is in fact enough; see \cite{E-Weh-Xin-08} and compare with part  (b) of our Lemma \ref{lm-L}.
%  {\bf ** the idea of mentioning these two references is to say that there are other methods out there.. **}
%The recent \cite{Arm-Sou-11-} has results and assumptions somewhat similar to some of the ones in the present manuscript.
%They consider degenerate second and first order Hamilton-Jacobi-Bellman equations,
%%which include as a special case the equation you obtain after a logarithmic transformation from the heat equation with a potential) 
%in stationary ergodic but unbounded environments. Like us they need mixing in order to compensate for the unboundedness. 
%In the particular case of a diffusion with random drift, their condition on the mixing appears to be similar but not exactly the same as the one we assume.
%The methods are again different.

\medskip

We end this section with   some conventions for easy reference. 
%  $\Z_+$, $\Z_-$, and $\N$ denote, respectively, the set of non-negative,  non-positive, and positive integers. For $\range\subset\Z^d$, $\Uset$ denotes its convex hull and $\gr$ denotes the additive group generated by   $\range$.
%  $|\cdot|$ denotes the $\ell^\infty$-norm on $\R^d$.  $\{e_1,\dotsc,e_d\}$ is the canonical basis of $\R^d$. 
For a measurable space $(\X,{\sB})$, %$b\X$ denotes the space of bounded measurable functions.  
$\measures(\X)$  is the space  of probability measures on $\X$ and   $\MC(\X)$    the set  of 
Markov transition kernels on $\X$.  
Given   $\mu\in\measures(\X)$ and  
$q\in\MC(\X)$,
 $\mu\times q$ is the probability measure on 
$\X\times\X$ defined by 
$\mu\times q(A\times B)=\int \one_A(x)q(x,B)\,\mu(dx)$
and $\mu q$ is  its second marginal.
$E^\mu[f]$ denotes  expectation of $f$ under   probability measure $\mu$.
%, occasionally  simplified to  $\mu(f)$.   
  The  increments 
 of   a   path $(x_i)$ in $\Z^d$  
are denoted by $z_i=x_i-x_{i-1}$. 
%The  sequences $(x_i)$ and $(z_i)$  are in 1-1 correspondence. 
Segments of sequences are denoted by     
$z_{i,j}=(z_i,z_{i+1},\dotsc,z_j)$,   also for  $j=\infty$.
%, and also    for   random variables:    $Z_{i,j}=(Z_i,Z_{i+1},\cdots,Z_j)$. $z_{i,i-1}$ is the empty sequence and simply means we are not looking at increments.

\section{Variational representations for free energy}\label{sec:var rep}

Standing assumptions in this section are that Ê$(\Omega,\kS,\P,\{T_z:z\in\gr\})$Ê
is a measurable ergodic dynamical system and,  as throughout the paper,  $\range$ is
an arbitrary finite subset of $\Z^d$ that generates the additive  group $\gr$.
These will not be repeated in the statements of lemmas and theorems.   
Most of the time we also assume that  $\kS$ is countably generated, this will be
mentioned.  The relevant Markov process for this analysis is 
$(T_{X_n}\w, Z_{n+1,n+\ell})$ with state space $\bigom_\ell=\Omega\times\range^\ell$.
The evolution goes via the transformations 
$\Sopr_z(\w,z_{1,\ell})=(T_{z_1}\w, (z_{2,\ell},z))$ 
on $\bigom_\ell$ where the step $z$  is chosen randomly from $\range$ 
as stipulated by the kernel $\refp$.  Elements of $\bigom_\ell$ are 
abbreviated $\wz=(\w,\,z_{1,\ell})$.   

We first look at the  limiting logarithmic moment generating function \eqref{intro:free energy}, also
	called the {\sl pressure} or the {\sl free energy}. 
%	due to  statistical mechanical connections.
To cover much-studied directed polymer models it is  important to go beyond
bounded continuous  potentials.  To achieve this, and at the same time  provide  
a succinct statement of a key hypothesis 
  for Lemma \ref{lm-pressure2}  below, we 
introduce  class $\Ll$ in  the next definition.   
   Let
  \beq D_n=\{z_1+\cdots+z_n\in\Z^d:z_{1,n}\in\range^n\} \label{defDn}\eeq
   denote the set of points accessible from the origin 
  in exactly $n$ steps from $\range$.  
  	\begin{definition}\label{cL-def}
	A function $g:\Omega\to\R$ is in 
	class $\Ll$ if $g\in L^1(\P)$ and for any nonzero $z\in\range$
	\[\varlimsup_{\e\to0}\;\varlimsup_{n\to\infty} \;\max_{x\in \cup_{k=0}^n D_k}\;\frac1n \sum_{0\le i\le\e n} |g\circ T_{x+iz}|=0\quad\P\text{-a.s.}\]
	Similarly,  a function $g$ on $\bigom_\ell$ is a member of $\Ll$ if $g(\cdot\,,z_{1,\ell})\in\Ll$ for each $z_{1,\ell}\in\range^\ell$.
	\end{definition}
	
A bounded $g$ is  in   $\Ll$ under an arbitrary ergodic $\P$, and so is any $g\in L^1(\P)$ if $d=1$. In general  there is a trade-off between the degree of  mixing in $\P$ 
and the moment of $g$  required.    For example, if sufficiently separated  shifts of 
$g$ are i.i.d.\ or there is exponential mixing, then $g\in L^p(\P)$ for some $p>d$  guarantees  $g\in\Ll$.   Under polynomial mixing a higher moment is needed.   
  Lemma \ref{lm-L} in Appendix \ref{app-pressure} collects sufficient 
  conditions for membership in $\Ll$.

We have two variational formulas for the free energy.  One is  
duality in terms of entropy.  The other  involves 
a functional $\unK_\ell(g)$  
 defined by  a minimization over gradient-like  auxiliary functions. 
Class $\K$ below is a generalization of a class of functions previously introduced by \cite{Ros-06}.
%  First we develop $\unK_\ell(g)$.  
  
\begin{definition}
\label{cK-def}
A measurable function $F:\bigom_\ell\times\range\to\R$ is in 
class $\K$  
if it satisfies the following three conditions.
\begin{itemize}
\item[(i)] Integrability: for each $z_{1,\ell}\in\range^\ell$ and $z\in\range$, 
$\E[|F(\w,z_{1,\ell},z)|]<\infty$.
\item[(ii)] Mean zero: for all $n\ge\ell$ and $\{a_i\}_{i=1}^n\in \range^n$ the following holds.
If $\wz_0=(\w, a_{n-\ell+1,n})$ and $\wz_i=\Sopr_{a_i}\wz_{i-1}$ for $i=1,\dotsc, n$, then
\begin{align*}
\E\Big[\sum_{i=0}^{n-1} F(\wz_i,a_{i+1})\Big]=0.
\end{align*}
%This implies that $\E[f(\w,\zzl,\zzl,x)]=0$ for all $\zzl$ and $x$.
In other words, expectation vanishes
whenever the sequence of moves $\Sopr_{a_1},\dotsc,\Sopr_{a_n}$
 takes $(\w, z_{1,\ell})$ to $(T_x\w, z_{1,\ell})$
for all $\w$, for fixed $x$ and $z_{1,\ell}$. 
\item[(iii)] Closed loop: for $\P$-a.e.\ $\w$ and 
any two paths $\{\wz_i\}_{i=0}^n$ and 
$\{\bar\wz_j\}_{j=0}^m$ with 
$\wz_0=\bar\wz_0=(\w,z_{1,\ell})$, $\wz_n=\bar\wz_m$,  
$\wz_i=\Sopr_{a_i}\wz_{i-1}$,
and $\bar\wz_j=\Sopr_{\bar a_j}\bar\wz_{j-1}$, for $i,j>0$ and some 
$\{a_i\}_{i=1}^n\in\range^n$ and $\{\bar a_j\}_{j=1}^m\in\range^m$, 
we have
\begin{align*}
&\sum_{i=0}^{n-1} F(\wz_i,a_{i+1})
=\sum_{j=0}^{m-1} F(\bar\wz_j,\bar a_{j+1}).
\end{align*}
\end{itemize}
\end{definition}

 In case of a loop ($\wz_0=\wz_n$) in  (iii) above  one can take $m=0$ and the 
right-hand side in the  display vanishes.   The simplest members of $\K$ are gradients
$F(\wz,z)=h(\Sopr_z\wz)-h(\wz)$ with bounded measurable
  $h:\bigom_\ell\to\R$.    Lemma \ref{L1 limit} in the appendix shows that 
   $\K$  is the  $L^1(\P)$-closure of   such gradients.  

For $F\in\K$ and $g:\bigom_\ell\to\R$  such that $g(\cdot\,,z_{1,\ell})\in L^1(\P)$ for all $z_{1,\ell}\in\range^\ell$ 
define
	\[K_{\ell,F}(g)=\P\text{-}\esssup_\w\; \max_{z_{1,\ell}} \, \log 
	\sum_{z\in\range} \frac1{|\range|}e^{g(\wz)+F(\wz,z)}\]
and then 
	\begin{align*}%\label{K-def}
	\unK_\ell(g)=\inf_{F\in\K} K_{\ell,F}(g).
	\end{align*}

The reference walk $\refp$ with uniform steps from $\range$ defines 
a Markov kernel 
  $\pell$ on $\bigom_\ell$   by 
  	\begin{align}\label{pell-def}
		&\pell(\wz,\Sopr_z\wz)=\tfrac1{|\range|} \, \text{ for }z\in\range\text{ and }\wz=(\w,z_{1,\ell})\in\bigom_\ell. 
	\end{align}
%The shift transformation $\Sopr_z$ was defined in \eqref{shifts}.
  Let $\mu_0$ denote the $\Omega$-marginal of a measure $\mu\in\measures(\bigom_\ell)$.
  Define an entropy $\ratell$ on $\measures(\bigom_\ell)$
	by
	\begin{align}\label{Helldef}
		\ratell(\mu)=
			\begin{cases}
				\inf\{H(\mu\times q\,|\,\mu\times \pell):q\in\MC(\bigom_\ell)\text{ with }\mu q=\mu\}&\text{if }\mu_0\ll\P,\\
				\infty&\text{otherwise.}
			\end{cases}
	\end{align}
Inside the braces the familiar relative entropy is  
\begin{align*}
	H(\mu\times q\,|\,\mu\times\pell)
	=\int\sum_{z\in\range}q(\wz,\Sopr_z\wz)\,\log\frac{q(\wz,\Sopr_z\wz)}{\pell(\wz,\Sopr_z\wz)}\,\mu(d\wz).
\end{align*}
$\ratell: \measures(\bigom_\ell)\to [0,\infty]$ is convex. (The  argument for this
can be found at the end of Section 4 in \cite{Ras-Sep-10-}.)   
For measurable functions $g$ on $\bigom_\ell$ define 
		\begin{align}\label{Hsharp}
		\ratell^\#(g)=\sup_{\mu\in\measures(\bigom_\ell),\,c>0}\{E^\mu[\min(g,c)]-\ratell(\mu)\}.
		\end{align}
For  $g$   from the space of bounded measurable functions (or bounded continuous functions
if $\Omega$ comes with a metric)   
$\ratell^\#(g)$ is the convex dual of $\ratell$, and then we 
  write  $\ratell^*(g)$.    The constant   $R=\max\{|z|:z\in\range\}$ appears also
  frequently in the results. 
  
For the rest of the section we fix $\ell\ge 0$ and consider measurable functions
$g: \bigom_\ell\to\R$.    		

	\begin{theorem}\label{Lambda=Hstar}
%	 Let $(\Omega,\kS,\P,\{T_z:z\in\gr\})$ be a measurable ergodic system and assume $\range$ is finite and 
	 Assume $\kS$ is countably generated. 
Let  $g\in\Ll$.
	 Then, for $\P$-a.e.\ $\w$ the limit 	
	 		\begin{align*}%\label{pres-limit}
		\pres_\ell(g)=\lim_{n\to\infty}n^{-1}\log E_0\big[e^{\sum_{k=0}^{n-1}g(T_{X_k}\w,\,Z_{k+1,k+\ell})}\big]
		\end{align*}
	exists, is deterministic, and satisfies $ \pres_\ell(g)=\unK_\ell(g)=\ratell^\#(g). $ 
	\end{theorem}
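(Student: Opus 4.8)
The plan is to establish the chain of equalities $\pres_\ell(g)=\unK_\ell(g)=\ratell^\#(g)$ by proving two inequalities that sandwich the actual limit, together with a separate argument that the $\limsup$ and $\liminf$ of $n^{-1}\log E_0[\exp\sum g]$ coincide. I would organize the argument around the quantity $\pres_\ell(g)$ even before knowing it is a genuine limit: set $\overline\pres_\ell(g)=\varlimsup_n n^{-1}\log E_0[\cdots]$ and $\underline\pres_\ell(g)=\varliminf_n n^{-1}\log E_0[\cdots]$, both of which are $\P$-a.s.\ deterministic by ergodicity (a standard shift-invariance argument, using that the quantity is asymptotically insensitive to the starting point via the group action of $\{T_z\}$ and the irreducibility built into $\range$ generating $\gr$). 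The target is the string $\overline\pres_\ell(g)\le\unK_\ell(g)\le\ratell^\#(g)\le\underline\pres_\ell(g)$, which forces all four quantities to agree.

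\emph{Step 1: $\overline\pres_\ell(g)\le\unK_\ell(g)$ (the superadditive/supermartingale bound).} Fix $F\in\K$. The idea is that adding a telescoping-type correction $F$ along the path does not change the partition function much, because of the mean-zero property (ii), while it turns the one-step exponential weight into something dominated by $e^{K_{\ell,F}(g)}$ at every step. Concretely, along a path $X_{0,n}$ with increments $z_{1,n}$, write $\sum_{k=0}^{n-1} g(T_{X_k}\w, z_{k+1,k+\ell}) = \sum_{k=0}^{n-1}\big(g+F\big)(\ztil_k, z_{k+1}) - \sum_{k=0}^{n-1} F(\ztil_k, z_{k+1})$ where $\ztil_k=(T_{X_k}\w, z_{k+1,k+\ell})$. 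The first sum is bounded above, term by term after taking conditional expectations, by $n\,K_{\ell,F}(g)$: one peels off the last step, uses the definition of $K_{\ell,F}$ to bound $E_0[e^{(g+F)(\ztil_{n-1},z_n)}\mid X_{0,n-1}]\le e^{K_{\ell,F}(g)}$ uniformly, and iterates. The leftover sum $\sum F(\ztil_k,z_{k+1})$ is handled by property (ii) and the $\Ll$-type control: after decomposing $F=$ (gradient part) $+$ (genuinely mean-zero part), the gradient telescopes to boundary terms of size $o(n)$, and the mean-zero part has vanishing normalized growth along the accessible set $\cup_{k\le n}D_k$ exactly by the hypothesis $g\in\Ll$ and Lemma~\ref{L1 limit} (which realizes $F$ as an $L^1$-limit of bounded gradients). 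Optimizing over $F\in\K$ gives the bound.

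\emph{Step 2: $\unK_\ell(g)\le\ratell^\#(g)$ (convex-duality / min-max).} Here I would invoke a min-max theorem. One shows that $\unK_\ell(g)=\inf_{F\in\K}\sup_{\mu} \big\{E^\mu[g+F\text{-pairing}] - (\text{entropy-type penalty})\big\}$ and that the roles of $\inf$ and $\sup$ can be exchanged because $K_{\ell,F}(g)$ is convex in $F$, affine considerations apply in $\mu$, and one of the sets is compact (probability measures on $\bigom_\ell$, weak topology) after a suitable truncation $\min(g,c)$ — which is precisely why $\ratell^\#$ is defined with the extra supremum over $c>0$. The key algebraic identity is that for a kernel $q$ with $\mu q=\mu$, the quantity $H(\mu\times q\mid\mu\times\pell)$ equals $\sup_F\{E^{\mu\times q}[\log(q/\pell)] \text{-type expression involving } F\}$, so the essential-sup in $K_{\ell,F}$ pairs off against the entropy $\ratell(\mu)$ exactly. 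The mean-zero constraint (ii) in the definition of $\K$ is dual to the stationarity constraint $\mu q=\mu$ in the definition of $\ratell$; this is the heart of why the two variational formulas match.

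\emph{Step 3: $\ratell^\#(g)\le\underline\pres_\ell(g)$ (the lower bound via a change of measure).} This is the classical large-deviation lower bound argument. Given any $\mu\in\measures(\bigom_\ell)$ with $\mu_0\ll\P$ and any Markov kernel $q$ with $\mu q=\mu$ and $H(\mu\times q\mid\mu\times\pell)<\infty$, run the walk under the tilted kernel $q$ instead of $\pell$. By the ergodic theorem for the $q$-Markov chain, $n^{-1}\sum_{k=0}^{n-1} g(\ztil_k)\to E^\mu[g]$ and $n^{-1}\sum_{k=0}^{n-1}\log\frac{q}{\pell}(\ztil_k,\ztil_{k+1})\to H(\mu\times q\mid\mu\times\pell)$ under the tilted law, so a Jensen / entropy-inequality argument gives $\varliminf_n n^{-1}\log E_0[e^{\sum g}]\ge E^\mu[g]-H(\mu\times q\mid\mu\times\pell)$; truncating $g$ to $\min(g,c)$ makes the integrand bounded so the ergodic theorem applies cleanly, and then taking suprema over $\mu$, $q$, and $c$ yields $\ratell^\#(g)\le\underline\pres_\ell(g)$. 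A technical wrinkle: one must connect the quenched walk started from the origin to the stationary $\mu$-picture, which uses the $\P$-a.s.\ deterministic nature of the limit and a coupling that brings the initial environment distribution to absolute continuity with respect to $\mu_0$; this is where countable generation of $\kS$ is used.

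\emph{Main obstacle.} I expect the genuinely hard part to be Step~1's control of the leftover term $n^{-1}\sum_{k=0}^{n-1} F(\ztil_k, z_{k+1})$ uniformly over all paths, i.e.\ the passage from the abstract mean-zero/closed-loop axioms defining $\K$ to an honest $o(n)$ bound along every trajectory in $\cup_k D_k$. The closed-loop property (iii) lets one define a "potential" $h$ with $F = \nabla h$ along paths, but $h$ need not be bounded or even globally well-defined; making this rigorous is exactly the homogenization-type construction the introduction flags as the paper's main technical contribution (Appendix~\ref{app:pf lm class K}), and it is where the invertibility of the $T_z$ enters. The definition of class $\Ll$ is tailored precisely so that the maximal sum over the accessible cone $\cup_{k\le n} D_k$ of $\e n$-fold shifts of $|g|$ (and hence of the mean-zero piece of $F$) is negligible — so the real work is verifying that the $\Ll$-control transfers from $g$ to the optimizing $F$, via the $L^1$-approximation by bounded gradients in Lemma~\ref{L1 limit} together with a maximal inequality along the lattice directions $z\in\range$.
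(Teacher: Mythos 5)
Your overall skeleton — the chain $\overline\pres_\ell(g)\le\unK_\ell(g)\le\ratell^\#(g)\le\underline\pres_\ell(g)$ with one inequality per lemma — is exactly the paper's plan, and your Steps 2 and 3 are correct in outline (the paper realizes the compactness in Step 2 not via weak compactness of $\measures(\bigom_\ell)$ but via piecewise-constant densities over finite algebras $\kS_k$, which gives a finite-dimensional compact set; and the final truncation from bounded-above $g$ to general $g\in\Ll$ is a genuine separate bootstrap via a Fatou-type lemma for $\unK_\ell$, not just a line in Step 3).

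The genuine gap is in Step 1, and it is conceptual, not merely technical. You propose to control the leftover term $n^{-1}\sum_{k<n} F(\ztil_k,z_{k+1})$ uniformly over all admissible paths by transferring the $\Ll$-control from $g$ to the optimizing $F\in\K$ ``via the $L^1$-approximation by bounded gradients.'' That cannot work: membership of $g$ in $\Ll$ says nothing about $F$, and $F\in\K$ is only assumed to be $L^1(\P)$, which is far too weak to survive the maximum over the exponentially many sites in $\cup_{k\le n}D_k$ that the $\Ll$-condition tames for $g$. Indeed the paper's Remark~\ref{more on loops} stresses that the lemma controlling the $F$-sum needs only one moment, precisely because it does \emph{not} go through any $\Ll$-type maximal estimate. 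The paper's actual route is different in two essential ways: first, Lemma~\ref{lm-pressure2} uses $g\in\Ll$ to decompose the partition function by asymptotic \emph{direction}, reducing the upper bound to paths forced to land on a fixed trajectory $\xhat_n(\xi)$ (the cost of steering is $o(n)$ exactly because $g\in\Ll$); second, on such directional paths the cumulative $F$-sum is a single well-defined quantity (not a maximum over paths, by the closed-loop property), equal to the path integral $f(\w,z_{1,\ell},\zbar_{1,\ell},\xhat_n(\xi))$, and Lemma~\ref{CLASS K} shows this is $o(n)$ a.s.\ by the multidimensional ergodic theorem, not by any bound resembling $\Ll$. So the $\Ll$-hypothesis and the $\K$-structure play disjoint roles: $\Ll$ handles $g$ and the steering, while the gradient/closed-loop structure of $\K$ handles $F$.

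A smaller but also real error: your preamble claims $\overline\pres_\ell$ and $\underline\pres_\ell$ are deterministic ``via the group action of $\{T_z\}$ and the irreducibility built into $\range$ generating $\gr$.'' This irreducibility is exactly what the paper does \emph{not} assume — the whole point (strictly directed walks, forbidden directions) is that the walk may be unable to return toward the origin, so you cannot invoke insensitivity to the starting point directly. In the paper the determinism of the limit is a \emph{consequence} of sandwiching between the two non-random variational quantities, not a preliminary fact.
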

	
%Note that if $g$ is bounded and continuous then $\ratell^\#(g)=\ratell^*(g)$.

 \begin{remark}  The limit  $ \pres_\ell(g)$ satisfies these bounds:
	\begin{align}\label{La-lb}
	\E\Big[\min_{z_{1,\ell}\in\range^\ell} g(\w,z_{1,\ell})\Big]\le\pres_\ell(g)\le
	\varlimsup_{n\to\infty}\;
	\max_{\substack{x_i-x_{i-1}\in\range\\ 1\le i\le n}}
	\; n^{-1}  \sum_{k=0}^{n-1}  \max_{\wt z_{1,\ell}\in\range^\ell} g(T_{x_k}\w,\wt z_{1,\ell}). 
	\end{align}  
The upper bound is nonrandom by invariance.  
The lower bound  comes from  
  ergodicity of the Markov chain $T_{X_n}\w$   (Lemma 4.1 in \cite{Ras-Sep-10-})
 and Jensen's inequality: 
\[n^{-1}\log E_0\big[e^{\sum_{k=0}^{n-1}g(T_{X_k}\w,Z_{k+1,k+\ell})}\big]\ge n^{-1}\sum_{k=0}^{n-1} E_0\Big[\min_{z_{1,\ell}}g(T_{X_k}\w,z_{1,\ell})\Big].\]
If $g$ is unbounded from above and $\range$ allows the walk to revisit
sites then a situation where $\pres_\ell(g)=\infty$ can be easily created. 
%The point of stating the   upper bound is that 
Under some
independence and moment assumptions the limit on the right  in \eqref{La-lb}
is known to be
a.s.\ finite.   
 \end{remark}

\begin{remark}
Suppose $\Omega$ is a product space with i.i.d.\ coordinates $\{\w_x\}$ under
$\P$,   the walk is   strictly directed ($0$ does not lie in the convex hull of
$\range$), and    $g(\cdot\,, z_{1,\ell})$ is a local function on $\Omega$.  
Then the assumption  $\E\abs{g(\cdot\,, z_{1,\ell})}^p<\infty$,  for some $p>d$ 
and all $z_{1,\ell}\in\range^\ell$,   
is sufficient for the above Theorem \ref{Lambda=Hstar} and the finiteness
of the limit $ \pres_\ell(g)$.  That such $g\in\cL$ is proved 
in  Lemma \ref{lm-L} in Appendix \ref{app-pressure}. 
Under this moment bound, finiteness of the upper bound in \eqref{La-lb} follows
from lattice animal bounds  
  \cite{Cox-Gan-Gri-93, Gan-Kes-94, Mar-02}.   \end{remark}

\begin{remark}\label{K-var}
If $\range=\{\pm e_1,\dotsc,\pm e_d\}$ or $\range=\{e_1\pm e_2,\dotsc,e_1\pm e_d\}$ and 
if we take $g$ to be a function of $\w$ only, then $\pres_0(g)=\unK_0(g)=H^\#_{0,\P}(g)$ 
corresponds to a discretization of the 
variational formula for the effective Hamiltonian $\overline H$ of the homogenized stochastic 
Hamilton-Jacobi-Bellman equation considered in \cite{Kos-Rez-Var-06}, \cite{Kos-Var-08}, and \cite{Lio-Sou-05}. 
%See page 1497 of \cite{Kos-Rez-Var-06} and ?? of \cite{Kos-Var-08}.
It is also related to the variational formula for the exponential decay rate of the Green's function of Brownian 
motion in a periodic potential; see (1.1) in \cite{Sch-88}. 
\end{remark}
 
Here is an outline of the proof of  Theorem \ref{Lambda=Hstar}.  
Introduce the empirical measure
	$R_n^{\oneell}=n^{-1}\sum_{k=0}^{n-1}\delta_{T_{X_k}\w,\,Z_{k+1,k+\ell}}$ 
so that  $nR_n^{\oneell}(g)=\sum_{k=0}^{n-1}g(T_{X_k}\w,Z_{k+1,k+\ell})$ gives 
	convenient compact notation for the sum in the exponent.  
Let 
\[  \overline\pres_\ell(g,\w)=\varlimsup_{n\to\infty}n^{-1}\log E_0\Big[e^{n R_n^{\oneell}(g)}\Big]
\quad\text{and}\quad 
 \underline\pres_\ell(g,\w)=\varliminf_{n\to\infty}n^{-1}\log E_0\Big[e^{n R_n^{\oneell}(g)}\Big]. 
\]
 The existence of  $\pres_\ell(g)$ 
and the  variational formulas are
established  through the inequalities 
 	\begin{align}\label{inequalities1}
	\overline\pres_\ell(g)\mathop{\le}^{\rm(i)}\unK_\ell(g)\mathop{\le}^{\rm(ii)} \ratell^\#(g)\mathop{\le}^{\rm(iii)}\underline\pres_\ell(g). 
	\end{align}
 Inequality (\ref{inequalities1}{\color{my-red}.i}) is proved in Lemma \ref{lm-new-upper}.
 This is the only step that requires  
  $g\in\Ll$ 
rather than just   $L^1(\P)$. %see also Lemma \ref{lm-pressure2} for the exact place where this requirement is used. 
Inequality (\ref{inequalities1}{\color{my-red}.ii}) is proved in Lemma \ref{lm:iv}. This is where the main technical
effort of the paper lies, in order to  relax the  irreducibility assumption on $\range$ used in \cite{Ras-Sep-10-}. 
Bound  (\ref{inequalities1}{\color{my-red}.iii}) is proved with 
the usual   change of measure argument.  It follows as a special
case from  Lemma \ref{lower-bound-lemma} below.
The proof of Theorem \ref{Lambda=Hstar}   comes at the end of this section after the  
lemmas.  
To improve the readability of this section    some lemmas
are proved in an appendix   at the end  of the paper.  

\begin{remark}\label{more on loops} 
Suppose  $0$ lies  in the relative interior of the convex hull of $\range$.
Then for every $x\in\gr$ there exists $z_{1,n}\in\range^n$ with $x_n=x$ 
 (Corollary \ref{cor:nestling}).
Under this irreducibility 
the approach of \cite{Ras-Sep-10-} becomes available and can be used
to prove our results under the assumption 
 that   $g(\,\cdot\,,\,z_{1,\ell})\in L^p(\P)$ for some $p>d$ and all $z_{1,\ell}\in\range^\ell$.  
   In this case (\ref{inequalities1}{\color{my-red}.i})  is proved via a slight variation of
 Lemma 5.2 of \cite{Ras-Sep-10-} rather than our Lemma \ref{lm-new-upper}. This  relies crucially on 
 Lemma 5.1 of \cite{Ras-Sep-10-}  which is where $p>d$ moments are required. We replace this with the much weaker Lemma 
 \ref{CLASS K} which only requires one moment, but then we need Lemma \ref{lm-new-upper} which requires $g\in\Ll$. 
 \end{remark}

%Recall the definition \eqref{xhat-def} of $\xhat_n(\zeta)$ for $\zeta\in\Uset$.
%For $\xi\in\Q^d\cap\Uset$ we can write $\xi=\sum_{z\in\range}\alpha_z z$ with rational coefficients $\alpha_z\ge 0$ such that $\sum\alpha_z=1$; 
%see Lemma \ref{lm-convex} in the appendix.
%Let $k\in\N$ be such that $k\alpha_z\in\Z_+$ for all $z\in\range$. Then there is a path
%$\xhat_{0,k}(\xi)$ with steps $\xhat_i(\xi) - \xhat_{i-1}(\xi)\in\range$ and such that $\xhat_k(\xi)=k\xi$. (The path has exactly $k$ steps because $\sum k\alpha_z=k$.)
%Let $\xhat_n(\xi)=mk\xi+\xhat_{n-mk}(\xi)$ for $mk\le n<(m+1)k$.  

%The first  issue in proving Theorem \ref{Lambda=Hstar} is to tackle is the existence of the limiting free energy and its
%variational representations.   

We turn to developing inequalities \eqref{inequalities1}.  
Decomposing the free energy according to 
asymptotic directions $\xi$ turns out useful. 
Let $\Uset$ be the (compact) convex hull of $\range$ in $\R^d$. 
For each  rational  point $\xi\in\Uset$ fix a positive integer  $b(\xi)$ such that 
$b(\xi)\xi\in D_{b(\xi)}$ (recall definition \eqref{defDn} of $D_n$). 
 The existence of    $b(\xi)$ follows from Lemma 
\ref{lm-convex} in Appendix \ref{app-pressure}.    Then fix a path $\{\xhat_n(\xi)\}_{n\in\Z_+}$, starting at $\xhat_0(\xi)=0$, 
with  admissible steps $\xhat_n(\xi)-\xhat_{n-1}(\xi)\in\range$ and such that 
$\xhat_{jb(\xi)}(\xi)=jb(\xi)\xi$ for all $j\in\Z_+$.   Even though stationarity and ergodicity are standing assumptions in this section,
the next lemma actually needs no assumptions on $\P$.
 
\begin{lemma}\label{lm-pressure2}  
%Assume $\P$ is $\{T_z:z\in\gr\}$-invariant. % and $\range$ is finite. %, and $\Omega$ is compact.
Let $g\in\Ll$. Then  for $\P$-a.e.\ $\w$ 
 	\begin{align}\label{pres-a}
	%\begin{split}
	&\varlimsup_{n\to\infty}n^{-1}\log E_0\Big[e^{n R_n^{\oneell}(g)}\Big]%\\
	%&\qquad\qquad
	\le\sup_{\xi\in \Uset\cap\Q^d}\varlimsup_{n\to\infty}n^{-1}\log E_0\Big[e^{n R_n^{\oneell}(g)}\one\{X_n=\xhat_n(\xi)\}\Big].
	%\end{split}
	\end{align}
\end{lemma}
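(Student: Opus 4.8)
\medskip\noindent\emph{Proof strategy.} The plan is first to fix $\omega$ in the full-probability event on which the defining inequality of class $\Ll$ holds for $\bar g:=\max_{z_{1,\ell}\in\range^\ell}|g(\cdot\,,z_{1,\ell})|$ — legitimate since $\Ll$ is closed under finite sums, so $\bar g$ is dominated by a member of $\Ll$, and $\bar g\in L^1(\P)$ — and on which all values $g(T_x\omega,z_{1,\ell})$, $x\in\gr$, $z_{1,\ell}\in\range^\ell$, are finite. Everything afterwards is deterministic in this $\omega$, as the sentence preceding the lemma anticipates. Write $L$ for the left-hand $\varlimsup$ in \eqref{pres-a}; one may assume $L>-\infty$. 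I would split $E_0[e^{nR_n^{\oneell}(g)}]=\sum_{x\in D_n}E_0[e^{nR_n^{\oneell}(g)}\one\{X_n=x\}]$ and use $|D_n|\le(2Rn+1)^d$, whence $n^{-1}\log|D_n|\to0$ and $L=\varlimsup_n\max_{x\in D_n}n^{-1}\log E_0[e^{nR_n^{\oneell}(g)}\one\{X_n=x\}]$. Then choose $n_j\to\infty$ with maximizing endpoints $y_j\in D_{n_j}$ realizing this $\varlimsup$ and, along a further subsequence, $y_j/n_j\to\xi_0$ for some $\xi_0\in\Uset$.

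The core is a path-surgery step. Fix a rational $\xi\in\Uset\cap\Q^d$ close to $\xi_0$, chosen to approach $\xi_0$ from within $\Uset$. I would append to each length-$n_j$ path ending at $y_j$ a single correction of length $\ell_j$ ending at $\xhat_{m_j}(\xi)$, $m_j=n_j+\ell_j$, with: (i) $\ell_j\le\e(\xi)\,n_j$ where $\e(\xi)\to0$ as $\xi\to\xi_0$; and (ii) the correction being a concatenation of at most $|\range|$ straight runs (each step in $\range$). For the construction, write $y_j=n_j\bar\xi_j$ with $\bar\xi_j\in\Uset$, $\bar\xi_j\to\xi_0$; the displacement to realize is $v_j=\xhat_{m_j}(\xi)-y_j=\ell_j\xi+n_j(\xi-\bar\xi_j)+O(b(\xi))$. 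Since $\xi-\bar\xi_j$ is small and points essentially inward at $\xi_0$, the vector $v_j/\ell_j$ lies in $\ri\Uset$ as soon as $\ell_j$ exceeds a fixed multiple of $n_j|\xi-\xi_0|$ (this is where the inward choice of $\xi$ matters when $\xi_0\in\partial\Uset$), and then Lemma~\ref{lm-convex} yields $v_j\in D_{\ell_j}$ for $\ell_j$ in a suitable range; realizing a representation $v_j=\sum_{z\in\range}c_z z$, $\sum_z c_z=\ell_j$, one direction at a time produces the $\le|\range|$-run correction. \emph{I expect this lattice-geometric construction to be the main obstacle} — an elementary but delicate use of Lemma~\ref{lm-convex} and the polytope structure of $\Uset$.

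Finally I would estimate the cost. Replacing $nR_n^{\oneell}(g)$ by $m_jR_{m_j}^{\oneell}(g)$ adds $O(\ell)$ boundary terms, each at most $\max_{x\in\cup_k D_k}\bar g(T_x\omega)=o(n_j)$ (the $i=0$ term of the $\Ll$-bound), together with the terms along the $\le|\range|$ runs, whose absolute total is at most $|\range|\max_{z}\max_{x\in\cup_k D_k}\sum_{0\le i\le\e(\xi)n_j}\bar g(T_{x+iz}\omega)$; by $\bar g\in\Ll$, dividing by $n_j$ and taking $\varlimsup_j$ leaves $\le\phi(\e(\xi))$ with $\phi(\e)\to0$ as $\e\to0$. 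The reference weight picks up $|\range|^{-\ell_j}$, and the $\ell$-tail in $E_0$ is absorbed by taking its maximizing value, as in \cite{Ras-Sep-10-}. Since the surgery is injective on paths, dividing by $m_j$ and letting $j\to\infty$ gives, for each such $\xi$ and with $0\le c_\xi:=\varlimsup_j\ell_j/n_j\le\e(\xi)$,
\[
\varlimsup_{n\to\infty}n^{-1}\log E_0\big[e^{nR_n^{\oneell}(g)}\one\{X_n=\xhat_n(\xi)\}\big]\ \ge\ \frac{L-c_\xi\log|\range|-|\range|\,\phi(\e(\xi))}{1+c_\xi}.
\]
Taking the supremum over $\xi$ and then $\xi\to\xi_0$ (so $c_\xi\to0$ and $\phi(\e(\xi))\to0$) sends the right side to $L$, which is \eqref{pres-a}; if $L=-\infty$ there is nothing to prove.
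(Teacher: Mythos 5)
Your strategy --- pass to a subsequence $n_j$ with maximizing endpoints $y_j\in D_{n_j}$, $y_j/n_j\to\xi_0$, then splice on a short correction to land on $\xhat_\centerdot(\xi)$ for a rational $\xi$ near $\xi_0$ --- is a genuinely different route from the paper's. The paper instead constructs, for \emph{every} $x\in D_n$ and every $n$, an $O(\e n)$-step correction to one of the \emph{finitely many} targets $\xhat_\centerdot(\xi)$ with $\xi\in k^{-1}D_k$, and the whole rounding scheme ($s_z^{(n)}=\ce{k(1-2\e)a_z/n}$, $m_n$, the lcm $b$ of the $b(\xi)$'s) is designed precisely so that the correction is manifestly an admissible lattice path. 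Your version is cleaner to state, but as written it has two concrete gaps, both of which the paper's extra bookkeeping exists to handle.

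First, the lattice realizability claim. You assert that once $v_j/\ell_j\in\ri\Uset$, ``Lemma~\ref{lm-convex} yields $v_j\in D_{\ell_j}$ for $\ell_j$ in a suitable range.'' It does not. Lemma~\ref{lm-convex} gives a rational convex representation $v_j/\ell_j=\sum\alpha_z z$ with $\sum\alpha_z=1$, but the resulting coefficients $\ell_j\alpha_z$ need not be nonnegative integers, and the $\alpha_z$ themselves change with $\ell_j$ (since $v_j=\xhat_{n_j+\ell_j}(\xi)-y_j$ depends on $\ell_j$), so you cannot simply choose $\ell_j$ to clear denominators. Already in $d=1$ with $\range=\{3,5\}$ one has $D_\ell=\{3\ell,3\ell+2,\dots,5\ell\}$: only lattice points of the right parity are reachable, and $v/\ell\in\ri\Uset$ is far from sufficient for $v\in D_\ell$. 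A correct version of your step needs a ``postage-stamp''--type lemma that the reachable set $D_\ell$ eventually fills the correct coset of $\gr$ near $\ell\,\ri\Uset$; the paper sidesteps this entirely by building the correction explicitly as repetitions of steps already present in $x$ plus $\zhat$-steps, and by closing the gap to $\xhat_\centerdot(\xi)$ by repeating the $k$ steps of $k\xi$ exactly $\ell_n b-m_n$ times.

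Second, zero steps. Definition~\ref{cL-def} only controls sums $\sum_{0\le i\le\e n}\bar g(T_{x+iz}\w)$ for \emph{nonzero} $z\in\range$. If $0\in\range$ (which is allowed), your ``at most $|\range|$ straight runs'' could include a run of $\Theta(\ell_j)$ zero steps sitting at one site $x$; its cost is $\Theta(\ell_j)\bar g(T_x\w)$, and the $i=0$ term of the $\Ll$ condition only gives $\bar g(T_x\w)=o(n_j)$, so the run cost is $\e\cdot o(n_j^2)/n_j$ after normalizing --- not $o(1)$. The paper's proof explicitly flags this (``Because Definition~\ref{cL-def} cannot handle zero steps, we need to be careful about them'') and fixes it by interleaving the zero steps among the $\zhat$-steps so that any site on the correction is visited at most a bounded number of times; this is why the construction tracks a \emph{fixed positive fraction $\delta$} of $\zhat$-steps in the correction. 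Your proposal has no analogue of this device, so as stated it breaks whenever $0\in\range$ and $g$ is unbounded.
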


\begin{proof}%[Proof of Lemma \ref{lm-pressure2}]
Fix a small $\e>0$, an integer $k\ge|\range|\e^{-1}$, and a nonzero  $\zhat\in\range$. 
For $x\in D_n$ write $x=\sum_{z\in\range} a_z z$ with $a_z\in\Z_+$ and $\sum_{z\in\range}a_z=n$. 
Let $m_n=\ce{n/(k(1-2\e))}$ and $s_z^{(n)}=\ce{k(1-2\e) a_z/n}$. %\ge a_z/m_n$.
Then 
%	\[(1-\tfrac1{1+\e})n^{-1}a_z-\tfrac1k\le n^{-1}a_z - k^{-1}s_z^{(n)}\le(1-\tfrac1{1+\e})n^{-1}a_z.\]
%This implies that \[0\le 1-k^{-1}\sum_z s^{(n)}_z\le1-\tfrac1{1+\e}<\tfrac\delta{2R}\]
$k^{-1}\sum_z s^{(n)}_z\le1-\e$ and $m_n s^{(n)}_z\ge a_z$ for each $z\in\range$.
% and \[\Big| k^{-1}\sum_{z\in\range} s^{(n)}_z z -x/n\Big|\le R \sum_{z\in\range} |s^{(n)}_z/ k -a_z/n|\le R(1-\tfrac1{1+\e})<\tfrac\delta2.\]
Let 
  \beq \xi(n,x)=k^{-1}\sum_{z\in\range} s^{(n)}_z z+\Big(1-k^{-1}\sum_{z\in\range} s^{(n)}_z\Big)\zhat.\label{xink}\eeq 
 Then, $\xi(n,x)\in k^{-1}D_k$.  With $\e$ fixed small enough 
 and considering $n>k/\e$, 
  % One can now get from $x$ to $m_n k\xi(n,x)$ by taking $(m_n b_z-a_z)$ $z$-steps, for 
% each $z\in\range$, followed by $m_n(k-\sum_z b_z)$ $\zhat$-steps, with a total of $m_n k-n$ steps. 
  we   constructed %a $\xi(n,x)\in k^{-1}D_k$ and 
an admissible path  of $  m_n k-n\le 4n\e$ steps    from $x$ to  
$m_n k\xi(n,x)$.  This path has at least 
$m_n(k-\sum s^{(n)}_z) \ge m_n k\e \ge n\e/(1-2\e) $  $\zhat$-steps.  
Consequently at least a fixed fraction $\delta$ of the steps of the path 
are $\zhat$-steps, for all $x\in D_n$ and all $n$. 

Let $b$ be the least common multiple of the (finitely many) 
integers $\{b(\xi): \xi\in k^{-1}D_k\}$.  Now  we take 
another   bounded number of additional  steps to get from $m_n k\xi(n,x)$ to the path $\xhat_\centerdot(\xi(n,x))$.    
Pick $\ell_n$ such that $(\ell_n-1)b<m_n\le \ell_nb$.   Then by repeating the steps
of $k\xi(n,x)$ 
in \eqref{xink} $\ell_nb-m_n \le b$ times, we go from  $m_n k\xi(n,x)$ 
to $\ell_n kb\xi(n,x)=\xhat_{\ell_nkb}(\xi(n,x))$.   The duration of  this last leg is bounded
independently of $n$ and $x\in D_n$ because  $k$ 
was fixed at the outset  and $b$ is determined by $k$. 
   Thus the total number of steps
from $x\in D_n$ to $\xhat_{\ell_nkb}(\xi(n,x))$  is   
$r_n= \ell_n kb-n\le 5n\e$, for large enough $n$. 
Let    $\bfu(n,x)=(u_1,\dotsc,u_{r_n})$ denote this sequence of steps.  Again
we note that at least a fixed fraction $\delta$ of the $u_i$'s 
are $\zhat$-steps. 

Develop an estimate: 
\begin{align*}%\label{error-computation}
%\begin{split}
&\frac1n\log E_0\Big[e^{n R^{\oneell}_n(g)}\Big] 
= \frac1n\log \sum_{x\in D_n} E_0\Big[e^{n R^{\oneell}_n(g)}, X_n=x\Big]\\
&\le \max_{x\in D_n}
\frac1n\log   E_0\Big[e^{(n-\ell) R^{\oneell}_{n-\ell}(g)}, X_n=x\Big]  + 
\max_{w\in D_{n-\ell}} \max_{y\in \cup_{s=0}^\ell D_s}\frac {\ell} n  \bar g(T_{w+y}\w) + \frac{C\log n}{n}\\
&\le \max_{x\in D_n}
\frac1n\log   E_0\Big[e^{\ell_n bk R^{\oneell}_{\ell_n bk}(g)}, X_{\ell_n bk}=\xhat_{\ell_n bk}(\xi(n,x))\Big]+ \frac{C\log n}{n}\\
&\ 
+\max_{w\in D_{n-\ell}}\max_{y\in \cup_{s=0}^\ell D_s} \frac {2\ell} n  \bar g(T_{w+y}\w)+\max_{x\in D_n} \frac1n \sum_{i=1}^{r_n} \bar g(T_{x+u_1+\cdots+u_i}\w) +\frac{r_n}n\log|\range|.
%\end{split}
\end{align*}
Above, $\bar g(\w)=\max_{z_{1,\ell}\in\range^\ell} |g(\w,z_{1,\ell})|$.
The second-to-last line of the above display is bounded above by 
\[
 \max_{\xi\in k^{-1}D_k}
\frac1n\log   E_0\Big[e^{\ell_n bk R^{\oneell}_{\ell_n bk}(g)}, X_{\ell_n bk}=\xhat_{\ell_n bk}(\xi)\Big]+ \frac{C\log n}{n}
\]
and so its   limsup  is almost surely at most 
\[(1+5\e)\sup_{\xi\in \Uset\cap\Q^d}\varlimsup_{n\to\infty}n^{-1}\log E_0\Big[e^{n R_n^{\oneell}(g)}\one\{X_n=\xhat_n(\xi)\}\Big].\]
The proof of \eqref{pres-a} is complete  once we show that a.s.
\begin{align}\label{to-show}
\begin{split}
&\varlimsup_{\e\to0}\varlimsup_{n\to\infty} \max_{x\in D_n} \frac1n\sum_{i=1}^{r_n} \bar g(T_{x+u_1+\cdots+u_i}\w)=0\\
\text{and }\quad&\varlimsup_{\e\to0}\varlimsup_{n\to\infty} \max_{w\in D_{n-\ell}}\max_{y\in \cup_{s=0}^\ell D_s} \frac 1 n  \bar g(T_{w+y}\w)=0.
\end{split}
\end{align} 

To this end, observe that the ordering of the steps of $\bfu(n,x)$ 
was so far immaterial.  Because Definition \ref{cL-def} cannot handle 
zero steps, we need to be careful about them.   The ratio of 
 zero steps to  $\zhat$-steps is at most 
$t=\ce{\delta^{-1}}$.  We begin 
$\bfu(n,x)$ by alternating  $\zhat$-steps with   blocks of at most $t$ zero steps,  until the $\zhat$ steps and the
zero steps are exhausted.   After that order the remaining
nonzero steps of     $\range$  in any fashion $z_1,z_2,\dotsc$, and 
have  $\bfu(n,x)$  take first all its $z_1$ steps, then all its $z_2$ steps,
and so on.  Since zero steps do not shift $\w$ but simply repeat the same 
$\bar g$-value at most $t$ times,  we get the bound  
	\[\sum_{i=1}^{r_n} \bar g(T_{x+u_1+\cdots+u_i}\w)\le t |\range| \max_{y\in x+\bfu(n,x)}\;\max_{z\in\range\setminus\{0\}}\sum_{i=0}^{r_n} \bar g(T_{y+iz}\w).\] 
By $y\in x+\bfu(n,x)$ we mean $y$ is on the path starting from $x$
and taking steps in $\bfu(n,x)$.   A similar bound develops for the second line of 
\eqref{to-show}, and the limits in \eqref{to-show} follow from membership in $\Ll$.
%Then \eqref{to-show} follows from showing that for each $z\in\range$ and $C>0$ fixed,
%\begin{align*} \varlimsup_{\e\to0}\varlimsup_{n\to\infty} \max_{|x|\le Cn} \frac1n \sum_{i=0}^{\e n} \bar g(T_{x+iz}\w)=0.  \end{align*} 
%This in turn follows from membership in $\Ll$. The lemma is proved.
\end{proof}

The next step is to  show (\ref{inequalities1}{\color{my-red}.i}): for $g\in\Ll$ and $\P$-a.e.\ $\w$, $\overline\pres_\ell(g,\w)\le \unK_\ell(g)$.
The following ergodic property is crucial.  Recall the definition of the
path $\xhat_\cdot(\xi)$  above Lemma \ref{lm-pressure2}.  
For   $\xi\in\Q^d\cap\Uset$ and $z_{1,\ell}$, $\zbar_{1,\ell}\in\range^\ell$ define 
\begin{align*}
 \cA_n(\xi,z_{1,\ell},\zbar_{1,\ell}) = \{  (a_1,\dotsc,a_n)\in\range^n: \,
&z_1+\dotsm+z_\ell+a_1+\dotsm+a_{n-\ell}=\xhat_n(\xi), \\   
&\qquad   \, a_{n-\ell+1,n}=\zbar_{1,\ell}\}.
\end{align*}
This is the set of 
steps $(a_1,\dotsc,a_n)$ 
that take  $\wz_0=(\w,z_{1,\ell})$ to $\wz_n=(T_{\xhat_n(\xi)}\w,\zbar_{1,\ell})$
 via  $\wz_i=\Sopr_{a_i}\wz_{i-1}$, 
  $1\le i\le n$.  

\begin{lemma}
\label{CLASS K}
%Let $(\Omega,\kS,\P,\{T_z:z\in\gr\})$ be a measurable ergodic system and
%assume $\range$ is finite. % and $\Omega$ is compact. 
%Fix $\ell\ge0$. 
Let $F\in\K$. 
Then, for each $\xi\in\Q^d\cap\Uset$ and $z_{1,\ell}$, $\zbar_{1,\ell}\in\range^\ell$, 
\begin{align*}\lim_{n\to\infty}\;
\max_{(a_1,\dotsc,a_n)\in\cA_n(\xi,z_{1,\ell},\zbar_{1,\ell})}  
\Bigl\lvert \,  \frac1n  \sum_{i=0}^{n-1} F(\wz_i,a_{i+1})\Bigr\rvert =0
\quad \text{in $L^1(\P)$ and  for $\P$-a.e. $\w$. }
\end{align*}
\end{lemma}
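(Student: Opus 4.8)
The plan is to reduce the claim to the special case $F(\wz,z) = h(\Sopr_z\wz) - h(\wz)$ for bounded $h$, where the sum telescopes, and then pass to general $F \in \K$ by the $L^1(\P)$-approximation result (Lemma \ref{L1 limit} / Lemma \ref{CLASS K}'s companion). For a gradient $F$ of this form, $\sum_{i=0}^{n-1} F(\wz_i, a_{i+1}) = h(\wz_n) - h(\wz_0) = h(T_{\xhat_n(\xi)}\w, \zbar_{1,\ell}) - h(\w, z_{1,\ell})$, which is independent of the choice of admissible path $(a_1,\dots,a_n) \in \cA_n(\xi, z_{1,\ell}, \zbar_{1,\ell})$; hence the $\max$ over $\cA_n$ is just $|h(T_{\xhat_n(\xi)}\w,\zbar_{1,\ell}) - h(\w,z_{1,\ell})|$. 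Divided by $n$ with $h$ bounded, this is $O(1/n) \to 0$ uniformly in $\w$, settling the gradient case in both the a.s.\ and $L^1(\P)$ senses immediately.

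Next I would handle general $F \in \K$. Given $\delta > 0$, pick a bounded-$h$ gradient $F_\delta(\wz,z) = h(\Sopr_z\wz) - h(\wz)$ with $\sup_{z_{1,\ell}, z} \E|F(\w,z_{1,\ell},z) - F_\delta(\w,z_{1,\ell},z)| < \delta$. Write $G = F - F_\delta$; then $G \in \K$ (since $\K$ is a linear space — closed loops and mean-zero are linear conditions) with small $L^1$ norm. Along any path in $\cA_n$, the contribution of $F_\delta$ is controlled by the gradient case, so it suffices to show
\[
\varlimsup_{n\to\infty} \max_{(a_1,\dots,a_n) \in \cA_n(\xi,z_{1,\ell},\zbar_{1,\ell})} \frac1n \sum_{i=0}^{n-1} |G(\wz_i, a_{i+1})|
\]
is small (in $L^1(\P)$ and a.s.) when $\|G\|_{L^1}$ is small. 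Here is the key structural point: by the closed-loop property (iii), for a \emph{fixed} reference path $\{\wz_i^{\mathrm{ref}}\}$ in $\cA_n$, the partial sums along \emph{any} other path agree at the common endpoint, and more usefully, comparing a path to the reference path one step at a time shows the cocycle $\sum G(\wz_i, a_{i+1})$ depends only on the endpoints. So actually $\sum_{i=0}^{n-1} G(\wz_i, a_{i+1})$ is path-independent for $G \in \K$ — but the sum of \emph{absolute values} is not, so I need the triangle-inequality route, estimating $\frac1n \sum_i |G(\wz_i, a_{i+1})|$ uniformly over $\cA_n$.

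The main obstacle — and where the real work lies — is exactly this uniform-over-paths control of $\frac1n \sum_i |G(\wz_i, a_{i+1})|$: the number of paths in $\cA_n$ grows exponentially, so a naive union bound over the ergodic theorem applied to each path fails. The resolution should mirror the argument in Lemma \ref{lm-pressure2}: reorganize an arbitrary admissible path into canonical runs of a single step $z \in \range$ (blocks $T_{x+iz}\w$), reducing the multi-path maximum to a maximum over starting points $x$ in $\cup_{k \le n} D_k$ and over directions $z$, of sums of the form $\frac1n \sum_{0 \le i \le n} |G(\cdot, z_{1,\ell})| \circ T_{x+iz}$, and then invoke that $G(\cdot, z_{1,\ell}) \in \Ll$ for each $z_{1,\ell}$ (every $L^1$ gradient lies in $\Ll$ since bounded functions do, and $F \in \K \subset L^1$; but to get membership in $\Ll$ for the \emph{error} $G$ I should instead note $\K \subset \Ll$ componentwise — which holds because $\K$ is the $L^1$-closure of bounded gradients and $\Ll$ is $L^1$-closed in the relevant sense, or argue directly). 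Actually the cleanest path: since $F \in \K$, Lemma \ref{L1 limit} gives bounded gradients $F_\delta \to F$ in $L^1$; the gradient piece is exactly zero in the limit by telescoping, and for the $G$-piece I combine the $\Ll$-type equidistribution estimate (exactly as in the proof of Lemma \ref{lm-pressure2}, handling zero steps by the same block-of-$\le t$ trick) with Markov's inequality and Borel–Cantelli along $\delta \to 0$ to conclude. The $L^1(\P)$ convergence then follows from a.s.\ convergence plus uniform integrability, the latter from the a priori bound $\frac1n \sum_i |F(\wz_i, a_{i+1})| \le \max_{x \in \cup_k D_k} \frac1n \sum_{0 \le i \le n} \bar F \circ T_{x+iz}$-type domination with $\bar F \in L^1$.
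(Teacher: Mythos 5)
Your observation that the sum $\sum_{i=0}^{n-1} F(\wz_i,a_{i+1})$ is \emph{path-independent} once $F\in\K$ is exactly the right starting point (this is what the paper's remark after the lemma says: the ``max'' is vacuous). But you then abandon this and reach for the triangle-inequality bound $\frac1n\sum_i |G(\wz_i,a_{i+1})|$, which destroys the cancellations that are the whole content of the statement: even along a single path this quantity converges (when it converges at all) to $\E|G|>0$, not $0$, so you cannot conclude the limit is $0$ that way. Worse, to control it uniformly over paths you then assert $\K\subset\Ll$ componentwise because ``$\K$ is the $L^1$-closure of bounded gradients and $\Ll$ is $L^1$-closed.'' This is false: $\Ll$ is a maximal-inequality class, not an $L^1$-closed one --- that is precisely why Lemma~\ref{lm-L} needs boundedness, $d=1$, or $p>d$ moments plus mixing. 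A general member of $\K$ is only in $L^1$ and has no reason to lie in $\Ll$, so the ``mirror Lemma~\ref{lm-pressure2}'' step is unavailable, and the concluding ``Markov plus Borel--Cantelli along $\delta\to0$'' is too vague to close the gap.

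The missing idea is the one the paper actually uses: once you know the sum is path-independent, you can \emph{choose} the path to be the block path $\xhat_\centerdot(\xi)$ built from period-$b$ straight-line excursions with $\xhat_{mb}(\xi)=mb\xi$. Writing the sum as the path integral $f(\w,z_{1,\ell},\zbar_{1,\ell},\xhat_n(\xi))$ and applying the cocycle identity (Lemma~\ref{f properties}(b)) turns $\frac1n f(\w,\ldots,m_n b\xi)$ into $\frac{1}{m_n b}\sum_{j=0}^{m_n-1} f(T_{jb\xi}\w,\zbar_{1,\ell},\zbar_{1,\ell},b\xi)$ plus an $O(1/n)$ boundary term. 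This is now a genuine ergodic average of a fixed $L^1$ function along a single shift $T_{b\xi}$, so the ergodic theorem applies; and the mean-zero property of $\K$ (Lemma~\ref{f properties}(c)) identifies the a.s.\ limit as $0$, with no approximation by gradients required for the a.s.\ part. Your reduction to gradients is only needed for the $L^1(\P)$ convergence (which is indeed how the paper gets it, via Lemma~\ref{L1 limit}, since the telescoped gradient part is $O(1/n)$ and the error is small in $L^1$ by invariance of $\P$), where no maximal inequality is involved.
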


\begin{remark}
Due to the closed loop property (iii) in Definition \ref{cK-def}, the sum   above  
is independent of   $(a_1,\dotsc,a_n)\in \cA_n(\xi,z_{1,\ell},\zbar_{1,\ell})$.
In other words, there actually is no maximum. 
Also, Lemma \ref{CLASS K} holds regardless of the choices made  
 in the definition of $\xhat_\cdot(\xi)$. 
\end{remark}

We postpone the proof of Lemma \ref{CLASS K}  to Appendix \ref{app:pf lm class K}. 

\begin{lemma}\label{lm-new-upper}
%Let $(\Omega,\kS,\P,\{T_z:z\in\gr\})$ be a measurable ergodic system and assume $\range$ is finite. % and $\Omega$ is compact.
%Fix $\ell\ge0$ and 
Let $g\in\Ll$.
Then $\overline\pres_\ell(g,\w)\le \unK_\ell(g)$ 
for $\P$-a.e.\ $\w$.
\end{lemma}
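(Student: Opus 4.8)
\textbf{Proof plan for Lemma \ref{lm-new-upper}.}
The plan is to combine Lemma \ref{lm-pressure2}, which reduces the limsup of the free energy to a supremum over rational asymptotic directions $\xi\in\Uset\cap\Q^d$ of the directed partition functions $E_0[e^{nR_n^\oneell(g)}\one\{X_n=\xhat_n(\xi)\}]$, with Lemma \ref{CLASS K}, which says that for any fixed $F\in\K$ the Birkhoff-type sums $\frac1n\sum_{i=0}^{n-1}F(\wz_i,a_{i+1})$ along any admissible path from $(\w,z_{1,\ell})$ to $(T_{\xhat_n(\xi)}\w,\zbar_{1,\ell})$ vanish a.s.\ and in $L^1(\P)$. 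So fix $F\in\K$; it suffices to show $\overline\pres_\ell(g,\w)\le K_{\ell,F}(g)$ for $\P$-a.e.\ $\w$, and then take the infimum over $F\in\K$ on the right.

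First I would fix a rational direction $\xi$ and the associated path $\xhat_\cdot(\xi)$, and insert a telescoping-type identity into the exponent. For a path $x_{0,n}$ with steps $a_i=x_i-x_{i-1}$, starting from $\wz_0=(\w,z_{1,\ell})$ with $z_{1,\ell}$ being the first $\ell$ steps, write
\begin{align*}
nR_n^\oneell(g)=\sum_{k=0}^{n-1}g(T_{X_k}\w,Z_{k+1,k+\ell})
=\sum_{k=0}^{n-1}\bigl(g(\wz_k)+F(\wz_k,a_{k+1})\bigr)-\sum_{k=0}^{n-1}F(\wz_k,a_{k+1}),
\end{align*}
where $\wz_k=\Sopr_{a_k}\wz_{k-1}$ (after suitable bookkeeping of the first $\ell$ steps). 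On the event $\{X_n=\xhat_n(\xi)\}$ with the terminal step-window equal to some fixed $\zbar_{1,\ell}$, the last sum is, by the closed-loop property (iii) in Definition \ref{cK-def}, independent of the path and equals the quantity controlled by Lemma \ref{CLASS K}; call its absolute value $n\varepsilon_n(\w)$ with $\varepsilon_n\to0$ a.s.\ (uniformly over the finitely many choices of $z_{1,\ell}$, $\zbar_{1,\ell}$, and $\xi$ ranging over the finite set $k^{-1}D_k$ relevant in Lemma \ref{lm-pressure2}). The remaining sum $\sum_{k=0}^{n-1}(g(\wz_k)+F(\wz_k,a_{k+1}))$ is bounded termwise: summing over the step $a_{k+1}\in\range$ against the kernel weights $1/|\range|$ and using $\log\sum_{z}\frac1{|\range|}e^{g(\wz_k)+F(\wz_k,z)}\le K_{\ell,F}(g)$ $\P$-a.s.\ for every $\wz_k$, a walk-by-walk comparison (peeling off one step at a time, exactly as in the standard proof that the free energy is bounded by such an essential supremum) gives
\begin{align*}
E_0\bigl[e^{nR_n^\oneell(g)}\one\{X_n=\xhat_n(\xi),\,Z_{n+1-\ell,n}=\zbar_{1,\ell}\}\bigr]
\le e^{n K_{\ell,F}(g)}\,e^{n\varepsilon_n(\w)}.
\end{align*}
Taking $n^{-1}\log$, letting $n\to\infty$, then supremizing over $\xi$ (finitely many in the reduction) and over the finitely many $\zbar_{1,\ell}$, and invoking Lemma \ref{lm-pressure2} yields $\overline\pres_\ell(g,\w)\le K_{\ell,F}(g)$ a.s.; the infimum over $F\in\K$ then gives $\overline\pres_\ell(g,\w)\le\unK_\ell(g)$.

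The main obstacle I anticipate is bookkeeping the first $\ell$ steps cleanly: the empirical measure $R_n^\oneell$ records windows $Z_{k+1,k+\ell}$ that extend $\ell$ steps past time $n-1$, while the Markov chain $\wz_k$ on $\bigom_\ell$ carries the step-window, so one must be careful to match $g(T_{X_k}\w,Z_{k+1,k+\ell})$ with $g(\wz_k)$ and to handle the terminal window $Z_{n+1,n+\ell}$ and the initial window correctly (this is why the statement of Lemma \ref{CLASS K} carries both $z_{1,\ell}$ and $\zbar_{1,\ell}$). A second, more minor point is ensuring the error $\varepsilon_n(\w)$ is uniform over the finite family of directions $\xi$ and endpoint windows that appear; since Lemma \ref{CLASS K} is an a.s.\ statement for each fixed such datum and there are finitely many, a single null set suffices. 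Everything else is the routine peeling argument for an upper bound of partition-function type against an essential supremum.
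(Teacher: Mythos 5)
Your proposal is correct and takes essentially the same route as the paper: reduce to directed partition functions via Lemma \ref{lm-pressure2}, control the inserted $F$-sum on $\{X_n=\xhat_n(\xi)\}$ via Lemma \ref{CLASS K}, then peel off one step at a time against the essential supremum $K_{\ell,F}(g)$ and take the infimum over $F\in\K$. (Two trivial slips: the terminal window should be $Z_{n+1,n+\ell}$ rather than $Z_{n+1-\ell,n}$, and the supremum in Lemma \ref{lm-pressure2} is over the countable set $\Uset\cap\Q^d$, not just the finite $k^{-1}D_k$ — but a countable union of null sets is still null, so the argument is unaffected.)
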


\begin{proof}
  By Lemma \ref{lm-pressure2} it is enough to show that  
\[   \varlimsup_{n\to\infty}n^{-1}\log E_0\Big[e^{n R_n^{\oneell}(g)}
\one\{X_n=\xhat_n(\xi)\}\Big]   \le K_{\ell,F}(g)  \quad\text{    $\P$-a.s.} \] for  fixed  
$\xi\in\Q^d\cap\Uset$ and $F\in\K$.
Abbreviate $\wz_k=(T_{X_k}\w,Z_{k+1,k+\ell})$. Fix $\e>0$. Lemma \ref{CLASS K}
implies that for $\P$-a.e.\ $\w$ there exists a finite $c_\e(\w)$ such that for all $n$, on
the event  $\{X_n=\xhat_n(\xi)\}$, 
\[\sum_{k=0}^{n-1}F(\wz_k,Z_{k+\ell+1})\ge-c_\e-n\e.  %,\quad P_0\text{-a.s.}
\]
Therefore, for $\P$-a.e.\ $\w$
{\allowdisplaybreaks
\begin{align*}
		&-n^{-1}c_\e-\e+n^{-1}\log E_0\Big[e^{nR_n^{\oneell}(g)}\one\{X_n=\xhat_n(\xi)\}\Big]\\
		&\le n^{-1}\log E_0\Big[\exp\Big\{{\sum_{k=0}^{n-1}\Big(g(\wz_k)+F(\wz_k,Z_{k+\ell+1})\Big)}\Big\}\one\{X_n=\xhat_n(\xi)\}\Big]\\
		&\le n^{-1}\log E_0\Big[\exp\Big\{{\sum_{k=0}^{n-1}\Big(g(\wz_k)+F(\wz_k,Z_{k+\ell+1})\Big)}\Big\}\Big]\\
		&=n^{-1}\log E_0\Big[\exp\Big\{{\sum_{k=0}^{n-2}\Big(g(\wz_k)+F(\wz_k,Z_{k+\ell+1})\Big)}\Big\}
		E_0\big[e^{g(\wz_0)+F(\wz_0,Z_{\ell+1})}\,\big|\,\wz_{n-1}\big]\Big]\\
		&\le n^{-1} K_{\ell,F}(g) +n^{-1}\log E_0\Big[\exp\Big\{{\sum_{k=0}^{n-2}\Big(g(\wz_k)+F(\wz_k,Z_{k+\ell+1})\Big)}\Big\}\Big]\\
		&\le\cdots\le K_{\ell,F}(g).
\end{align*}}
The claim  follows by taking $n\nearrow\infty$  and then $\e\searrow0$.
\end{proof}

We have shown (\ref{inequalities1}{\color{my-red}.i}) and 
next  in line is   (\ref{inequalities1}{\color{my-red}.ii}).   The following lemma
is the most laborious step in the paper.  

\begin{lemma}\label{lm:iv}   In  addition to ergodicity assume now 
%Let $(\Omega,\kS,\P,\{T_z:z\in\gr\})$ be a measurable ergodic system.
%Assume $\range$ is finite and 
that $\kS$ is countably generated. % and $\Omega$ is compact.
%Fix $\ell\ge0$.
Assume $g(\cdot\,,z_{1,\ell})\in L^1(\P)$ is bounded above.  Then
\begin{align*}
\unK_\ell(g)\le \ratell^\#(g)=\sup_{\mu\in \measures(\bigom_\ell)}\{E^\mu[g]-\ratell(\mu)\}.
\end{align*}
\end{lemma}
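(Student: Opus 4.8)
\medskip
\noindent\textit{Plan of proof.}
Since $g$ is bounded above, $E^\mu[\min(g,c)]\uparrow E^\mu[g]$ as $c\uparrow\infty$ for each $\mu$ by monotone convergence, so $\ratell^\#(g)=\sup_{\mu}\{E^\mu[g]-\ratell(\mu)\}$ and the substance is the inequality $\unK_\ell(g)\le\ratell^\#(g)$. I may assume $\ratell^\#(g)<\infty$ and fix $\lambda>\ratell^\#(g)$; the goal is to produce a \emph{single} $F\in\K$ with $K_{\ell,F}(g)\le\lambda$, i.e.\ a function $F$ obeying the integrability, mean-zero and closed-loop conditions of Definition \ref{cK-def} together with the pointwise bound
\[
\sum_{z\in\range}\tfrac1{|\range|}\,e^{g(\wz)+F(\wz,z)}\le e^{\lambda}
\qquad\text{for }\P\text{-a.e.\ }\w\text{ and every }z_{1,\ell}.
\]
Such an $F$ is a subsolution, at level $\lambda$, of the corrector (``cell'') problem for the transfer operator $(\mathcal T_g\phi)(\wz)=\log\sum_{z\in\range}\frac1{|\range|}e^{g(\wz)+\phi(\Sopr_z\wz)}$, and producing it is the homogenization step: the hypothesis $\lambda>\ratell^\#(g)$ is the input that must push the relevant eigenvalue of $\mathcal T_g$ down to $\lambda$, and that $g$ is bounded above is used directly, to keep $E^\mu[g]$ and the associated entropic bounds finite, so no prior truncation of $g$ is required.

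I would build $F$ by approximation. Using that $\kS$ is countably generated and $\P$ is shift invariant, replace $g$ by conditional expectations $g_N$ onto an increasing sequence of finite, shift-compatible sub-$\sigma$-algebras and $\P$ by a corresponding finitary (for instance periodized) model, on which $\mathcal T_g$ acts as a finite irreducible positive operator. On each approximation the Perron--Frobenius theorem yields an exact corrector $b_N$ with principal eigenvalue $e^{\lambda_N}$, so the gradient $F_N(\wz,z)=b_N(\Sopr_z\wz)-b_N(\wz)$ automatically meets the mean-zero and closed-loop conditions and $K_{\ell,F_N}(g_N)=\lambda_N$; the finite-state Donsker--Varadhan duality then identifies $\lambda_N$ with the finitary analogue of $\ratell^\#$, giving $\lambda_N\le\ratell^\#(g)+o(1)<\lambda$ for large $N$. (A variant avoiding periodization is a discounted corrector $b_\theta(\wz)=\log E_0[\exp\{\sum_{k\ge0}\theta^k g(\wz_k)\}\mid\wz_0=\wz]$ with $\theta\uparrow1$, whose normalization one again controls by the entropy inequality.)

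The passage to the limit $N\to\infty$ is the delicate point, and I expect it to be the main obstacle. Without ellipticity or irreducibility of $\range$ there are no uniform or pointwise bounds on the correctors $b_N$, so a naive limit is impossible; instead one must control the $F_N$ only in an $L^1(\P)$, path-averaged sense -- bounding the sums $\frac1n\sum_{i=0}^{n-1}F_N(\wz_i,a_{i+1})$ along admissible paths by an ergodic theorem along a fixed asymptotic direction of the type of Lemma \ref{CLASS K}, exploiting the invertibility of the translations $T_z$ and the decomposition of the free energy by directions used in Lemma \ref{lm-pressure2}. This produces an $L^1(\P)$-bounded family, a subsequential limit $F$, and -- since $\K$ is exactly the $L^1(\P)$-closure of bounded gradients (Lemma \ref{L1 limit}) -- membership $F\in\K$, while lower semicontinuity preserves $K_{\ell,F}(g)\le\liminf_N\lambda_N\le\lambda$ in the limit, which completes the proof.
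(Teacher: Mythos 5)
Your high-level plan — reduce to finite $\sigma$-algebra approximations, produce a near-optimal ``corrector'' $F_N$, and pass to an $L^1(\P)$ limit in class $\K$ — does have the same shape as the paper's argument, but both the way you propose to produce $F_N$ and, more seriously, the way you propose to pass to the limit are where the actual difficulty lives, and the sketch doesn't resolve either.

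On the production step: the paper never periodizes and never invokes Perron--Frobenius. There is no canonical way to ``periodize'' a general ergodic $\P$ on an abstract $(\Omega,\kS)$, so the ``finitary model'' on which your positive transfer operator would act is not well defined. Instead the paper restricts the supremum defining $\ratell^\#(g)$ to measures in $\cM_1^k$ (those with $\kS_k$-measurable densities), linearizes the constraint $\mu q=\mu$ via a test function $h$, and applies K\"onig's minimax theorem on the compact parameter set $\cM_2^k$ of pairs (density, $\kS_k$-measurable kernel). This yields, directly and without any Perron--Frobenius input, a bounded measurable $h_{k,\e}$ satisfying the pointwise bound \eqref{F-bound}, and the candidate correctors are the conditioned gradients $F^{(i)}_{k,\e}(\wz,z)=\E[h_{k,\e}(\Sopr_z\wz)-h_{k,\e}(\wz)\mid\kS_{k-i}]$.

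On the limit step: you correctly flag it as ``the delicate point,'' but the mechanism you offer is circular and misreads the lemmas you cite. You invoke Lemma \ref{CLASS K} to control path averages of $F_N$, but that lemma applies to $F\in\K$, which is precisely what you have not yet established for the limit. You invoke Lemma \ref{L1 limit} to conclude membership in $\K$, but that lemma proves the converse direction (every $F\in\K$ is an $L^1$-limit of bounded gradients), not that an $L^1$-limit of gradient-like objects is in $\K$. The genuine obstruction the paper has to overcome is that conditioning a gradient on $\kS_{k-i}$ destroys the closed-loop property, and the gradients themselves have no pointwise sign control in the absence of ellipticity. The paper handles this with the family $F^{(i)}_{k,\e}$ indexed also by $i$, splits each into a uniformly integrable part and a vanishing error via Lemma \ref{lm:raghu}, and builds a hierarchical bookkeeping of those errors (Lemma \ref{F-lemma}, itself resting on Lemma \ref{lm:construction}) so that after taking $k\to\infty$ along a subsequence and then $i\to\infty$, the limit $\Fhat_\e$ satisfies the closed-loop property and a mean-zero correction $c(z)$ can be subtracted to land in $\K$. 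Your proposal contains none of this machinery, so the passage from approximate correctors to an actual $F\in\K$ with $K_{\ell,F}(g)\le\lambda$ is a gap, not an omitted routine step.

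One small additional remark: your opening reduction from $\ratell^\#$ to $\sup_\mu\{E^\mu[g]-\ratell(\mu)\}$ via monotone convergence is fine for the equality in the statement; the paper also uses boundedness of $g$ from above, but mainly so that the ``$x\log x\ge -e^{-1}$'' and truncation arguments in the surrounding lemmas close. The heart of the lemma is the inequality $\unK_\ell(g)\le\ratell^\#(g)$, which is where the above issues bite.
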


\begin{proof}%[Proof of Lemma \ref{lm:iv}]
We can assume  $\ratell^\#(g)<\infty$.  The first technical issue is to find 
some compactness to control the supremum on the right.  
Assuming $\Omega$ compact would not be helpful because the problem is the absolute
continuity condition  in the definition of $\ratell(\mu)$.

Fix a sequence of increasing finite algebras $\kS_k$ on $\Omega$ that  
satisfy $T_{\pm z}\kS_{k-1}\subset\kS_k$ for all $k\in\N$ and $z\in\range$, and whose union generates $\kS$.
Let $\measures^k=\measures^k(\bigom_\ell)$ be the set of probability measures $\mu$ on $\bigom_\ell$ for which there exist $\kS_k$-measurable Radon-Nikodym
derivatives  $\phi_{z_{1,\ell}}$  on $\Omega$  (with respect to $\P$)
such that for bounded measurable $G$
\[  \int_{\bigom_\ell} G\,d\mu =  \sum_{z_{1,\ell}\in\range^\ell} \int_\Omega \phi_{z_{1,\ell}}(\w) G(\w, z_{1,\ell}) \, \P(d\w). \] 
Such $\mu$ satisfy $\mu_0\ll\P$ and
so 
\begin{align*}%\label{minimax:page}
	\ratell^\#(g)&=\sup_{\mu:\mu_0\ll\P}\{E^\mu[g]-\ratell(\mu)\}
	\ge\sup_{\mu\in\measures^k}\{E^\mu[g]-\ratell(\mu)\}.
\end{align*}
Abbreviate $A=\ratell^\#(g)$.  
The proof is completed by verifying this statement:  
\beq \text{if $A\ge\sup_{\mu\in\measures^k}\{E^\mu[g]-\ratell(\mu)\}$    
for all $k\ge1$,  then $A\ge\unK_\ell(g)$.} 
\label{mmx-goal}\eeq
 
 Let $\alpha$ denote a generic probability measure  on $\bigom_\ell^2$ with marginals 
$\alpha_1$ and $\alpha_2$  and let $b\bigom_\ell$ denote the space of bounded
measurable functions on $\bigom_\ell$.  
\begin{align}%\label{minimax:page}
	A&\ge\sup_{\mu\in\cM_1^k, \; q: \mu q=\mu}
	\bigl\{E^\mu[g]- H(\mu\times q\,|\,\mu\times\hat p_\ell)\bigr\}\nn\\
		&=\sup_{\alpha :\alpha_1\in\cM_1^{k},\alpha_1=\alpha_2 }\big\{E^{\alpha_1}[g]-H(\alpha\,|\,\alpha_1\times\hat p_\ell)\big\}\nn\\
		%&=\sup_{\alpha:(\alpha_1)_0\ll\P}\inf_h\Big\{E^{\alpha_1}[f]+E^{\alpha_2}[h]-E^{\alpha_1}[h]-H(\alpha\,|\,\alpha_1\times\pr)\Big\}\nn\\
		&= \sup_{ \alpha :\alpha_1\in\cM_1^{k}  }\;\inf_{h\in b\bigom_\ell} \big\{  E^{\alpha_1}[g]+E^{\alpha_2}[h]-E^{\alpha_1}[h]
		-H(\alpha\,\vert\,\alpha_1\times \hat p_\ell)\big\}.\label{mmx-id0}
\end{align}
Let $F$ denote a bounded measurable test function  on $\bigom_\ell^2$.  
$\cM_2^k$ is  the set of probability measures $\alpha$  on $\bigom_\ell^2$
of the form 
\[   \int_{\bigom_\ell^2} F\,d\alpha = \sum_{z\in\range}  \int_{\bigom_\ell}
 \alpha_1(d\wz) \, q(\wz, \Sopr_z\wz) \, F(\wz, \Sopr_z\wz)  \]
 where $\alpha_1\in\cM_1^{k} $ and kernel 
 $q(\wz, \Sopr_z\wz) =q( (\w, z_{1,\ell}), (T_{z_1}\w, z_{2,\ell}z))$ is $\kS_k$-measurable
  as a  function of $\w$ for each fixed $(z_{1,\ell},z)$. 
A measure $\alpha\in\cM_2^k$ is  uniquely represented by a finite sequence 
$(\phi_{i,z_{1,\ell}}, q_{i,z_{1,\ell},z})$ via the identity
\beq   \int_{\bigom_\ell^2} F\,d\alpha = \sum_{i, z_{1,\ell}, z }  
\phi_{i,z_{1,\ell}} q_{i,z_{1,\ell},z}
\int_{A_i}  F( (\w, z_{1,\ell}), (T_{z_1}\w, z_{2,\ell}z))\,\P(d\w)
\label{mmx-id1} \eeq
where $\{A_i\}$ is the finite set of atoms of $\kS_k$ such that $\P(A_i)>0$,   
  $\phi_{i,z_{1,\ell}}$ is the value of $\phi_{z_{1,\ell}}(\w)$ for $\w\in A_i$,  and 
$q_{i,z_{1,\ell},z}$ is the value of   $q(\wz, \Sopr_z\wz)$  for $\w\in A_i$. 
Thus $\cM_2^k$ is in bijective 
correspondence with a   compact  subset of a Euclidean space,  and 
  \eqref{mmx-id1} shows  that via this identification the integral
is continuous in $\alpha$ for any $F$ that is suitably integrable under $\P$. 
Similarly the entropy  
\[ H(\alpha\,\vert\,\alpha_1\times \hat p_\ell)
= \sum_{i, z_{1,\ell}, z }  
\phi_{i,z_{1,\ell}} q_{i,z_{1,\ell},z}   \P(A_i)  \log (\abs{\range}q_{i,z_{1,\ell},z})
\] 
is continuous and convex in $\alpha$.  

Turning our attention back to \eqref{mmx-id0}.
Once we  restrict $\alpha$ to the compact Hausdorff space 
$\cM^k_2$ the expression in braces is 
upper semicontinuous and concave in $\alpha$ and convex in $h$.    
We can apply K\"onig's minimax theorem (\cite{Kas-94} or \cite{Ras-Sep-10-ldp-}), and 
continue as follows:
\begin{align*}
&A\ge  \sup_{ \alpha\in\cM_2^{k}  }\;\inf_{h\in b\bigom_\ell} \big\{  E^{\alpha_1}[g]+E^{\alpha_2}[h]-E^{\alpha_1}[h]
		-H(\alpha\,\vert\,\alpha_1\times \hat p_\ell)\big\}\\
		&= \inf_{h\in b\bigom_\ell} \sup_{ \alpha\in\cM_2^{k}  }  \big\{  E^{\alpha_1}[g]+E^{\alpha_2}[h]-E^{\alpha_1}[h]
		-H(\alpha\,\vert\,\alpha_1\times \hat p_\ell)\big\}\\
&= \inf_{h\in b\bigom_\ell} \sup_{ \alpha\in\cM_2^{k}  }  \;
\sum_{z_{1,\ell}} \int_\Omega \P(d\w) \phi_{z_{1,\ell}}(\w) \\
&\qquad \times \Bigl\{\,  \sum_z q(\wz, \Sopr_z\wz) \bigl(  g(\wz) -h(\wz) 
+h(\Sopr_z\wz) \bigr) 
 -H\bigl(q(\wz, \,\cdot) \,\vert \,  \hat p_\ell(\wz, \,\cdot)\bigr)\Bigr\} \\
&= \inf_{h\in b\bigom_\ell} \sup_{ \alpha\in\cM_2^{k}  }  \;
\sum_{z_{1,\ell}} \int_\Omega \P(d\w) \phi_{z_{1,\ell}}(\w) \\
&\qquad \times \Bigl\{\,   \sum_z q(\wz, \Sopr_z\wz) 
 \E\bigl[  g(\wz) -h(\wz) +h(\Sopr_z\wz) \,\vert\, \kS_k \bigr] 
  -H\bigl(q(\wz, \,\cdot) \,\vert \,  \hat p_\ell(\wz, \,\cdot)\bigr)\Bigr\}\\
&= \inf_{h\in b\bigom_\ell} \sup_{ \mu\in\cM_1^{k}  }  \;
\sum_{z_{1,\ell}} \int_\Omega \P(d\w) \phi_{z_{1,\ell}}(\w)  
 \; \log \sum_z 
\tfrac1{|\range|}e^{\E[ g(\wz)-h(\wz)+h(\Sopr_z\wz)\,|\, \kS_k]}. 
\end{align*}
Above we introduced the densities $\phi_{z_{1,\ell}}(\w)$ and the kernel
$q$ that correspond to $\alpha\in\cM_2^k$,  used $\kS_k$-measurability
to take conditional expectation, and then took supremum over the kernels
$q$ with  the first marginal $\alpha_1=\mu$ fixed.   This supremum is a finite
case of the convex duality of relative entropy:  
\[  \sup_q\Bigl\{ \sum_z  q(z) v(z) - \sum_z q(z) \log \frac{q(z)}{p(z)} \Bigr\}
= \log \sum_x p(x)e^{v(x)}  \]
and  the maximizing  probability is    $q(z) = (\sum_x p(x)e^{v(x)})^{-1} p(z)e^{v(z)}$. 
  In our case  
$v(z)=\E[ g(\wz)-h(\wz)+h(\Sopr_z\wz)\,|\, \kS_k]$ so the maximizing kernel 
is $\kS_k$-measurable in $\w$ and thus admissible under the condition 
$ \alpha\in\cM_2^{k}$.  

Performing the last supremum over $\mu\in\cM_1^{k}$ gives 
 \[A\ge \inf_{h\in b\bigom_\ell} \max_{z_{1,\ell}\in\range^\ell} \; \P\text{-}\esssup_\w \Bigl\{ \log \sum_z 
\tfrac1{|\range|}e^{\E[ g(\wz)-h(\wz)+h(\Sopr_z\wz)\,|\, \kS_k]} \Bigr\}.\]
Consequently  for $\e>0$ and $k\ge1$ there exists a bounded measurable 
function $h_{k,\e}$ on $\bigom_\ell$ such that for all $z_{1,\ell}\in\range^\ell$ and $\P$-a.s.
\begin{align}
\label{F-bound}
A+\log|\range|+\e\ge
 \log\sum_z e^{\E[g(\wz)-h_{k,\e}(\wz)+h_{k,\e}(\Sopr_z\wz)\,|\,\kS_k]}\,.
 \end{align}

For integers $0\le i\le k$ define 
\begin{align}
\label{F}
F^{(i)}_{k,\e}(\wz,z)=\E[h_{k,\e}(\Sopr_z\wz)-h_{k,\e}(\wz)\,|\,\kS_{k-i}].
\end{align} 

We next extract a limit point in $\K$. The proof of the following lemma is given in Appendix \ref{app:pf lm class K}.

\begin{lemma}\label{F-lemma}
%Let $(\Omega,\kS,\P,\{T_z:z\in\gr\})$ be a measurable ergodic system.
%Assume $\range$ is finite and 
Assume $\kS$ is countably generated % and $\Omega$ is compact.
and $A<\infty$. Construct $F_{k,\e}^{(i)}$ as in \eqref{F}.
Fix $\ell\ge0$ and let $g(\w,z_{1,\ell})\in L^1(\P)$ for all $z_{1,\ell}\in\range^\ell$.
Fix $\e>0$. Then, as $k\to\infty$, along a subsequence that works simultaneously  for all $z_{1,\ell}\in\range^\ell$, $z\in\range$, and $i\ge0$, one can 
write \[F_{k,\e}^{(i)}=\Fhat^{(i)}_{k,\e}-R^{(i)}_{k,\e}\] with $\Fhat_{k,\e}^{(i)}(\wz,z)$ converging in weak $L^1(\P)$ %topology
to a limit $\Fhat_\e^{(i)}$ and the error terms $\w\mapsto R^{(i)}_{k,\e}(\w,z_{1,\ell},z)\ge0$ $\kS_{k-i}$-measurable and converging to $0$ $\P$-a.s. 
Furthermore, as $i\to\infty$, $\Fhat_\e^{(i)}$ converges strongly in $L^1(\P)$ to a limit $\Fhat_\e$, \[c(z)=\E[\Fhat_\e(\w,(z,z,\cdots,z),z)]\ge0\] for all $z\in\range$, and
$F_\e(\wz,z)=\Fhat_\e(\wz,z)-c(z_1)$ belongs to class $\K$.
\end{lemma}

\label{convex page}
Fix $i\ge0$ for the moment.   As a uniformly integrable  martingale,
$M_k(\wz)=\E[g(\wz)\,|\,\kS_{k-i}]$
 converges as $k\to\infty$ to   $g(\wz)$, both a.s.\ and  in $L^1(\P)$  
 for all $z_{1,\ell}$ and $z$. 
%Measure theory, Volume 2  By D. H. Fremlin 247C: on any measure space  relatively compact in weak L^1 topo iff unif int

Fix $z_{1,\ell}$ and $z$.
The weak-$L^1(\P)$  closure of the convex hull of $\{M_j+\Fhat^{(i)}_{j,\e}:j\ge k\}$ is equal to its strong
closure (Theorem 3.12 of \cite{Rud-91}) . Since $g(\wz)+\Fhat^{(i)}_\e(\wz,z)$ is in this closure, there exist finite convex combinations  
	\[\Ghat^{(i)}_{k,\e}=\sum_{j\ge k}\alpha_{j,k}(M_j+\Fhat^{(i)}_{j,\e})\]
%with $\{\alpha_{j,k}\in[0,1]:j\ge k\}$ having finite support, $\sum_{j\ge k}\alpha_{j,k}=1$, and 
such that 
\[\E\bigl\lvert g(\wz)+\Fhat^{(i)}_\e(\wz,z)-\Ghat^{(i)}_{k,\e}(\wz,z)\bigr\rvert \le1/k.\]
Along a subsequence (that we again index by $k$) $\Ghat^{(i)}_{k,\e}(\wz,z)$ converges 
$\P$-a.s.\ to  $g(\wz)+\Fhat^{(i)}_\e(\wz,z)$. Consequently also 
	\[G^{(i)}_{k,\e}=\sum_{j\ge k}\alpha_{j,k}(M_j+F^{(i)}_{j,\e})  \; \underset{k\to\infty}\longrightarrow \;
g+\Fhat^{(i)}_\e  \quad \text{$\P$-a.s.} 	\]
Along  a further subsequence this holds simultaneously for all $z_{1,\ell}$ and $z$.  

By \eqref{F-bound} and Jensen's inequality, 
we have for all $z_{1,\ell}\in\range^\ell$ and $\P$-a.s.
\begin{align*}
e^{A+\log|\range|+\e}
&\ge\sum_{z\in\range}\E\Big[e^{\E[g(\wz)-h_{k,\e}(\wz)+h_{k,\e}(\Sopr_z\wz)|\kS_k]}\,\Big|\,\kS_{k-i}\Big]\\
&\ge \sum_{z\in\range}e^{M_k(\wz,z)+F_{k,\e}^{(i)}(\wz,z)}.
\end{align*}
Since this is valid for all $k\ge i$, another application of Jensen's inequality gives
\begin{align*}
e^{A+\log|\range|+\e}\ge \sum_{z\in\range}e^{G_{k,\e}^{(i)}(\wz,z)}.
\end{align*}
Taking $k\to\infty$ implies, for $\P$-a.e.\ $\w$ and all $z_{1,\ell}\in\range^\ell$,
\[A+\e\ge g(\wz)+\log\sum_{z\in\range}\frac1{|\range|}e^{\Fhat^{(i)}_\e(\wz,z)}.\]
Taking $i\to\infty$ implies, for $\P$-a.e.\ $\w$ and all $z_{1,\ell}\in\range^\ell$,
\[A+\e\ge g(\wz)+\log\sum_{z\in\range}\tfrac1{|\range|}e^{\Fhat_\e(\wz,z)}.\]
Since $c(z_1)\ge0$ the above inequality still holds if $\Fhat_\e$ is replaced with $F_\e$.
Thus
\[A+\e\ge \inf_{F\in\K}
\max_{z_{1,\ell}\in\range^\ell}\P\text{-}\esssup_\wz\Big\{g(\wz)+\log\sum_{z\in\range}\tfrac1{|\range|}e^{F(\wz,z)}\Big\}.\]
Taking $\e\to0$ gives  $A\ge\unK_\ell(g)$. \eqref{mmx-goal} is verified 
and thereby the proof of Lemma \ref{lm:iv} is   complete.
\end{proof}

Next for technical purposes a  Fatou-type lemma for $\unK_\ell$.    

\begin{lemma}\label{K-Fatou}
%Let $(\Omega,\kS,\P,\{T_z:z\in\gr\})$ be a measurable ergodic system
%and assume $\range$ is finite. % and $\Omega$ is compact.
%Fix $\ell\ge0$. 
Let 
$g_k(\cdot\,,z_{1,\ell})\underset{k\to\infty}\longrightarrow g(\cdot\,,z_{1,\ell})$  in $L^1(\P)$ for each
 $z_{1,\ell}\in\range^\ell$. 
Then
\begin{align*}%\label{K-Fatou}
\unK_\ell(g)\le\varliminf_{k\to\infty}\unK_\ell(g_k).
\end{align*}
\end{lemma}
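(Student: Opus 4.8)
The plan is to produce a single $F\in\K$ that is ``within $c$'' of optimal for $g$, where $c:=\varliminf_k\unK_\ell(g_k)$, by a weak-compactness-and-convexification argument applied to near-minimizers for the $g_k$'s. If $c=\infty$ there is nothing to prove, so I pass to a subsequence (not relabeled) along which $\unK_\ell(g_k)\to c<\infty$ and then to a further subsequence along which $g_k(\cdot\,,z_{1,\ell})\to g(\cdot\,,z_{1,\ell})$ $\P$-a.s.\ for each of the finitely many $z_{1,\ell}\in\range^\ell$. For each $k$ I pick $F_k\in\K$ with $c_k:=K_{\ell,F_k}(g_k)\le\unK_\ell(g_k)+1/k$, so $c_k\to c$. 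It then suffices to exhibit $F\in\K$ with $K_{\ell,F}(g)\le c$, since that gives $\unK_\ell(g)\le K_{\ell,F}(g)\le c$.

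Next I would record the a priori bound coming directly from the definition of $K_{\ell,F_k}$: for $\P$-a.e.\ $\w$ and all $z_{1,\ell},z$,
\[
\sum_{z'\in\range}\tfrac1{|\range|}e^{g_k(\wz)+F_k(\wz,z')}\le e^{c_k},
\qquad\text{so in particular}\qquad
F_k(\wz,z)\le\log|\range|+c_k-g_k(\wz).
\]
Since $g_k\to g$ in $L^1(\P)$, the family $\{g_k\}$ is uniformly integrable, and hence so is $\{F_k^+(\cdot\,,z_{1,\ell},z)\}_k$ for each fixed $(z_{1,\ell},z)$, being dominated by the uniformly integrable family $\{\text{const}+|g_k|\}$. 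For the negative parts I would use the structure of class $\K$: the mean-zero property (ii), applied along a short return path of moves and combined with the pointwise bound above and the translation invariance of $\P$, yields $\E[F_k(\cdot\,,z_{1,\ell},z)]\ge -C$ uniformly in $k$; and the closed-loop property (iii), applied along a loop through a prescribed edge, bounds $F_k^-(\wz,z)$ pointwise by a constant plus a finite sum of values $|g_k|$ evaluated at translates of $\w$, so uniform integrability of $\{|g_k|\}$ and invariance of $\P$ give uniform integrability of $\{F_k^-(\cdot\,,z_{1,\ell},z)\}_k$. Thus $\{F_k(\cdot\,,z_{1,\ell},z)\}_k$ is uniformly integrable for each $(z_{1,\ell},z)$. \emph{This is the main obstacle}; note in particular that in the strictly directed case class $\K$ has no nontrivial closed loops, so the bound on $F_k^-$ must there be obtained by a separate argument based on the a priori bound and property (ii) alone.

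With uniform integrability in hand, by the Dunford--Pettis theorem and a further passage to a subsequence, $F_k(\cdot\,,z_{1,\ell},z)\to\Fhat(\cdot\,,z_{1,\ell},z)$ weakly in $L^1(\P)$, simultaneously for all $(z_{1,\ell},z)$. By Lemma \ref{L1 limit}, $\K$ is the $L^1(\P)$-closure of the linear space of gradients $h(\Sopr_z\wz)-h(\wz)$, hence convex and $L^1(\P)$-closed; Mazur's theorem (Theorem 3.12 of \cite{Rud-91}) then supplies finite convex combinations $\sum_{j\ge k}\alpha_{j,k}F_j\in\K$ converging to $\Fhat$ strongly in $L^1(\P)$, so $\Fhat\in\K$. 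For the same combinations $\sum_{j\ge k}\alpha_{j,k}g_j\to g$ strongly in $L^1(\P)$, since $\big\|\sum_{j\ge k}\alpha_{j,k}g_j-g\big\|_{L^1(\P)}\le\max_{j\ge k}\|g_j-g\|_{L^1(\P)}\to0$. Passing to a further subsequence, $G_k:=\sum_{j\ge k}\alpha_{j,k}(g_j+F_j)\to g+\Fhat$ $\P$-a.s., simultaneously for all $(z_{1,\ell},z)$. Finally, by convexity of $t\mapsto e^t$ and the a priori bound, for $\P$-a.e.\ $\w$ and all $z_{1,\ell}$,
\[
\sum_{z\in\range}\tfrac1{|\range|}e^{G_k(\wz,z)}
\le\sum_{j\ge k}\alpha_{j,k}\sum_{z\in\range}\tfrac1{|\range|}e^{g_j(\wz)+F_j(\wz,z)}
\le\sup_{j\ge k}e^{c_j};
\]
letting $k\to\infty$ and using $G_k\to g+\Fhat$ $\P$-a.s.\ gives $\sum_{z\in\range}\tfrac1{|\range|}e^{g(\wz)+\Fhat(\wz,z)}\le e^c$ for $\P$-a.e.\ $\w$ and all $z_{1,\ell}$, i.e.\ $K_{\ell,\Fhat}(g)\le c$. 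Hence $\unK_\ell(g)\le K_{\ell,\Fhat}(g)\le c=\varliminf_k\unK_\ell(g_k)$, as claimed.
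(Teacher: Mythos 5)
Your overall strategy — choose near-minimizers $F_k$, extract a weak $L^1(\P)$ limit, convexify via Mazur to get a.s.\ convergence, and pass to the limit in the defining inequality — is the same convexification scheme as in the paper. But there is a genuine gap at the decisive step: you claim uniform integrability of $\{F_k^-(\cdot\,,z_{1,\ell},z)\}_k$, and your argument for it rests on the closed-loop property to bound $F_k^-$ pointwise by a finite sum of shifted copies of $F_k^+$. As you note yourself, that pointwise bound is simply unavailable when $0\notin\Uset$, since then $\K$ admits no nontrivial loops (Corollary \ref{cor:loops}). This is not a side case that a ``separate argument based on the a priori bound and property (ii) alone'' can rescue: the a priori bound controls $F_k^+$ and the mean-zero property (ii) controls only $\E[F_k^-]$, which gives a uniform $L^1$-bound on $F_k^-$ but not uniform integrability. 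Nothing in the hypotheses rules out $F_k^-$ concentrating mass on a vanishing set, and once uniform integrability fails so does your application of Dunford–Pettis.

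The paper's proof avoids this by not asking for uniform integrability of $F_k^-$ at all. It only shows $\E[F_k^-]$ is uniformly bounded (exactly the argument you would carry out with property (ii)), and then invokes Lemma \ref{lm:raghu} (Lemma 4.3 of \cite{Kos-Var-08}) to split $F_k^- = \Ftil_k + R_k$ with $\Ftil_k$ uniformly integrable and $R_k \ge 0$ tending to $0$ in probability. The weak $L^1$ compactness then applies to $\Fhat_k = F_k^+ - \Ftil_k$, and the error term $R_k$ (which drops off a.s.\ along a subsequence, and has the right sign) is tracked through the convex combinations. This extra bookkeeping is the reason the paper ultimately has to correct the limit by a constant $c(z_1)$ to restore the mean-zero property — a step that your argument dispenses with because, if your uniform integrability claim were true, the mean-zero property would indeed survive the $L^1$ limit automatically. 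Concretely: to repair your proof you would replace the uniform integrability claim for $F_k^-$ with the bound $\sup_k \E[F_k^-]<\infty$, invoke Lemma \ref{lm:raghu}, and then mimic the error-term accounting (and the final $c(z_1)$-correction to recover membership in $\K$) from the proof of Lemma \ref{F-lemma}.
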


\begin{proof}%[Proof of Lemma \ref{K-Fatou}]
We can assume $\varliminf_{k\to\infty}\unK_\ell(g_k)=A<\infty$.
Fix $\e>0$.
There exists a subsequence, denoted again by $g_k$, such that $\unK_\ell(g_k)<A+\e$ for all $k$.
Pick $F_k\in\K$ such that 
	\beq
g_k(\wz)+\log\sum_{z\in\range} \tfrac1{|\range|} e^{F_k(\wz,z)}<A+\e\label{temp-8}\eeq
for all $k$, $z_{1,\ell}\in\range^\ell$, and $\P$-a.e.\ $\w$. 
Out of this we can produce 
an $F\in\K$ such that
\beq
	g(\wz)+\log\sum_{z\in\range} \tfrac1{|\range|} e^{F(\wz,z)}\le A+\e.
\label{temp-9}\eeq 
This implies  $\unK_\ell(g)\le A+\e$ and taking $\e\to0$ finishes the proof.

The construction of $F$ is a simplified version of the argument  to 
realize a limit point  in $\K$ in the proof of  Lemma  \ref{lm:iv}. 
We sketch  the steps.  The reader who aims to master the proof may find it
useful to fill in the details.  
 
For each $k$, $z_{1,\ell}$, and $z$,
	\[F_k(\wz,z)\le A+\e - g_k(\wz) + \log|\range|.\]
Thus $F_k^+$ is uniformly integrable. 
Controlling $F_k^-$ is indirect.  Set 
   $\wz_0=(\w,z_{1,\ell})$, $z_0=z$, $a_i=z_{i-1}$, and $\wz_i=\Sopr_{a_i}\wz_{i-1}$ for $i=1,\dotsc,\ell+1$.  By the    mean-zero property of $F_k$ (part (ii) in Definition \ref{cK-def}), 
 \[\E[F^-_k(\wz,z)]\le\sum_{i=0}^{\ell} \E[F_k^-(\wz_i,a_{i+1})]=\sum_{i=0}^{\ell} \E[F_k^+(\wz_i,a_{i+1})]\]
and so $\E[F_k^-]$ 
is bounded uniformly in $k$. Apply Lemma \ref{lm:raghu} to write $F_k^-=\Ftil_k+R_k$ such that along a subsequence $\Ftil_k$ is uniformly integrable and $R_k\ge0$ converges to $0$ in $\P$-probability, for each $z_{1,\ell}$.
Along a further subsequence $\Fhat_k\equiv F^+_k-\Ftil_k$ converges weakly in $L^1(\P)$ to a limit $\Fhat$ and the limits  $R_k\to 0$ and $g_k\to g$ hold almost surely. 

In \eqref{temp-8} write 
  $F_k=\Fhat_k-R_k$.   As done in the proof of  Lemma  \ref{lm:iv} take almost
  surely convergent 
  convex combinations of $\Fhat_k$, $R_k$ and $g_k$ and substitute these into
\eqref{temp-8}.   Taking the limit now yields \eqref{temp-9}  but with $\Fhat$ in place
of $F$.  
 
Almost sure convergence of convex combinations ensures  
  that
$\Fhat$ satisfies the closed-loop property.  But  it may fail the mean-zero property. To remedy this, 
let $c(z)=\E[\Fhat(\w,(z,z,\dotsc,z),z)]$.  By the weak convergence $c(z)$ is a 
  limit of $\E[\Fhat_k(\w,(z,z,\dotsc,z),z)]$, which is nonnegative due to $R_k\ge 0$ 
and  the mean-zero property for $F_k$.  Since   $c(z)\ge0$,
  \eqref{temp-9}  holds with $F(\wz,z)=\Fhat(\wz,z)-c(z_1)$.  
That $F$ satisfies both the mean-zero and the closed-loop property is verified
with the argument given between equations \eqref{temp-21} and \eqref{temp-22}
in Appendix \ref{app:pf lm class K}.  The point
is that the closed-loop property of $\Fhat$ allows us to define the path integral
$\fhat$ which is used in that argument.   This verifies that
$F\in\K$ and completes the proof.  
 \end{proof}

Next a large deviation lower bound lemma that gives us   (\ref{inequalities1}{\color{my-red}.iii}) and  serves again to prove  Theorem \ref{th:ldp RWRP} below.
 
\begin{lemma}\label{lower-bound-lemma}
%Let $(\Omega,\kS,\P,\{T_z:z\in\gr\})$ be a measurable ergodic system and 
%assume $\range$ is finite. % and $\Omega$ is compact.   Fix $\ell\ge0$ and 
Let   $g(\cdot\, ,z_{1,\ell})\in L^1(\P)$ be  bounded above. 
Then for $\P$-a.e.\ $\w$ 
  \begin{align}\label{Lambda>H*-1}
  \varliminf_{n\to\infty}n^{-1}\log E_0\Big[e^{n R_n^{\oneell}(g)}\Big]
  \ge \sup_\mu\{E^\mu[g]-\ratell(\mu)\}.
  \end{align}
Assume additionally that $\Omega$ is a separable  metric space.
Then for $\P$-a.e.\ $\w$ this lower bound holds
for all   open   $O\subset\measures(\bigom_\ell)$:  
  \begin{align}\label{Lambda>H*-2}
  \varliminf_{n\to\infty}n^{-1}\log E_0\Big[e^{n R_n^{\oneell}(g)}\one\{R_n^{\oneell}\in O\}\Big]
  \ge -\inf_{\mu\in O}\{\ratell(\mu)-E^\mu[g]\}.
  \end{align}
  %insert \one\{R_n\in B\}, change measure, use Jensen, use the ergodic theorem, then a convexity 
  %argument to remove positivity of kernel.
\end{lemma}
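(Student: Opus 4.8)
The plan is to prove \eqref{Lambda>H*-1} first as the special case $O=\measures(\bigom_\ell)$ of a more flexible statement, and then to localize the argument to obtain \eqref{Lambda>H*-2}. Since $\ratell(\mu)=\infty$ unless $\mu_0\ll\P$, I only need to consider such $\mu$, and then only those $\mu$ for which the infimum defining $\ratell(\mu)$ is attained (or nearly attained) by some kernel $q\in\MC(\bigom_\ell)$ with $\mu q=\mu$ and $H(\mu\times q\,|\,\mu\times\pell)<\infty$; a standard density/approximation step reduces to such $\mu$. Fix one such pair $(\mu,q)$. The kernel $q$ is a perturbation of $\pell$ absolutely continuous with respect to it, so it is supported on the moves $\wz\mapsto\Sopr_z\wz$, $z\in\range$; write $q(\wz,\Sopr_z\wz)=q_z(\wz)$. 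The idea is the usual change-of-measure / tilting argument: run the walk under the Markov chain driven by $q$ started from a $\mu$-typical state, use ergodicity of that chain under $\mu$ (which holds because $\mu q=\mu$ and $\mu$ is, after an ergodic decomposition, ergodic for $q$) together with Birkhoff's theorem to show that along this tilted walk $R_n^{\oneell}\to\mu$ and $nR_n^{\oneell}(g)\approx nE^\mu[g]$, while the Radon--Nikodym cost of the change of measure is $\approx nH(\mu\times q\,|\,\mu\times\pell)$. This yields $\varliminf_n n^{-1}\log E_0[e^{nR_n^{\oneell}(g)}]\ge E^\mu[g]-H(\mu\times q\,|\,\mu\times\pell)$, and taking the sup over $(\mu,q)$ gives \eqref{Lambda>H*-1}.

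The one genuine subtlety, and the step I expect to be the main obstacle, is the passage from ``started at a $\mu$-typical state'' to ``started at the origin under $E_0$'', because $E_0$ integrates only over walk paths while $\omega$ is fixed and quenched. I would handle this exactly as in the predecessor paper \cite{Ras-Sep-10-}: since $g(\cdot,z_{1,\ell})$ is bounded above, the exponential moment is automatically controlled from above, and one localizes in space. Concretely, one shows that for $\P$-a.e.\ $\w$ and for every $\e>0$ there is a point $x$ (depending on $\w$, reachable from $0$ in a number of steps that is $o(n)$, hence with negligible entropy/energy cost to walk there from $0$) such that $T_x\w$ is ``$\e$-good'' for the ergodic averages associated with $(\mu,q,g)$; this uses the ergodic theorem for the environment process and a Borel--Cantelli / covering argument to upgrade the a.s.\ statement to one that is uniform enough to be caught by a walk of sublinear length. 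Then restrict $E_0$ to paths that first travel to $x$ and thereafter follow $q$-typical behavior; the contribution of this single family of paths already realizes the lower bound. Boundedness of $g$ from above is what keeps the ``travel to $x$'' segment from costing anything at the exponential scale and also what lets us drop the truncation $\min(g,c)$ freely.

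For \eqref{Lambda>H*-2} the modification is routine: fix $\mu\in O$ with $\ratell(\mu)<\infty$, pick $q$ nearly optimal, and choose a weakly open neighborhood $O'\ni\mu$ with $\overline{O'}\subset O$. The same tilted-walk construction, run along the $q$-chain, forces $R_n^{\oneell}\in O'$ eventually (weak convergence $R_n^{\oneell}\to\mu$), so restricting to the event $\{R_n^{\oneell}\in O'\}$ costs nothing, and we get
\begin{align*}
\varliminf_{n\to\infty}n^{-1}\log E_0\Big[e^{n R_n^{\oneell}(g)}\one\{R_n^{\oneell}\in O\}\Big]\ge E^\mu[g]-\ratell(\mu).
\end{align*}
Optimizing over $\mu\in O$ yields \eqref{Lambda>H*-2}. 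The separable-metric hypothesis on $\Omega$ is used only here, to make the weak topology on $\measures(\bigom_\ell)$ well-behaved (so that ``$R_n^{\oneell}\to\mu$ weakly'' makes sense and open neighborhoods can be chosen as needed). I would present the details of the ergodic/localization step carefully and refer to \cite{Ras-Sep-10-} for the parts that are verbatim, since that is where the only real work lies; the rest is the classical lower-bound-via-tilting scheme.
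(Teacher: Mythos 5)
Your high-level scheme (tilt by a kernel $q$ stationary for $\mu$, use ergodicity plus the change-of-measure identity to pick up $E^\mu[g]-H(\mu\times q\,|\,\mu\times\pell)$) is the standard one and matches the paper. But the step you flag as ``the one genuine subtlety'' is handled in the paper by a much simpler device than the spatial-localization you propose, and the localization route as you describe it does not actually go through under the hypotheses of this lemma.

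The key observation in the paper is that the state of the relevant Markov chain is $\wz=(\w,z_{1,\ell})\in\bigom_\ell=\Omega\times\range^\ell$, not a point of $\Z^d$. Under $P_0$ the first $\ell$ increments $Z_{1,\ell}$ are uniform on $\range^\ell$, so conditioning on $Z_{1,\ell}=z_{1,\ell}$ only costs the constant factor $|\range|^{-\ell}$, which vanishes at exponential scale. Hence for every fixed $z_{1,\ell}$,
\begin{align*}
E_0\big[e^{nR_n^{\oneell}(g)}\one\{R_n^{\oneell}\in O\}\big]\;\ge\;|\range|^{-\ell}\,E_\wz\big[e^{nL_n(g)}\one\{L_n\in O\}\big],\qquad \wz=(\w,z_{1,\ell}),
\end{align*}
with no travel needed. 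The supremum on the right of \eqref{Lambda>H*-1} can be restricted (by a convexity argument, which does use $g\in L^1$) to measures $\mu$ with $\mu_0\ll\P$ that admit a stationary kernel $q$ with $q(\wz,\Sopr_z\wz)>0$ for all $z$; such $\mu$ automatically satisfy $\mu_0\sim\P$, and then the $q$-chain started from $\wz$ is ergodic (Lemma 4.1 of \cite{Ras-Sep-10-}). The change-of-measure inequality therefore yields the lower bound for $\mu_0$-a.e.\ $\w$, and $\mu_0\sim\P$ immediately upgrades this to $\P$-a.e.\ $\w$. That equivalence $\mu_0\sim\P$ is the whole point; there is nothing to ``walk to.''

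Your proposed localization has a concrete problem. You claim that boundedness of $g$ from above ``keeps the travel-to-$x$ segment from costing anything at the exponential scale.'' That controls $e^{\sum g}$ from \emph{above}, which is the wrong direction for a lower bound: along the travel segment the sum $\sum g(T_{y_k}\w,\cdot)$ can be very negative, since $g$ is only $L^1$, not bounded below. If the travel length is $o(n)$ but unbounded, controlling that negative contribution would require exactly the kind of $\cL$-class uniformity the paper imposes for the \emph{upper} bound (Lemmas \ref{lm-pressure2} and \ref{lm-new-upper}), not the mere $L^1$ and bounded-above assumptions available here. In short, you have imported the upper-bound localization machinery into the lower bound, where it is both unnecessary and unjustified. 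The ergodic-decomposition remark is also a looser substitute for the paper's explicit reduction to nice $(\mu,q)$, but that part could be made rigorous; the travel-segment issue is the real gap.

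For \eqref{Lambda>H*-2} your reduction to a neighborhood $O'$ with $R_n^{\oneell}\to\mu$ weakly is fine and matches the paper, once the base argument is fixed as above; the separable-metric hypothesis indeed only enters to make the weak topology manageable.
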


\begin{proof}%[Proof of Lemma \ref{lower-bound-lemma}]
This proof proceeds along the familiar lines of Markov chain lower bound arguments 
and we refer to  Section 4 of \cite{Ras-Sep-10-} for some further details.

Switch to the $\bigom_\ell$-valued  Markov chain 
$\wz_k=(T_{X_k}\w,Z_{k+1,k+\ell})$ with  transition kernel $\pell$ defined in \eqref{pell-def}.
Then   $R_n^{\oneell}$ is the 
 position level   empirical measure
	$L_n=n^{-1}\sum_{k=0}^{n-1}\delta_{\wz_k}. $  
%We can treat $R_n^{\oneell}$   as the position  level (level 2) empirical measure 
%  of a %Feller-continuous  Markov chain.  
Denote by $P_\wz$  (with expectation $E_\wz$)  the distribution
 of the Markov chain $(\wz_k)_{k\ge0}$  
with initial state $\wz$.
Starting at $\wz=(\w, z_{1,\ell}) $ is the same as conditioning   our original process
on $Z_{1,\ell}$:  
%\beq
\begin{align*}  
E_0\bigl[ G\bigl( (T_{X_k}\w,\,Z_{k+1,k+\ell})_{0\le k\le n} \bigr) \one\{ Z_{1,\ell}=z_{1,\ell}\}  \bigr]
= \tfrac1{|\range|^\ell} \,E_\wz[G(\wz_0, \dotsc, \wz_n)].  
\end{align*} %\label{two-chains}
%\eeq  
%So for large deviation asymptotics it is immaterial which process is used:   
Consequently   for any $z_{1,\ell}\in\range^\ell$  and  $\P$-almost every $\w$,
with    $\wz=(\w,z_{1,\ell})$, 
\[\varliminf_{n\to\infty}n^{-1}\log E_0\Big[e^{n R_n^{\oneell}(g)}\one\{R_n^{\oneell}\in O\}\Big]\ge
\varliminf_{n\to\infty}n^{-1}\log E_{\wz}\Big[e^{n L_n(g)}\one\{L_n\in O\}\Big].\]

Next we reduce the right-hand sides of \eqref{Lambda>H*-1} and \eqref{Lambda>H*-2}
to nice measures.  A convexity argument shows that 
the supremum/infimum is not altered by restricting it to measures $\mu$ 
with these properties:  
   $\mu_0\ll\P$ and 
there exists   a kernel $q\in \MC(\bigom_\ell)$ such that $\mu q=\mu$, 
$H(\mu\times q\,|\,\mu\times\pell)<\infty$,   $q(\wz,\cdot)$ is supported on  
shifts $\Sopr_z\wz$, and 
  $q(\wz,\Sopr_z\wz)>0$ for all $z\in\range$ and $\mu$-a.e.\ $\wz$. 
We omit this argument.  It can be patterned after the lower bound
	proof of Theorem 3.1 of \cite{Ras-Sep-10-} (page 224).  This step 
 needs the integrability of  $g(\w,z_{1,\ell})$ under $\P$.
  
These properties of $\mu$  imply  the equivalence $\mu_0\sim\P$
and  the  ergodicity 
of the Markov chain $Q_\wz$  with initial state $\wz$ and transition
kernel $q$   (Lemma 4.1 of \cite{Ras-Sep-10-}). 

  Next follows a standard 
change of measure argument.   Let $\sF_n$ be the $\sigma$-algebra generated by $(\wz_0,\dotsc,\wz_n)$. Then
	\begin{align*}
		&n^{-1}\log E_{\wz}\Big[e^{n L_n(g)}\one\{L_n\in O\}\Big]\\
		&\ge n^{-1}\log 
			\frac{E^{Q_\wz}\Big[\Big(\frac{d{Q_\wz}_{|\sF_{n-1}}}{d{P_\wz}_{|\sF_{n-1}}}\Big)^{-1}
				e^{n L_n(g)}\one\{L_n\in O\}\Big]}{Q_\wz\{L_n\in O\}}
			+n^{-1}\log Q_\wz\{L_n\in O\}\\
		%\ge since the Q measure might not be equivalent to P
		&\ge\frac{-n^{-1} E^{Q_{\wz}}\Big[\log\Big(\frac{d{Q_\wz}_{|\sF_{n-1}}}{{dP_\wz}|_{\sF_{n-1}}}\Big)\Big]}{Q_\wz\{L_n\in O\}}
		+\frac{E^{Q_{\wz}}[L_n(g)]}{Q_\wz\{L_n\in O\}}+n^{-1}\log Q_\wz\{L_n\in O\}\\
		&\quad+\frac{n^{-1} E^{Q_\wz}\Big[\log\Big(\frac{{dQ_\wz}_{|\sF_{n-1}}}{d{P_\wz}_{|\sF_{n-1}}}\Big)
				\one\{L_n\notin O\}\Big]}{Q_\wz\{L_n\in O\}}
		-\frac{ E^{Q_\wz}[L_n(g)\one\{L_n\notin O\}]}{Q_\wz\{L_n\in O\}}\\
		&=
		\frac{-n^{-1}H\Big({Q_\wz}_{|\sF_{n-1}}\,\Big|\,{P_\wz}_{|\sF_{n-1}}\Big)}
		{Q_\wz\{L_n\in O\}}+\frac{E^{Q_{\wz}}[L_n(g)]}{Q_\wz\{L_n\in O\}}
		+n^{-1}\log Q_\wz\{L_n\in O\}\\
			&\quad+\frac{n^{-1} E_\wz\Big[
			\frac{d{Q_\wz}_{|\sF_{n-1}}}{d{P_\wz}|_{\sF_{n-1}}}
			\log\Big(\frac{d{Q_\wz}_{|\sF_{n-1}}}{{dP_\wz}_{|\sF_{n-1}}}\Big)
				\one\{L_n\notin O\}\Big]}
			{Q_\wz\{L_n\in O\}}
		-\frac{E^{Q_\wz}[L_n(g)\one\{L_n\notin O\}]}{Q_\wz\{L_n\in O\}}\\
		&\ge
		-\frac{E^{Q_\wz}\Big[n^{-1}\sum_{k=0}^{n-1}F(\wz_k)\Big]}
		{Q_\wz\{L_n\in O\}}+\frac{E^{Q_{\wz}}[L_n(g)]}{Q_\wz\{L_n\in O\}}
		+n^{-1}\log Q_\wz\{L_n\in O\}\\
		&\quad
		-\frac{n^{-1}e^{-1}}{Q_\wz\{L_n\in O\}}
		-\frac{(\sup g) Q_\wz\{L_n\notin O\}}{Q_\wz\{L_n\in O\}},
	\end{align*}
	where we used $x\log x\ge-e^{-1}$ and
		\[F(\wz)=\sum_{z\in\range} q(\wz,\Sopr_z\wz)\,
			\log\frac{q(\wz,\Sopr_z\wz)}{\pell(\wz,\Sopr_z\wz)}\,.\]
	Since $F\ge0$ by Jensen's inequality and $g$ is bounded above, 
	ergodicity gives the limits   for $\mu_0$-a.e.\ $\w$:  
		\[\varliminf_{n\to\infty} n^{-1}\log E_0\Big[e^{n R_n^{\oneell}(g)}\one\{R_n^{\oneell}\in O\}\Big]
		\ge E^\mu[g]-H(\mu\times q\,|\,\mu\times\pell).\]
		(For the details of $Q_\wz\{L_n\in O\}\to1$ see the proof of Lemma 4.2 of \cite{Ras-Sep-10-}.) By $\mu_0\sim\P$   this also holds $\P$-a.s.
%See the proofs of Lemmas 3.2 and 4.2 of \cite{Ras-Sep-10-} for more detailed explanations of this computation.
	\end{proof}

We are ready for the proof of the theorem.

\begin{proof}[Proof  of Theorem \ref{Lambda=Hstar}]
Assume first that  $g\in\Ll$ is bounded above.  Then  
Lemmas \ref{lm-new-upper},   \ref{lm:iv} and   \ref{lower-bound-lemma}   give these  
$\P$-a.s.\  inequalities:  
\[  \overline\pres_\ell(g,\w)\le \unK_\ell(g)  \le \ratell^\#(g)\le  \underline\pres_\ell(g,\w). \]
 Existence of the limit $\pres_\ell(g)$  and 
$\pres_\ell(g)=\unK_\ell(g)=\ratell^\#(g)$ follow.  

Next, consider $g\in\Ll$. 
Lemma \ref{K-Fatou} implies that $\unK_\ell(g)=\sup_c\unK_\ell(\min(g,c))$. 
Existence of the limit $\pres_\ell(\min(g,c))$ combined with Lemma \ref{lm-new-upper}
implies 
	\begin{align*}
\unK_\ell(g)&=\sup_c\unK_\ell(\min(g,c))=\sup_c\pres_\ell(\min(g,c))=\sup_c\underline\pres_\ell(\min(g,c))\\
	&\le\underline\pres_\ell(g)\le\overline\pres_\ell(g)\le \unK_\ell(g) .\end{align*}
Existence of the limit and the equality 
 $\pres_\ell(g)=\unK_\ell(g)$   follow again.  
For the other variational formula write
	\begin{align*}
	\unK_\ell(g)&=\sup_c\unK_\ell(\min(g,c))\\
	&=\sup_c
	\sup_{\mu\in\measures(\bigom_\ell)}\{E^\mu[\min(g,c)]-\ratell(\mu)\}=\ratell^\#(g).\qedhere
\end{align*}
%
%Finally, let $G(\w)=\min_{z_{1,\ell}\in\range^\ell} g(\w,z_{1,\ell})\in L^1(\P)$ and observe that
%$\P\refp=\P$. Thus
%	\[\pres_\ell(g)\ge\pres_0(G)\ge H_{0,\P}^\#(G)\ge \E[G]-H(\P\times\refp\,|\,\P\times\refp)=\E[G]>-\infty.\qedhere\]
 \end{proof}

\section{Large deviations under quenched polymer measures}
\label{sec-ldp-pol}
As before, we continue to assume  that  $\range$ is finite and 
Ê$(\Omega,\kS,\P,\{T_z:z\in\gr\})$Ê
is a measurable ergodic  system where $\gr$ is the additive subgroup 
of $\Z^d$ generated by $\range$. 
Now  assume additionally that   $\Omega$ is a separable metric space and $\kS$ is 
its Borel $\sigma$-algebra.   

Since our limiting logarithmic moment generating functions $\pres_\ell(g)$ are defined only $\P$-a.s.\ we need
a separable function space  that generates the weak topology of probability measures.
%When  $\Omega$, and hence also $\bigom_\ell$,  is a separable metric space, we
Give $\bigom_\ell$   a totally bounded metric and let $\unif(\bigom_\ell)$ be the space of  uniformly
continuous functions under this metric.  These functions are bounded. 
 The space  $\unif(\bigom_\ell)$ is separable 
under the supremum norm and generates the same
topology on $\measures(\bigom_\ell)$ as does the space of bounded continuous functions.

Given a real-valued function $V$ on $\bigom_\ell$ define the quenched
 polymer measures %on paths by 	
	\[Q^{V,\w}_{n,0}(A) 
	=\frac1{Z_{n,0}^{V,\w}}E_0
\bigl[e^{-\sum_{k=0}^{n-1}V(T_{X_k}\w, Z_{k+1,k+\ell})}\one_A(\w, X_{0,\infty})\bigr],\]
where $A$ is an event on environments and paths and 
 	\[Z_{n,0}^{V,\w}=E_0\Big[e^{-\sum_{k=0}^{n-1}V(T_{X_k}\w, Z_{k+1,k+\ell})}\Big].\]
Theorem \ref{Lambda=Hstar} gives the a.s.\ limit  
$\pres_\ell(-V)=\lim n^{-1}\log Z_{n,0}^{V,\w}$. Next  we prove 
a LDP  for the quenched distributions
	$Q^{V,\w}_{n,0}\{R_n^{\oneell}\in \cdot\,\}$  of  the empirical  measure 
%	$Q^{V,\w}_{n}\{R_n^{\oneinfty}\in \cdot\,\}$  of  the empirical process 
%\beq R_n^{\oneinfty}=n^{-1}\sum_{k=0}^{n-1}\delta_{T_{X_k}\w,\,Z_{k+1,\infty}}. \label{empproc} 
%\eeq    
%These distributions  are   probability measures on  
%  $\measures(\Omega\times\range^\N)$,   the space of {\color{red}Borel} probability measures on
%  $\Omega\times\range^\N$ endowed with the weak topology generated by bounded
%  continuous functions.   This   process level LDP is proved 
%via a projective limit argument
% from  large deviation theory.   
%  The intermediate steps are   
%multivariate quenched level 2  LDPs.   
%define  the multivariate empirical measure 
	\begin{align*}
	R_n^{\oneell}=n^{-1}\sum_{k=0}^{n-1}\delta_{T_{X_k}\w,\,Z_{k+1,k+\ell}}.%\label{level2 empproc}
	\end{align*}
%$R_n^0$ is the empirical measure of the environment process $T_{X_k}\w$ without any steps.  
%$R_n^{\oneell}$  lives on  the space  $\bigom_\ell=\Omega\times\range^\ell$  whose generic element is   denoted by $\wz=(\w,z_{1,\ell})$.   
	 	
%Next  we state   the multivariate level 2 quenched LDP under $Q^{V,\w}_{n}$.

\begin{theorem}\label{th:ldp RWRP}
%	Let $(\Omega,\kS,\P,\{T_z:z\in\gr\})$ be a measurable ergodic system.  Assume $\range$ is finite and $\Omega$ is metric and separable.
Fix $\ell\ge 0$.  	Let  $V$ be a measurable function  
on $\bigom_\ell$,  $V\in \Ll$ and $\pres_\ell(-V)<\infty$. 
%	 \begin{itemize}
%	 \item[{\rm (a)}] 
   Then for $\P$-a.e.\ $\w$
	the weak large deviation principle holds for the sequence of probability distributions 
	$Q_{n,0}^{V,\w}\{R_n^{\oneell}\in\cdot\,\}$ on $\measures(\bigom_\ell)$ with convex rate function 
\beq	 \label{lower-rate-rwrerp1}		 
		\Iqell^V(\mu)=\sup_{g\in\unif(\bigom_\ell)}\{E^\mu[g]-\pres_\ell(g-V)\}+\pres_\ell(-V).
		\eeq
	Rate $\Iqell^V$ is also equal to the lower semicontinuous regularization of 
\beq	 \label{lower-rate-rwrerp2}
	\ratell^V(\mu)=\inf_{c<0}\{\ratell(\mu)+E^{\mu}[\max(V,c)]+\pres_\ell(-V)\}.
 \eeq
% 	\item[{\rm(b)}] For $\P$-a.e.\ $\w$, the weak large deviation principle holds for %the laws 
%	 $Q_{n,0}^{V,\w}\{R_n^{\oneinfty}\in\cdot\,\}$
%	with convex rate function 
%		\[\Iqr^V(\mu)=\sup_\ell \Iqell^V(\mu|_{\bigom_\ell}).\]
%	\end{itemize}
\end{theorem}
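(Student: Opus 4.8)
The plan is to derive the weak LDP for $Q_{n,0}^{V,\w}\{R_n^{\oneell}\in\cdot\,\}$ from the free energy results of Theorem \ref{Lambda=Hstar} by a standard tilting/Varadhan-type argument. First I would record that, by the definition of $Q_{n,0}^{V,\w}$, for any bounded continuous $g$ on $\bigom_\ell$,
\[
n^{-1}\log E^{Q_{n,0}^{V,\w}}\bigl[e^{nR_n^{\oneell}(g)}\bigr]
= n^{-1}\log E_0\bigl[e^{nR_n^{\oneell}(g-V)}\bigr] - n^{-1}\log Z_{n,0}^{V,\w}.
\]
Applying Theorem \ref{Lambda=Hstar} to $g-V$ and to $-V$ (both in $\Ll$, since $g$ is bounded and $\Ll$ is closed under bounded perturbations, and using $\pres_\ell(-V)<\infty$ together with $\pres_\ell(g-V)\le \pres_\ell(-V)+\norm{g}_\infty<\infty$), the right side converges $\P$-a.s.\ to $\pres_\ell(g-V)-\pres_\ell(-V)$. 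Thus the quenched normalized logarithmic moment generating functionals of $R_n^{\oneell}$ under $Q_{n,0}^{V,\w}$ converge, $\P$-a.s., to the deterministic limit $\Phi(g):=\pres_\ell(g-V)-\pres_\ell(-V)$ for every $g\in\unif(\bigom_\ell)$. Since $\unif(\bigom_\ell)$ is separable under the sup-norm, one can fix a countable dense subset, take the a.s.\ convergence on a single event of full $\P$-measure, and extend to all of $\unif(\bigom_\ell)$ by the uniform Lipschitz-in-$g$ bound $|\Phi(g)-\Phi(g')|\le\norm{g-g'}_\infty$ that follows from monotonicity of $\pres_\ell$.

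Next I would invoke the abstract Varadhan-type converse (Baxter--Jain, or the version in \cite{Ras-Sep-10-ldp-}): if for a sequence of random probability measures $\nu_n$ on a space where a separable class of bounded continuous functions generates the weak topology, the limits $\Phi(g)=\lim_n n^{-1}\log E^{\nu_n}[e^{nR_n(g)}]$ exist for all $g$ in that class, then the weak LDP holds with rate function $I(\mu)=\sup_g\{E^\mu[g]-\Phi(g)\}$, which is automatically convex and lower semicontinuous. Here $\nu_n=Q_{n,0}^{V,\w}\{R_n^{\oneell}\in\cdot\,\}$, the space is $\measures(\bigom_\ell)$, and $\unif(\bigom_\ell)$ is the separating class. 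This yields exactly
\[
\Iqell^V(\mu)=\sup_{g\in\unif(\bigom_\ell)}\{E^\mu[g]-\pres_\ell(g-V)\}+\pres_\ell(-V),
\]
which is \eqref{lower-rate-rwrerp1}. The compact upper bound and open lower bound come for free from this machinery; only compactness of level sets (full LDP) is not claimed, consistent with the "weak LDP" language in the introduction.

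It remains to identify $\Iqell^V$ with the lower semicontinuous regularization of $\ratell^V$ in \eqref{lower-rate-rwrerp2}. For this I would use the second variational formula of Theorem \ref{Lambda=Hstar}, namely $\pres_\ell(g) = \ratell^\#(g) = \sup_{\mu,c>0}\{E^\mu[\min(g,c)]-\ratell(\mu)\}$, applied to $g-V$. Plugging this into \eqref{lower-rate-rwrerp1} gives a double sup over $g$ and over $(\nu,c)$; interchanging the suprema and performing the sup over $g\in\unif(\bigom_\ell)$ first, the inner problem is $\sup_g\{E^\mu[g]-E^\nu[\min(g-V,c)]\}$. Along the lines of the duality argument, this forces $\mu=\nu$ and collapses to $E^\mu[\max(V,-c)]$ (the truncation of $V$ from below at level $-c$), producing $-\inf_{c<0}\{\ratell(\mu)+E^\mu[\max(V,c)]\}$ up to the additive constant $\pres_\ell(-V)$; that is, $\Iqell^V$ equals the convex conjugate of (minus) the expression defining $\ratell^V$, hence its lower semicontinuous convex regularization. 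One should double-check that $\Iqell^V\le\ratell^V$ pointwise (direct from Jensen applied to the definition of $\pres_\ell$, giving $\pres_\ell(g-V)\ge E^\mu[g]-E^\mu[\max(V,c)]-\ratell(\mu)$ when $\mu q=\mu$) and that equality holds after lsc regularization, which is where a careful truncation argument and the monotone convergence $\sup_c$ over the truncation of $V$ are needed. I expect the main obstacle to be precisely this last identification: justifying the interchange of suprema and the sup-over-$g$ evaluation rigorously (density of $\unif(\bigom_\ell)$ in bounded measurable functions in the relevant sense, measurability issues in $E^\mu[g]$ versus $E^\mu[\min(g-V,c)]$, and handling the unbounded-below $V$ via the truncations $\max(V,c)$), together with checking that no further lsc regularization beyond what \eqref{lower-rate-rwrerp2} already carries is lost; the Varadhan-converse part is routine given Theorem \ref{Lambda=Hstar}.
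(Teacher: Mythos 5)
Your first step --- the identity
\[
n^{-1}\log E^{Q_{n,0}^{V,\w}}\bigl[e^{nR_n^{\oneell}(g)}\bigr]
= n^{-1}\log E_0\bigl[e^{nR_n^{\oneell}(g-V)}\bigr] - n^{-1}\log Z_{n,0}^{V,\w}
\ \longrightarrow\ \pres_\ell(g-V)-\pres_\ell(-V)
\]
for $g\in\unif(\bigom_\ell)$, on one full-measure event --- is exactly what the paper does, and the convex-duality upper bound for compact sets with rate \eqref{lower-rate-rwrerp1} then follows as you say. The gap is in the next sentence: the existence of the limiting logarithmic moment generating functional $\Phi(g)$ for all $g$ in a separating class of bounded continuous functions does \emph{not} by itself give the lower bound for open sets. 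That is a one-sided implication; the inverse-Varadhan / Bryc machinery that would give the lower bound requires exponential tightness (or regularity of $\Phi$, exposed points, etc.), none of which is assumed here --- indeed the theorem only claims a \emph{weak} LDP precisely because exponential tightness is unavailable in general. The paper does not invoke any Baxter--Jain-type converse for the lower bound. Instead it proves the lower bound directly from Lemma \ref{lower-bound-lemma} (a Markov-chain change-of-measure argument giving a lower bound with rate $\ratell^V$) together with the elementary truncation estimate
\[
n^{-1}\log Q_{n,0}^{V,\w}\{R_n^{\oneell}\in O\}\ \ge\
n^{-1}\log E_0\bigl[e^{-n R_n^{\oneell}(\max(V,c))}\one\{R_n^{\oneell}\in O\}\bigr]
-n^{-1}\log E_0\bigl[e^{-n R_n^{\oneell}(V)}\bigr],
\]
valid for $c<0$, which brings in a bounded-above potential to which Lemma \ref{lower-bound-lemma} applies. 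Without this step your proposal has no lower bound at all.

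The identification of $\Iqell^V$ with the lsc regularization of $\ratell^V$ is also shakier than you suggest. After substituting $\pres_\ell(g-V)=\ratell^\#(g-V)$ into \eqref{lower-rate-rwrerp1} you do not get a double supremum: the inner $\pres_\ell(g-V)$ is a supremum over $(\nu,c)$ appearing with a minus sign, so the expression is a $\sup_g\inf_{\nu,c}$, and a minimax interchange would need justification that is not immediate. The paper avoids this by never interchanging: it computes, directly from Theorem \ref{Lambda=Hstar} and the truncation identity $\min(g-V,c)$ versus $g-\max(V,c)$, that
\[
\pres_\ell(g-V)-\pres_\ell(-V)=\sup_{\mu}\{E^\mu[g]-\ratell^V(\mu)\}
\]
for all $g\in\unif(\bigom_\ell)$, i.e.\ the limiting normalized MGF is already the convex conjugate of $\ratell^V$. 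Then $\Iqell^V$ is by definition the conjugate of that functional, hence the double conjugate $\ratell^{V,**}$, and since $\ratell^V$ is convex this is its lsc regularization. You should rework this part along these lines; as written, the "forces $\mu=\nu$ and collapses" step is not a proof.
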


\begin{proof}[Proof of Theorem \ref{th:ldp RWRP}] 
We show an upper bound for compact sets $A$:
\beq
\varlimsup_{n\to\infty} n^{-1} \log Q_{n,0}^{V,\w}\{R_n^{\oneell}\in A\} 
 {\le} 
-\inf_{\mu\in A}\Iqell^V(\mu), 
\label{QUB}\eeq
a lower bound for open sets $G$: 
\beq
\varliminf_{n\to\infty} n^{-1} \log Q_{n,0}^{V,\w}\{R_n^{\oneell}\in G\} \ge -\inf_{\mu\in G}\ratell^V(\mu), 
\label{QLB}\eeq
and then match the rates.  
%The lower bound in (\ref{inequalities2}{\color{my-red}.i}) will again follow from a special
%case of  Lemma \ref{lower-bound-lemma} and the upper bound (\ref{inequalities2}{\color{my-red}.ii}) 
% is an instance of a general upper bound. The upper and lower bounds then match because $\pres_\ell(g)=\ratell^\#(g)$ implies that
% $\Iqell^V$ is the lower semicontinuous regularization of $\ratell^V$.

By Theorem \ref{Lambda=Hstar}  and separability of $\unif(\bigom_\ell)$ 
we have $\P$-a.s.\  these finite limits for all $g\in\unif(\bigom_\ell)$: 
\[\lim_{n\to\infty}  n^{-1}\log E^{Q^{V,\w}_{n,0}} [e^{nR^{\oneell}_n(g)} ]  =  \pres_\ell(g-V) - \pres_\ell(-V).\]
%This holds simultaneously for all $g\in because this space is separable.
%[we take a countable dense set in the space of bounded
%unif cont functions under a totally bounded metric on \bigom_\ell;  this space of functions is
%separable and is rich enough to generate the weak top of M_1.  I don't think we need to say this
%stuff in [ ]'s.   ]
 \eqref{QUB} follows  by a general convex duality 
 argument (see Theorem 4.5.3 in \cite{Dem-Zei-98} or  Theorem 5.24 in \cite{Ras-Sep-10-ldp-}).     
 
Lower bound \eqref{QLB} follows from   Lemma \ref{lower-bound-lemma} and a truncation: 
for   $-\infty<c<0$ 
\begin{align*}
n^{-1}\log Q_{n,0}^{V,\w}\{R_n^{\oneell}\in O\} &\ge 
n^{-1}\log E_0\bigl[e^{-n R_n^{\oneell}(\max(V,c))}\one\{R_n^{\oneell}\in O\}\bigr]\\
&\qquad - n^{-1}\log E_0\bigl[e^{-n R_n^{\oneell}(V)}\bigr]. 
\end{align*}  
\eqref{QLB}  continues
   to hold if $\ratell^V$ is replaced with its lower semicontinuous regularization
 $\ratell^{V, **}(\mu)=\sup_B \inf_{\nu\in B}\ratell^V(\nu)$   where the supremum is over
 open neighborhoods $B$ of $\mu$.   
 
   Theorem \ref{Lambda=Hstar} implies that for $g\in\unif(\bigom_\ell)$
	\begin{align*}
	\pres_\ell(g-V)-\pres_\ell(-V)
	&=\sup_{\mu\in\measures(\bigom_\ell),\,c>0}\{ E^\mu[\min(g-V,c)] - \ratell(\mu)-\pres_\ell(-V)\}\\
	&=\sup_{\mu\in\measures(\bigom_\ell),\,c<0}\{ E^\mu[g-\max(V,c)] - \ratell(\mu)-\pres_\ell(-V)\}\\
	&=\sup_{\mu\in\measures(\bigom_\ell)}\{E^\mu[g]-\ratell^V(\mu)\}. 
	\end{align*}
Another convex duality  gives $\Iqell^V(\mu)=\ratell^{V, **}(\mu)$ because
the  lower semicontinuous regularization
 $\ratell^{V, **}$ is also equal to the double convex dual of  
 $\ratell^{V}$.  
%the equality of \eqref{lower-rate-rwrerp1} and \eqref{lower-rate-rwrerp2}. 
%Part (a) is proved. Part (b) follows then from a projective limit step.
%This step is coded for example as Theorem~4.6.1 in \cite{Dem-Zei-98}.
\end{proof}

Next we record the LDP for 
  the quenched distributions of the empirical process $R_n^{\oneinfty}=n^{-1}\sum_{k=0}^{n-1}\delta_{T_{X_k}\w,\,Z_{k+1,\infty}}$. 
  
 \begin{theorem}  \label{lev3thm} 	Let  $V$ be a measurable function  
on some $\bigom_{\ell_0}$ with  $V\in \Ll$ and $\pres_{\ell_0}(-V)<\infty$. 
  Then for $\P$-a.e.\ $\w$  
	the weak large deviation principle  holds for the sequence of probability distributions 
	%$P_\wz\{L_n\in\cdot\,\}$ and  
	$Q^{V,\w}_{n,0}\{ R_n^{\oneinfty} \in\cdot\,\}$   on  $\measures(\Omega\times\range^\N)$  with convex rate function $\Iqr^V(\mu)=\sup_{\ell\ge \ell_0}  \Iqell^V(\mu|_{\bigom_\ell})$.
\end{theorem}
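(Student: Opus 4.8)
The plan is to obtain the level-$3$ weak LDP as a projective limit of the level-$(2,\ell)$ LDPs of Theorem~\ref{th:ldp RWRP}, $\ell\ge\ell_0$, much as in the Dawson--G\"artner theorem but adapted to \emph{weak} LDPs. First I would arrange the projective structure. Regard $V$ as a function on $\bigom_\ell$ for each $\ell\ge\ell_0$ by letting it ignore the extra step coordinates; then $V\in\Ll$ on every $\bigom_\ell$ (membership in $\Ll$ only concerns the $\w$-dependence) and $\pres_\ell(-V)=\pres_{\ell_0}(-V)<\infty$ (the added coordinates do not enter the exponent), so Theorem~\ref{th:ldp RWRP} holds at every level $\ell\ge\ell_0$, simultaneously for $\P$-a.e.\ $\w$ after intersecting countably many full-measure events. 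Equip $\Omega$, hence each $\bigom_\ell$ and $\Omega\times\range^\N$, with totally bounded metrics under which the coordinate projections onto the $\bigom_\ell$ are Lipschitz; since cylinder uniformly continuous functions are then sup-norm dense in $\unif(\Omega\times\range^\N)$, the weak topology on $\measures(\Omega\times\range^\N)$ is the initial topology generated by the continuous marginal maps $\mu\mapsto\mu|_{\bigom_\ell}$, i.e.\ the projective limit of the weak topologies on the $\measures(\bigom_\ell)$. Writing $\mu_n=Q^{V,\w}_{n,0}\{R_n^{\oneinfty}\in\cdot\}$, its $\bigom_\ell$-marginal is $\nu_n^{(\ell)}:=Q^{V,\w}_{n,0}\{R_n^{\oneell}\in\cdot\}$ because $R_n^{\oneinfty}$ projects onto $R_n^{\oneell}$. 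Finally I would record that $\Iqr^V=\sup_{\ell\ge\ell_0}\Iqell^V(\cdot|_{\bigom_\ell})$ is convex and lower semicontinuous, being a supremum of composites of the convex l.s.c.\ $\Iqell^V$ with the affine continuous marginal maps, and that $\ell\mapsto\Iqell^V(\mu|_{\bigom_\ell})$ is nondecreasing, which follows from \eqref{lower-rate-rwrerp1} together with $\unif(\bigom_\ell)\subseteq\unif(\bigom_{\ell+1})$ and $\pres_{\ell+1}(h-V)=\pres_\ell(h-V)$ for $h\in\unif(\bigom_\ell)$.

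For the lower bound, given open $G$ and $\mu\in G$ with $\Iqr^V(\mu)<\infty$, I would use the projective limit topology to pick $\ell\ge\ell_0$ and an open $O\subseteq\measures(\bigom_\ell)$ with $\mu|_{\bigom_\ell}\in O$ and $\{\nu:\nu|_{\bigom_\ell}\in O\}\subseteq G$, bound $\mu_n(G)\ge\nu_n^{(\ell)}(O)$, and apply the level-$\ell$ lower bound to get $\varliminf_n n^{-1}\log\mu_n(G)\ge-\inf_O\Iqell^V\ge-\Iqell^V(\mu|_{\bigom_\ell})\ge-\Iqr^V(\mu)$; optimizing over $\mu\in G$ finishes it. For the compact upper bound, the point is that $K|_{\bigom_\ell}=\{\mu|_{\bigom_\ell}:\mu\in K\}$ is compact, so the level-$\ell$ \emph{weak} upper bound applies to it: from $\mu_n(K)\le\nu_n^{(\ell)}(K|_{\bigom_\ell})$ one gets $\varlimsup_n n^{-1}\log\mu_n(K)\le-\inf_{\mu\in K}\Iqell^V(\mu|_{\bigom_\ell})$ for every $\ell$, hence $\varlimsup_n n^{-1}\log\mu_n(K)\le-\sup_{\ell\ge\ell_0}\inf_{\mu\in K}\Iqell^V(\mu|_{\bigom_\ell})$. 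It then remains to interchange $\sup_\ell$ and $\inf_K$: writing $\beta$ for the common value sought in $\sup_\ell\inf_K\Iqell^V(\cdot|_{\bigom_\ell})=\inf_K\Iqr^V$, the sublevel sets $K_\ell=\{\mu\in K:\Iqell^V(\mu|_{\bigom_\ell})\le\beta\}$ are (when $\beta<\infty$) nonempty because a l.s.c.\ function attains its infimum on the compact $K$, closed by lower semicontinuity, and nested decreasing by the monotonicity in $\ell$, so the finite intersection property gives a point of $\bigcap_\ell K_\ell$ whose $\Iqr^V$-value is $\le\beta$; since $\sup_\ell\inf_K\le\inf_K\sup_\ell$ always holds, the two sides agree. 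This yields the weak LDP --- upper bound on compact sets, lower bound on open sets --- with the single rate function $\Iqr^V$, which by the first paragraph is convex and lower semicontinuous.

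I expect the $\sup$--$\inf$ interchange in the compact upper bound to be the only genuine difficulty: because Theorem~\ref{th:ldp RWRP} furnishes only a \emph{weak} LDP, at each finite level the upper bound is available solely on compact sets, so the usual Dawson--G\"artner argument through closed superlevel sets of the $\Iqell^V$ is unavailable; instead one must route everything through the compactness of $K$ and of its projections, and then close the remaining gap using monotonicity and lower semicontinuity of $\ell\mapsto\Iqell^V(\cdot|_{\bigom_\ell})$ together with compactness of $K$. The rest --- identifying the topology on $\measures(\Omega\times\range^\N)$ as a projective limit, the bookkeeping for $V$ across levels, and the convexity and lower semicontinuity of $\Iqr^V$ --- is routine.
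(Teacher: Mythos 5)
Your proposal is correct and follows the same projective-limit strategy the paper uses: the paper cites its Theorem~\ref{proj-ldp-thm} (Appendix B), and your argument essentially reproves that theorem inline, with the $\sup_\ell$--$\inf_K$ interchange handled via nested closed sublevel sets and the finite intersection property rather than the paper's finite open subcover of $K$ --- a dual phrasing of the same compactness argument. You also supply the bookkeeping (lifting $V$ to each $\bigom_\ell$, a.s.\ simultaneity over $\ell$, monotonicity of $\ell\mapsto\Iqell^V(\mu|_{\bigom_\ell})$ via \eqref{lower-rate-rwrerp1}) that the paper states more tersely.
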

%OLD PROOF
%\begin{proof}  This comes from a projective limit.  Formula \eqref{lower-rate-rwrerp1} 
%shows that  $\Iqell^V(\mu\circ \gamma_{\ell+1,\ell}^{-1})\le \Iqella^V(\mu)$ for 
%$\mu\in\measures(\bigom_{\ell+1})$ where $\gamma_{\ell+1,\ell}: \bigom_{\ell+1}
%\to \bigom_{\ell}$ is the natural projection.  Under this condition a projective limit
%argument gives   the weak level 3 LDP.  This is not what is usually proved in textbooks,
%see  e.g.\ Theorem~4.6.1 in \cite{Dem-Zei-98},
%but it is elementary to  verify.
% \end{proof}
 \begin{proof}  This comes from a projective limit.  Formula \eqref{lower-rate-rwrerp1} 
shows that  $\Iqell^V(\mu\circ \gamma_{\ell+1,\ell}^{-1})\le \Iqella^V(\mu)$ for 
$\mu\in\measures(\bigom_{\ell+1})$ where $\gamma_{\ell+1,\ell}: \bigom_{\ell+1}
\to \bigom_{\ell}$ is the natural projection. Since  weak topology of 
$\measures(\Omega\times\range^\N)$ can be generated by uniformly
continuous functions, a base for the topology can be created from 
inverse images of open sets from the spaces $\measures(\bigom_{\ell})$. 
Apply Theorem \ref{proj-ldp-thm}. \end{proof}

In one of the most basic situations, namely for 
  strictly directed walks in i.i.d.\ environments, 
 we can upgrade the weak LDPs into
full LDPs. This means that the upper bound is valid for all closed sets.
 Strictly directed means that there is a vector $\uhat\in\R^d$ such
that $z\cdot\uhat>0$ for all $z\in\range$.  Equivalently,  $0$ does not lie
in the convex hull of $\range$.  

Here is the setting.  Let $\Gamma$ be a Polish space.  Set 
  $\Omega=\Gamma^{\Z^d}$ 
with generic elements $\w=(\w_x)_{x\in\Z^d}$ and shift maps 
$(T_x\w)_y=\w_{x+y}$. Assume 
that  the coordinates $\{\w_x\}$ are i.i.d.\ under $\P$.

\begin{theorem}   As described  above, let   $\P$ be an i.i.d.\ product
measure on a Polish product space $\Omega$. Assume that $0$ does not lie
in the convex hull of $\range$.    Let  $V$ be a measurable function  
on some $\bigom_{\ell}$,  $V\in \Ll$ and  assume that  
$\pres_{\ell}(-\beta V)<\infty$ for some $\beta>1$. 
  Then for $\P$-a.e.\ $\w$  
	the full LDP  holds for the sequence of probability distributions 
 	$Q^{V,\w}_{n,0}\{ R_n^{\oneinfty} \in\cdot\,\}$ 
  on  $\measures(\Omega\times\range^\N)$ 
  with convex rate function $\Iqr^V$ described in Theorem \ref{lev3thm}. 
\label{iid-lev3-thm}\end{theorem}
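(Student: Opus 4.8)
The plan is to upgrade the weak LDP of Theorem~\ref{lev3thm} to a full LDP by establishing exponential tightness of the sequence $Q^{V,\w}_{n,0}\{R_n^{\oneinfty}\in\cdot\,\}$ on $\measures(\Omega\times\range^\N)$, for $\P$-a.e.\ $\w$. Once exponential tightness is in hand, the compact-set upper bound from Theorem~\ref{lev3thm} automatically extends to all closed sets by the standard argument (cf.\ \cite{Dem-Zei-98}), and convexity and lower semicontinuity of $\Iqr^V$ are already known. So the entire content of the proof is exponential tightness, and this is where the extra hypotheses --- i.i.d.\ environment, strict directedness ($0\notin$ conv$(\range)$), and the slightly super-extremal moment bound $\pres_\ell(-\beta V)<\infty$ for some $\beta>1$ --- must be used.

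First I would reduce exponential tightness on $\measures(\Omega\times\range^\N)$ to exponential tightness of the $\Omega$-marginals, i.e.\ of the laws of $R_n^{\oneinfty}|_\Omega=n^{-1}\sum_{k=0}^{n-1}\delta_{T_{X_k}\w}$ on $\measures(\Omega)$. The step component lives on the compact space $\range^\N$, so $\measures(\range^\N)$ is already compact and contributes nothing; a measure $\mu$ on $\Omega\times\range^\N$ lies in a compact set as soon as its $\Omega$-marginal does, because the joint space is a closed subset of the product of a compact space with $\Omega$. For the $\Omega$-marginal, the key structural fact from strict directedness is that the walk never revisits a site: under any admissible path, the visited points $X_0,X_1,\dots,X_{n-1}$ are distinct (since $z\cdot\uhat>0$ forces $X_k\cdot\uhat$ strictly increasing). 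Hence $R_n^{\oneinfty}|_\Omega$ is an empirical measure over $n$ \emph{distinct} i.i.d.-indexed coordinates $\{\w_{X_k}\}$, which under $P_0$ and $\P$ jointly behaves like an i.i.d.\ sample from a law on $\Gamma$ at each fixed coordinate --- this is precisely the situation in which Sanov-type exponential tightness is available on the Polish space $\measures(\Omega)$ (or on $\measures(\Gamma)$ for the single-site marginal, which then lifts).

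The plan for the actual estimate is: fix a reference i.i.d.\ background and, using a Cram\'er / Sanov exponential tightness bound together with the fact that the directed-walk coordinates are genuinely independent, produce for each $b<\infty$ a compact $\mathcal{K}_b\subset\measures(\Omega)$ with
\[
P_0\otimes\P\bigl\{R_n^{\oneinfty}|_\Omega\notin\mathcal{K}_b\bigr\}\le e^{-bn}\qquad\text{for all large }n.
\]
Then transfer this to the quenched polymer measure $Q^{V,\w}_{n,0}$ by a change-of-measure / H\"older argument: writing $Q^{V,\w}_{n,0}$ in terms of the tilt $e^{-nR_n^{\oneell}(V)}/Z_{n,0}^{V,\w}$ and using H\"older with exponents $\beta$ and $\beta/(\beta-1)$,
\[
Q^{V,\w}_{n,0}\{R_n^{\oneinfty}|_\Omega\notin\mathcal{K}_b\}
\le \bigl(Z_{n,0}^{V,\w}\bigr)^{-1}
\Bigl(E_0\bigl[e^{-\beta n R_n^{\oneell}(V)}\bigr]\Bigr)^{1/\beta}
\Bigl(P_0\{R_n^{\oneinfty}|_\Omega\notin\mathcal{K}_b\}\Bigr)^{(\beta-1)/\beta}.
\]
By Theorem~\ref{Lambda=Hstar} applied to $-\beta V\in\Ll$, $n^{-1}\log E_0[e^{-\beta n R_n^{\oneell}(V)}]\to\pres_\ell(-\beta V)<\infty$ $\P$-a.s., and $n^{-1}\log Z_{n,0}^{V,\w}\to\pres_\ell(-V)$ $\P$-a.s.; combined with the quenched exponential tightness bound (which needs a Borel--Cantelli argument over $n$ to pass from the annealed $P_0\otimes\P$ estimate to a $\P$-a.s.\ statement, absorbing the $(\beta-1)/\beta$ factor into a slightly larger $b$), we get $\varlimsup_n n^{-1}\log Q^{V,\w}_{n,0}\{R_n^{\oneinfty}|_\Omega\notin\mathcal{K}_b\}\le -b'$ with $b'\to\infty$ as $b\to\infty$. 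Lifting $\mathcal{K}_b$ to a compact subset of $\measures(\Omega\times\range^\N)$ via the first reduction finishes exponential tightness, hence the full LDP.

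The main obstacle I anticipate is the quenched (as opposed to annealed) exponential tightness: a priori the compact set $\mathcal{K}_b$ produced by Sanov-type arguments controls the \emph{annealed} law $P_0$ of $R_n^{\oneinfty}|_\Omega$, and one must turn this into a $\P$-a.s.\ statement about a \emph{fixed} $\w$. This is exactly where the i.i.d.\ product structure and the no-self-intersection property of strictly directed walks are indispensable: they let one choose $\mathcal{K}_b$ depending only on the single-site marginal law $\P_0\in\measures(\Gamma)$, so that $\mathcal{K}_b$ is deterministic, and the Borel--Cantelli step over $n$ (using that the annealed bound decays like $e^{-bn}$, which is summable) upgrades the annealed statement to a $\P$-a.s.\ one. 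Getting the bookkeeping right --- that the compact set does not secretly depend on $\w$, and that the H\"older exponent loss is genuinely recovered by the margin $\beta>1$ --- is the delicate part; the rest is routine soft analysis.
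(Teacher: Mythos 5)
Your proposal is correct and takes essentially the same approach as the paper's proof: reduce exponential tightness to the $\Omega$-marginal (compactness of $\range^\N$), pass from annealed to quenched by Borel--Cantelli, get the annealed bound from Sanov using that strictly directed walks never self-intersect so $\{\w_{X_k}\}$ are i.i.d., and transfer from $P_0$ to $Q^{V,\w}_{n,0}$ by exactly the same H\"older inequality with exponents $\beta$ and $\beta/(\beta-1)$, controlled by $\pres_\ell(-\beta V)<\infty$ and the lower bound on $Z^{V,\w}_{n,0}$ from \eqref{La-lb}. The only difference is the cosmetic ordering of the reductions.
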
 
 
\begin{proof}  $\pres_\ell(V)<\infty$ by Jensen's inequality. Due to Theorem \ref{lev3thm} it suffices to show that  the distributions  
$Q^{V,\w}_{n,0}\{ R_n^{\oneinfty} \in\cdot\,\}$  are exponentially tight
for $\P$-a.e.\ $\w$.  Suppose we can show that 
\beq 
\text{ distributions $P_0 \{ R_n^{\oneinfty} \in\cdot\,\}$  are exponentially tight
for $\P$-a.e.\ $\w$. } \label{iid-5} \eeq
 From the lower bound in \eqref{La-lb} and the hypotheses on $V$ we have  constants $0<c_0, c_1<\infty$ such that,
 for $\P$-a.e.\ $\w$, 
\[
E_0[e^{-nR_n^{\oneell}(V)}] \ge e^{-c_0n}  \quad\text{and}\quad  
E_0[e^{-nR_n^{\oneell}(\beta V)}] \le e^{c_1\beta n}   \]
for large enough $n$.  Fix  $\w$ so that these bounds and \eqref{iid-5} hold. 
Given $c<\infty$, pick a compact $A\subset\measures(\Omega\times\range^\N)$ 
such that $P_0 \{ R_n^{\oneinfty} \in A^c\} \le e^{-\beta(c_0+c_1+c)n/(\beta-1)}$
for large $n$.  
Then 
\begin{align*}
&Q^{V,\w}_{n,0}\{ R_n^{\oneinfty} \in A^c\}
\le  E_0[e^{-nR_n^{\oneell}(V)}]^{-1} E_0[e^{-nR_n^{\oneell}(\beta V)}]^{\beta^{-1}}
P_0 \{ R_n^{\oneinfty} \in A^c\}^{1- \beta^{-1}} \le e^{-cn}. 
\end{align*}

Thus it suffices to check \eqref{iid-5}.  Next observe from 
\[
\P\{ \w:  P_0( R_n^{\oneinfty}   \in A^c) \ge e^{-cn}\} \le  e^{cn}
  \bar P( R_n^{\oneinfty}  \in A^c) \]
and the Borel-Cantelli lemma that we only need exponential tightness under
the averaged measure $\bar P=\P\otimes P_0$.  As the last reduction, 
note that by the compactness of $\range^\N$ it is enough to have the
exponential tightness of the $\bar P$-distributions of 
$R_n^0=n^{-1}\sum_{k=0}^{n-1}\delta_{T_{X_k}\w}$.

The exponential tightness that is part of 
Sanov's theorem   gives 
compact sets $\{U_{m,x}: m\in\N, x\in\Z^d\}$ in the state space $\Gamma$ 
of the $\w_x$ such that 
\[   \P\Bigl\{  n^{-1}\sum_{k=0}^{n-1} \one_{U_{m,x}^c}(\w_{y_k}) > e^{-m-\abs{x}} \Bigr\} 
\le e^{-n(m+\abs{x})}.  \] 
Here $\{y_k\}$ are any distinct sites.  
Define 
\[  H_m =\{ Q\in\measures(\Omega): \forall x\in\Z^d \;  Q\{\w: \w_x\notin U_{m,x} \} \le 
e^{-(m+\abs{x})} \}  \]
and compact sets
\[  K_b = \bigcap_{m\ge \ell(b)}  H_m \]
where $\ell=\ell(b)$ is chosen for $b\in\N$  so that 
\[  \sum_{m\ge \ell-b}\sum_x  e^{-(m+\abs{x})}  \le 1.  \]
Now 
\begin{align*}
\bar P( R_n^0 \in K_b^c) &\le \sum_{m\ge \ell(b)} \bar P( R_n^0 \in H_m^c) 
\le \sum_{m\ge \ell(b)}  \sum_x \bar P\bigl( R_n^0\{\w_x\notin U_{m,x} \} > e^{-m-\abs{x}} \bigr)\\
&\le \sum_{m\ge \ell(b)}  \sum_x  
 \bar P\Bigl\{  n^{-1}\sum_{k=0}^{n-1} \one_{U_{m,x}^c}(\w_{x+X_k}) > e^{-m-\abs{x}} \Bigr\} 
 \le e^{-bn}.  
\end{align*}
The crucial point used above was that 
under the assumption on $\range$ 
the points $\{X_n\}$ of the walk are distinct (Corollary \ref{cor:loops}), 
and so the variables  $\{\w_{x+X_n}\}$ are i.i.d.\ under $\bar P$. 
This gives the exponential tightness of the $\bar P$-distributions of 
$R_n^0=n^{-1}\sum_{k=0}^{n-1}\delta_{T_{X_k}\w}$. 
\end{proof}

%suppose \w_x  IID and the walk is space-time.   then  \w_{x_k}  are  IID under the averaged measure.
%Consequently the averaged laws of   of the marginal of the empirical measure, that is,
%L_n = n^{-1}\sum_{k=0}^{n-1} \delta_{\w_{x_k}}, are tight, because it's Sanov.
%But then also quenched exp tightness follows from Chebyshev:
%\P\{    P^\w(   L_n \notin K )  \ge  e^{-nc}    \}   \le   e^{nc}   P(   L_n \notin K )  \le   e^{nc}  e^{-na}
%and now just pick K so that  a can be made as large as needed.
%
%Assuming \La(-V) finite and \La(-2V) finite (does the factor 2 need a separate assumption??) 
%%maybe in the theorem. but not in practice. V in L or L^p implies the same for 2V.
%and also, if V is bounded below, we can shift to have V\ge0 and then 2V\ge V.
%then 
%Q( R_n \in A^c )  =    \frac{  E^\w( e^{nR_n(-V)} ,  R_n \in A^c )  }  {  E^\w( e^{nR_n(-V)}  )  }  
%\le     \frac{  [ E^\w( e^{nR_n(-2V)}  ]^{1/2}    [  P^\w( R_n \in A^c ) ]^{1/2} }  {  E^\w( e^{nR_n(-V)}  )  }     
%now  the  E^\w( e^{nR_n(-V)}  )  type things are bdd above and below by exp(n) - type things, 
%while the probability satisfies exp tightness by what was said before.  
%So the Q-ldp for also for Polish \Omega under strictly directed IID. 

\begin{remark} 
For exponential tightness the theorem above is in some sense best possible.  
Theorem \ref{iid-lev3-thm} can fail if $0$ lies in the convex hull of $\range$. Then a loop is 
possible (Corollary \ref{cor:loops}).
Suppose the distribution of $\w_0$ is not supported on any compact set.
Then, given any  compact set $U$  in $\Gamma$, wait until the walk
finds an environment $\w_x\notin U$, and then forever after  execute a loop at $x$. 
%This way  the empirical measure will give at least a fixed 
%positive probability  to $U^c$.  
%However, we do not know if the LDP of Theorem \ref{th:ldp RWRP} holds in general  for
%all closed sets with a rate function that does not have compact level sets.  
 \end{remark} 
% \begin{remark}
%	Then, by Lemma 6.1 of \cite{Ras-Sep-10-} we have $\sup_\ell\Iqell(\mu|_{\bigom_\ell})=\Iqr(\mu)$ and thus
%	if $V$ is bounded and continuous, then 
%			\[\Iqr^V(\mu)=\Iqr(\mu)-E^{\mu}[V]+\Lambda_\ell(V).\] 
%\end{remark}

%As a corollary of Theorem \ref{th:ldp RWRP} we state the quenched LDP for the position of the walk. 
%The limiting logarithmic moment generating function is defined by
%\beq
%\lambda^V(t)= \lim_{n\to\infty} \frac1n \log Q_{n,0}^{V,\w}[e^{\,t\cdot X_n}], \quad
%t\in\R^d
%\label{level1lmgf|RWRP} \eeq
%assuming that the limit exists, and its convex conjugate by 
%\[ (\lambda^V)^*(\zeta)=\sup_{t\in\R^d}\{\zeta\cdot t-\lambda^V(t)\},\quad \zeta\in\R^d. \]  

%\begin{theorem}\label{level1 RWRP}   
%	Let $(\Omega,\kS,\P,\{T_z:z\in\gr\})$ be a measurable ergodic system.
%	 Assume $\range$ is finite and $\Omega$ is compact.
%	 Assume $V(\w,z_{1,\ell})\in\Ll$ for each $z_{1,\ell}\in\range$.
%	 Then, for $\P$-a.e.\ $\w$,  the limit in \eqref{level1lmgf|RWRP}
%exists for all $t\in\R^d$  and equals $\pres_\ell(-V+t\cdot z_1)-\pres_\ell(-V)$.
% The large deviation principle holds 
%for the distributions  $Q_{n,0}^{V,\w}\{n^{-1}X_n\in\cdot\}$
%	with convex rate function  $I_{q,1}^V=(\lambda^V)^*$. 
%\end{theorem}

%Of course, $I_{q,1}^V$ is a contraction of $\Iqr^V$ (or $\Iqell^V$) and, by  Theorem \ref{Lambda=Hstar}, $\lambda^V(t)$ has two variational formulas.

\section{Large deviations for random walk in random environment}
This final section before the appendices is a remark about adapting the 
results of Section \ref{sec-ldp-pol} to RWRE  
 %random walk in random environment.  
  described in Example \ref{ex:RWRE}.   
  Continue with the assumptions on Ê$(\Omega,\kS,\P,\{T_z:z\in\gr\})$Ê
from Section \ref{sec-ldp-pol}.
  Fix any $\ell\ge 1$ and 
let  $V(\w,z_{1,\ell})=-\log\pi_{0,z_1}(\w)$ to put RWRE in the polymer framework.
Then $\pres_1(-V)=-\log\abs{\range}$.
The necessary  assumption is now 
\beq   \text{  $\abs{\log\pi_{0,z}}\in\Ll$ for each $z\in\range$. }  \label{rwreL}\eeq
The commonly used RWRE assumption of
{\sl uniform ellipticity}, namely the existence of $\kappa>0$ such that
 $\P\{\pi_{0,z}\ge\kappa\}=1$ for  
$z\in\range$,  
  implies \eqref{rwreL}. 

Under assumption \eqref{rwreL}  Theorems  \ref{th:ldp RWRP} and 
\ref{lev3thm} are valid for RWRE and give quenched weak LDPs for the distributions   
$P^\w_0\{ R_n^{\oneell} \in\cdot\,\}$  and 
	$P^\w_0\{ R_n^{\oneinfty} \in\cdot\,\}$.   Note
 though that for $\ell\ge 2$,  $Q_{n,0}^{V,\w}\{R_n^{\oneell}\in B\}$  is  not exactly equal to 
$P^\w_0\{ R_n^{\oneell} \in B\}$  because under  $Q_{n,0}^{V,\w}$ steps 
$Z_k$ for $k>n$ are taken from kernel $\refp$.  This difference vanishes in the limit
due to $\log\pi_{0,z}(\w)\in L^1(\P)$. 
These  LDPs take care  of  cases
of RWRE not covered by \cite{Ras-Sep-10-}, namely those walks for which
$0$ does not lie in the relative interior of the convex hull $\Uset$  of $\range$.

 For RWRE 
the rate function  $\Iqell^V$ in Theorem \ref{th:ldp RWRP}   can be expressed 
directly as the lower semicontinuous regularization of 
an entropy. Indeed, let $\overline V(\w,z_{1,\ell})=-\log\pi_{0,z_\ell}(T_{x_{\ell-1}}\w)$. The difference between using potential $\overline V$ and potential $V$
is only in finitely many terms in the exponent. Thus
%$F(\wz,z)+\log\pi_{0,z_\ell}(T_{x_{\ell-1}}\w) - \log\pi_{0,z_1}(\w)$ is in class $\K$ if, and only if, $F\in\K$,
%we have that $\pres_\ell(g-V)=\unK_\ell(g-V)=\unK_\ell(g-\overline V)=\pres_\ell(g-\overline V)$ for all $g\in\unif(\bigom_\ell)$. 
$\pres_\ell(g-V)=\pres_\ell(g-\overline V)$ for all $g\in\unif(\bigom_\ell)$. 
Then \eqref{lower-rate-rwrerp1}
shows that  $\Iqell^V=\Iqell^{\overline V}$. The latter rate is the lower semicontinuous regularization of $\ratell^{\overline V}$ in \eqref{lower-rate-rwrerp2},
which itself equals  $\ratell$ from \eqref{Helldef} with $\hat p_\ell$ replaced with the kernel $\pr(\wz,\Sopr_z\wz)=\pi_{0,z}(T_{x_\ell}\w)$ of the Markov chain 
$(T_{X_k}\w,Z_{k+1,k+\ell})$ under $P^\w_0$. 
%Indeed,  in \eqref{Helldef} replace $\hat p_\ell$ with  the kernel  
%$\pr(\wz,\Sopr_z\wz)=\pi_{0,z}(T_{x_\ell}\w)$ of the Markov chain 
%$(T_{X_k}\w,Z_{k+1,k+\ell})$ under $P^\w_0$. This introduces in \eqref{lower-rate-rwrerp2}
%the term $\log|\range|+\int \log\pi_{0,z_\ell}(T_{x_{\ell-1}}\w)\,\mu(d\wz)$.
%Since $F(\wz,z)= \log\pi_{0,z}(T_{x_\ell}\w) - \log\pi_{0,z_1}(\w)$ is in class $\K$, one can show that
%this extra term equals $\log|\range|-E^\mu[V]$, which cancels out in \eqref{lower-rate-rwrerp2} leaving just an entropy, but
%now relative to the new kernel $\pr$.
%Can also see this directly, because using $V$ in \eqref{lower-rate-rwrerp2} directly gives $\ratell(\mu)$ plus the difference
%\int \log\pi_{0,z_\ell}(T_{x_{\ell-1}}\w) d\mu(\wz) - \int\log\pi_{z_1}(\w) d\mu(\wz) which vanishes if $\mu$ has finite entropy 
%all limits of R_n must clearly satisfy this, and this is a closed set. so measures outside this set have infinite rate.
%also, this just says that \mu has equal marginals: \int F(\w,z_1) \mu(d\wz) = \int F(T_{x_i}\w,z_{i+1}) \mu(d\wz) for i=0,...,\ell-1. if not, var characterization shows \mu has
%infinite entropy
By Lemma 6.1 of \cite{Ras-Sep-10-} same is true of the level 3 rate
$\Iqr^V$ under the additional assumption that $\Omega$ is a compact space. 
We refer to \cite{Ras-Sep-10-} for this and 
some  other  properties of  $\Iqr^V$.

If $\Omega$ is compact, these weak LDPs are of course
  full LDPs, that is, the upper bound holds
 for all closed sets.  For  RWRE with finite $\range$ the natural canonical 
 choice of $\Omega$ is compact:  in   the setting of Example \ref{ex:RWRE} 
take  $\Omega=\cP^{\Z^d}$ with generic elements $\w=(\w_x)_{x\in\Z^d}$
and  
$p(\w)=\w_0$ projection at the origin.  

If $\Omega$ is compact  we can project
the LDP of Theorem  \ref{th:ldp RWRP}  to the level of the walk to obtain the following statements.
The limiting logarithmic moment generating function 
\beq
\lambda(t)= \lim_{n\to\infty} \frac1n \log E_0^\w[e^{\,t\cdot X_n}], \quad
t\in\R^d
\label{level1lmgf} \eeq
  exists a.s.  Its convex conjugate  
\[ \lambda^ *(\zeta)=\sup_{t\in\R^d}\{\zeta\cdot t-\lambda(t)\},\quad \zeta\in\R^d,  \]
is the rate function for the LDP of the 
distributions  $P_0^\w\{n^{-1}X_n\in\cdot\}$ on $\R^d$.  
For walks without ellipticity, in particular for walks with $0\notin\Uset$,
even this quenched position-level LDP is new.  It has been proved in the past only in a
neighborhood of the limiting velocity \cite{Yil-09-aop}.  

\bigskip
 
%%      ---------------------------------------------------------------------
%%      ------------------------- APPENDIX (OPTIONAL) -----------------------
%%      ---------------------------------------------------------------------
        
%%      If you have one appendix, uncomment the line \appendix and add
%%      a \section{ *** APPENDIX TITLE ***}. If you have more than
%%      one, uncomment the line \appendices and add a \section{ ***
%%      APPENDIX TITLE ***} command for each appendix title.

\appendix
\appendices

%%      Type body of appendix/-ices here.
%It remains to prove the remaining lemmas from Section \ref{} in the appendices.  
In the following appendices we invoke the ergodic theorem a few times.
By that we mean the multidimensional ergodic theorem; see for example Theorem 14.A8 in \cite{Geo-88}.

\section{Some auxiliary lemmas } 
\label{app-pressure} 

In this appendix   $\range$ is a finite subset of $\Z^d$, 
 $\gr$   the additive subgroup of $\Z^d$ generated by  $\range$, and $\Uset$  the convex hull of $\range$  in $\R^d$. 

\begin{lemma}\label{lm-convex}   Let $\xi\in\Q^d\cap\Uset$.
Then there exist rational  coefficients $\alpha_z\ge 0$
such that  $\sum_{z\in\range} \alpha_z=1$ and  $\xi=\sum_{z\in\range} \alpha_z z$. 
\end{lemma}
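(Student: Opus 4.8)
The statement is a rational version of Carath\'eodory's/the standard fact that any point of a convex hull is a convex combination of the generators; the only new content is that when $\xi$ is rational, the coefficients $\alpha_z$ can be taken rational. The plan is to run the classical reduction argument while tracking rationality. First I would note that since $\xi\in\Uset$, the linear system
\[
\sum_{z\in\range}\alpha_z\,z=\xi,\qquad \sum_{z\in\range}\alpha_z=1,\qquad \alpha_z\ge0\ \ \forall z\in\range,
\]
is feasible over $\R$. This is a system of $d+1$ linear equations in $|\range|$ unknowns with \emph{rational} data (the entries of the $z\in\range$ are integers and $\xi\in\Q^d$), together with the sign constraints $\alpha_z\ge0$. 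So the feasible region is a nonempty rational polyhedron, and the claim reduces to: a nonempty rational polyhedron that is pointed (which it is, being contained in the compact simplex-like set $\{\alpha\ge0,\sum\alpha_z=1\}$, hence bounded) has a rational point.

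Concretely I would argue as follows. Take any real feasible solution $(\alpha_z)_{z\in\range}$. If the vectors $\{(z,1):z\in\range,\ \alpha_z>0\}\subset\R^{d+1}$ are linearly independent, then $(\alpha_z)$ is the \emph{unique} solution of the corresponding square-ish subsystem $\sum_{z:\alpha_z>0}\alpha_z(z,1)=(\xi,1)$ with the other coordinates set to $0$; since that subsystem has rational coefficients and a rational right-hand side and a unique solution, that solution is rational by Cramer's rule, and it is the $(\alpha_z)$ we started with — done. If instead those vectors are linearly dependent, there is a nonzero vector $(\beta_z)$ supported on $\{z:\alpha_z>0\}$ with $\sum_z\beta_z z=0$ and $\sum_z\beta_z=0$; then $\alpha_z(t)=\alpha_z+t\beta_z$ still satisfies the two equality constraints for every $t$, and by choosing $t$ to be the largest (in absolute value, with the appropriate sign) value keeping all $\alpha_z(t)\ge0$ we drive at least one more coordinate to $0$, strictly shrinking the support while staying feasible. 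Iterating this finitely many times (the support is finite) lands us in the linearly independent case, which yields a rational solution.

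The argument is entirely standard, so I do not anticipate a genuine obstacle; the one point requiring a little care is making sure the "pivoting" step terminates — i.e.\ that we can always strictly reduce the support — which is exactly why we move along a direction in the kernel of the equality constraints and stop at the first coordinate hitting $0$. An alternative, even shorter, route is simply to quote that the vertices of a rational polyhedron are rational (they are unique solutions of rational square subsystems), and the feasible region here has at least one vertex because it is nonempty and bounded; I would likely present the self-contained pivoting version to keep the appendix elementary. Either way one then also reads off $\sum_z\alpha_z=1$ and $\alpha_z\ge0$ directly from the constraints, completing the proof.
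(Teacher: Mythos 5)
Your proposal is correct, and the route differs from the paper's in its mechanics though not in its core ideas. The paper first invokes Carath\'eodory's theorem to reduce to the case where $\range$ consists of affinely independent lattice points, then argues rationality by augmenting $\{\zhat_1-\zhat_0,\dotsc,\zhat_n-\zhat_0\}$ to an integer basis of $\R^d$ and observing that the inverse of an integer matrix is rational, so that applying this inverse to $\xi-\zhat_0$ reads off the rational coefficients. You instead give a self-contained pivoting argument (which is essentially a proof of Carath\'eodory's theorem, tracking the rational right-hand side) to shrink the support to a linearly independent set in $\R^{d+1}$, and then extract rationality via Cramer's rule on a rational square subsystem, or equivalently by quoting that vertices of a bounded rational polyhedron are rational. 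Both arguments are sound; yours is a bit more self-contained and phrases the reduction in the standard LP/pivoting language, while the paper's is a touch shorter because it offloads the combinatorial reduction to a cited theorem and handles rationality by an explicit change of basis. Either presentation would serve the appendix equally well.
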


\begin{proof}
Suppose first that $\range=\{\zhat_0,\dotsc, \zhat_n\}$ for   affinely independent points 
$\zhat_0,\dotsc, \zhat_n$. This means that the vectors $\zhat_1-\zhat_0,\dotsc, \zhat_n-\zhat_0$ are 
linearly independent in $\R^d$, and then necessarily $n\le d$.  
Augment this set to a basis $\{b_1=\zhat_1-\zhat_0,\dotsc, b_n=\zhat_n-\zhat_0, b_{n+1},\dotsc, b_d\}$ of $\R^d$ 
where $b_{n+1},\dotsc, b_d$ are  also integer vectors (for example, by including a suitable 
set of  $d-n$  standard basis vectors).   Let $A$ be the unique invertible linear transformation 
such that $Ab_i=e_i$ for $1\le i\le d$.  In the standard basis the matrix of $A$ is the inverse of the matrix
$B=[b_1,\dotsc,b_d]$, hence this matrix has rational entries. %as can be seen from $B^{-1}= (\det B)^{-1}B^T$. 

Now let $\xi=\sum_{i=0}^n \alpha_i \zhat_i$ be a representation of $\xi$ as a  convex 
combination of $\zhat_0,\dotsc,\zhat_n$.    Then   $\xi-\zhat_0=\sum_{i=1}^n \alpha_i (\zhat_i-\zhat_0)$,
and after an application of $A$,  $A\xi-A\zhat_0=\sum_{i=1}^n \alpha_i e_i$.  The vector on the
left has rational coordinates by the assumptions and by what was just said about $A$. 
The vector on the right is $[\alpha_1,\dotsc, \alpha_n,0,\dotsc,0]^T$.  Hence the coefficients 
$\alpha_1,\dotsc, \alpha_n$ are rational, and so is also $\alpha_0=1-\sum_{i=1}^n \alpha_i$. 

Now consider the case of a general $\range$.  
By Carath\'eodory's theorem,  every point in the convex hull of $\range$ is a convex
combination of $d+1$ or fewer affinely independent points of $\range$ \cite[Corollary~17.1.1]{Roc-70}. 
Thus the argument given above covers the general case. 
\end{proof}

The next  simple corollary characterizes the existence of a loop.

\begin{corollary}\label{cor:loops}
The existence of a loop (i.e.\ $z_{1,m}\in\range^m$ with
$z_1+\cdots+z_m=0$) is equivalent to $0\in\Uset$.
\end{corollary}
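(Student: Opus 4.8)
The plan is to prove the two implications separately; the forward direction is immediate, and the reverse direction rests entirely on Lemma~\ref{lm-convex}.

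First, suppose a loop exists, i.e.\ $z_{1,m}\in\range^m$ with $m\ge1$ and $z_1+\cdots+z_m=0$. Then I would simply write $0=m^{-1}\sum_{i=1}^m z_i$, which exhibits the origin as a convex combination of the points $z_1,\dots,z_m\in\range$: the coefficients $m^{-1}$ are nonnegative and sum to one, and repetitions among the $z_i$ do no harm. Hence $0\in\Uset$.

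Conversely, suppose $0\in\Uset$. Since $0\in\Q^d\cap\Uset$, Lemma~\ref{lm-convex} supplies rational coefficients $\alpha_z\ge0$, $z\in\range$, with $\sum_{z\in\range}\alpha_z=1$ and $\sum_{z\in\range}\alpha_z z=0$. Choose a common denominator $N\in\N$ so that every $N\alpha_z$ is a nonnegative integer; then $\sum_{z}N\alpha_z=N\ge1$. Form the step sequence $z_{1,m}$ of length $m=N$ that uses each $z\in\range$ exactly $N\alpha_z$ times, in any order. Its total displacement is $z_1+\cdots+z_m=\sum_{z\in\range}(N\alpha_z)z=N\sum_{z\in\range}\alpha_z z=0$, so it is a loop.

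There is no real obstacle here. The only point worth flagging is that an arbitrary convex representation of the origin could in principle have irrational weights, which would block the passage to an integer (hence genuinely combinatorial) loop; rationality of the coefficients is precisely what Lemma~\ref{lm-convex} guarantees, so the corollary follows at once.
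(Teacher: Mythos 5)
Your proof is correct and is exactly the argument the paper intends, since it states the result as a ``simple corollary'' of Lemma~\ref{lm-convex} without writing out details: the forward direction is the trivial convex combination $0=m^{-1}\sum z_i$, and the reverse direction uses the rationality from Lemma~\ref{lm-convex} to clear denominators and produce an integer loop. Nothing to add.
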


%\begin{proof}
%If a loop exists, then $0$ is written as a convex combination of elements of $\range$ and thus belongs to $\Uset$.   On the other hand, if $0\in\Uset$ then it can be written as a rational convex combination of elements of $\range$ and thus a loop exists.
%\end{proof}

This corollary expresses  the irreducibility assumption used in \cite{Ras-Sep-10-} in
terms of  the convex hull of $\range$.  

\begin{corollary}\label{cor:nestling}
  There is a  path from $0$ to each $y\in\gr$ with steps from $\range$ 
if and only if    $0$ is in the relative interior of $\Uset$.   
 \end{corollary}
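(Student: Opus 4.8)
The plan is to reduce everything to Lemma \ref{lm-convex} and Corollary \ref{cor:loops}, working separately on the two implications. Recall that $\gr$ is the additive subgroup of $\Z^d$ generated by $\range$, so the relative interior $\ri\,\Uset$ is taken inside the affine span of $\range$, which is a coset of the linear span of $\gr$ in $\R^d$. First I would dispose of the degenerate case where $\range=\{0\}$: then $\gr=\{0\}$, $\Uset=\{0\}$, $0\in\ri\,\Uset$ trivially, and there is a (length-$0$ or length-$m$) path from $0$ to the only element $0$ of $\gr$, so both sides hold. Hence assume $\range$ contains a nonzero step.

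For the direction ``$0\in\ri\,\Uset$ implies reachability,'' suppose $0$ lies in the relative interior of $\Uset$. Fix $y\in\gr$; since $\range$ generates $\gr$, write $y=\sum_{z\in\range} n_z z$ with $n_z\in\Z$ (not necessarily nonnegative). By Corollary \ref{cor:loops}, $0\in\Uset$ gives a loop $w_1+\cdots+w_m=0$ with $w_{1,m}\in\range^m$; more to the point, $0\in\ri\,\Uset$ together with Lemma \ref{lm-convex} produces, for each basis-type obstruction, enough ``slack'' to cancel negative coefficients. Concretely, I would argue that $0\in\ri\,\Uset$ forces every $z\in\range$ to appear in \emph{some} loop: the segment from $z$ to $0$ inside $\Uset$ can be extended slightly past $0$ (using the relative-interior property) to a point $-\e z'$ that is still a rational convex combination of $\range$, and clearing denominators via Lemma \ref{lm-convex} exhibits $z$-steps plus other $\range$-steps summing to $0$, i.e.\ a loop through a $z$-step. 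Concatenating, for each $z$ with $n_z<0$ we can replace the $|n_z|$ missing $z$-steps by traversing the complementary part of such loops $|n_z|$ times; the net effect is a genuine path $0=x_0,x_1,\dots,x_N=y$ with all increments in $\range$.

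For the converse, suppose $0\notin\ri\,\Uset$. Then $0$ lies on the relative boundary (or outside) of $\Uset$, so by the supporting hyperplane theorem applied within the affine span of $\range$ there is a vector $\uhat$, nonzero on the linear span of $\gr$, with $z\cdot\uhat\ge 0$ for all $z\in\range$ and $z_0\cdot\uhat>0$ for at least one $z_0\in\range$. Any path $0=x_0,\dots,x_N$ with steps in $\range$ then satisfies $x_N\cdot\uhat=\sum_{i=1}^N z_i\cdot\uhat\ge 0$, and I claim we can pick $y\in\gr$ with $y\cdot\uhat<0$: indeed $z_0\cdot\uhat>0$ and $z_0\in\gr$, so $y=-z_0$ works. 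Thus $y$ is not reachable from $0$, and reachability of all of $\gr$ fails. The main obstacle is making the ``cancel negative coefficients using loops'' step in the forward direction fully rigorous — specifically verifying that $0\in\ri\,\Uset$ really does put every generator $z\in\range$ on a loop (equivalently, that $-\e z$ stays in $\Uset$ for small rational $\e>0$) — but this is exactly the kind of relative-interior argument that Lemma \ref{lm-convex} and Corollary \ref{cor:loops} are set up to handle, so the write-up should be short.
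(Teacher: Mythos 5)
Your proof is correct, but it takes a genuinely different route from the paper's. The paper proves the corollary as a single chain of equivalences: reachability of all of $\gr$ $\iff$ reachability of $-z$ for each $z\in\range$ $\iff$ existence of a loop through every $z\in\range$ $\iff$ $0$ is a rational convex combination of $\range$ with all coefficients strictly positive (via Lemma \ref{lm-convex}) $\iff$ $-\e z\in\Uset$ for each $z\in\range$ and small $\e>0$ $\iff$ $0\in\ri\Uset$ (via Theorem 6.4 in \cite{Roc-70}). By contrast, you split into two implications and invoke the supporting/separating hyperplane theorem for the harder ``only if'' direction, producing a functional $\uhat$ with $z\cdot\uhat\ge 0$ on $\range$ and strict on some $z_0$, hence an obstruction to reaching $-z_0$. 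This is a clean and geometrically transparent argument, and it correctly sidesteps Rockafellar's Theorem 6.4; it does need the small check that $\uhat$ is not identically zero on the affine hull (which you address via ``$\Uset$ not contained in the hyperplane''). For the forward direction your idea is the same as the paper's, namely using the relative-interior condition together with Lemma \ref{lm-convex} to produce a loop passing through each $z\in\range$ and then splicing loops to cancel negative integer coefficients; the only informality is that your loop through $z$ may use several $z$-steps, so one should say explicitly that splitting the loop at one $z$-step and translating by $-z$ exhibits $-z$ as reachable — this is easy but should be written out. The paper's chain of equivalences is a bit more economical and avoids the hyperplane theorem; your version makes the geometric mechanism behind the ``only if'' direction more visible. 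Either write-up would be acceptable.
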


\begin{proof}
Each $y\in\gr$  is reachable from $0$ if and only if  $-x$ is reachable from $0$ for each 
$x\in\range$.  This   is equivalent to the existence of an identity 
 $0=x_1+\dotsm+x_m$ where each $x_i$ is in $\range$ and 
each $z\in\range$ appears
 at least once among the $x_i$'s.   Equivalently,   we can write  $0$   as a convex 
combination of   $\range$ so that each $z\in\range$ has a  positive 
rational coefficient.   Using Lemma \ref{lm-convex}, this in turn is equivalent to the following statement:
for each $z\in\range$,  $-\e z\in\Uset$ for small enough $\e>0$.  
By Theorem.~6.4 in \cite{Roc-70} this is the same as $0\in\ri\Uset$. 
 \end{proof}

% Earlier proof:  Suppose  $0\in \inter \Uset$ and pick any $z\in\range$. For some $t\in(0,1)\cap\Q$ the point $\xi=-(t^{-1}-1)z$ lies also in  $\inter \Uset$. Then   $0=t\xi+(1-t)z$.  Apply Lemma \ref{lm-convex} to $\xi$ to represent  $0$   as a rational convex combination   that gives $z$ positive weight. Combining such expressions we can write  $0$   as a convex  combination of   $\range$ so that each $z\in\range$ has a  positive rational coefficient.   Multiplying away the denominators produces a   loop $x+z_1+\cdots+z_m=0$.  
 
This lemma  gives sufficient conditions for membership in class $\Ll$ of
Definition \ref{cL-def}.

\begin{lemma}\label{lm-L}  Let    $(\Omega,\kS,\P,\{T_x:x\in\gr\})$
be a measurable ergodic  dynamical system.   Let $0\le g\in L^1(\P)$.
  Assume one
of the conditions  {\rm(a)}--{\rm(d)} below.  
\begin{itemize}
\item[{\rm(a)}] $g$ is bounded. 
\item[{\rm(b)}] $d=1$. 
%\item[{\rm(c)}] $d=2$, $G_x$ is i.i.d., and $E[G_0^2]<\infty$.
\item[{\rm(c)}] $d\ge2$. There exist $r\in(0,\infty)$ and $p>d$ such that $\E[g^p]<\infty$ and $\{g\circ T_{x_i}: i=1,\dotsc,m\}$ are i.i.d.\ whenever $\abs{x_i-x_j}\ge r$ 
for all $i\ne j$. 
\item[{\rm(d)}]  $d\ge 2$.   There exist $a>d$ and $p>ad/(a-d)$ such that $\E[g^p]<\infty$ and for each $z\in\range\setminus\{0\}$ and large $k\in\N$
\begin{align}\label{cond:strong-mixing}
\sup_{\substack{A\in\sigma(g\circ T_x:\,x\cdot z\le0)\\[2pt]B\in\sigma(g\circ T_x:\,x\cdot z\ge k)}}|\P(A\cap B)-\P(A)\P(B)|\le k^{-a}.
\end{align}  
\end{itemize}
  Then, for each $z\in\range\setminus\{0\}$ 
 \beq%\begin{align*}
\lim_{\e\to 0}\;\varlimsup_{n\to\infty} \; \max_{x\in\gr: \abs{x}\le n}\; \frac1n \sum_{i=0}^{\e n} g\circ T_{x+iz} =0  \quad\text{$\P$-a.s.}
%\le \e\rho_z(\w) \quad\text{for $\P$-a.e.~$\w$.}
%\end{align*} 
\label{appA-9}\eeq
\end{lemma}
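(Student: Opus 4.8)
The plan is to prove \eqref{appA-9} separately under each of the four hypotheses, since the mechanisms are quite different. In every case we are bounding, uniformly over starting points $x$ in a ball of radius $n$, the average of $g$ over an arithmetic progression $x, x+z, \dots, x+\lfloor\e n\rfloor z$ in the fixed direction $z$; the key is that there are only polynomially many (in $n$) such starting points $x\in\gr$ with $|x|\le n$, roughly $Cn^d$ of them, so a union bound costs only a factor $n^d$ and will be absorbed as long as we have a strong enough tail estimate on a single progression average.

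Case (a) is immediate: if $g\le M$ then the average is at most $M\e n/n\le M\e$, which goes to $0$ with $\e$. Case (b), $d=1$, is the classical observation that an $L^1$ ergodic function has $n^{-1}g\circ T_n\to 0$ a.s.\ (from $\frac1n\sum_{i\le n}g\circ T_i$ converging), and more is true: one shows $n^{-1}\max_{0\le i\le n}g\circ T_i\to 0$ a.s.\ by Borel--Cantelli using $\sum_n \P(g\circ T_0 > \delta n)<\infty$, which follows from $g\in L^1$. Since in $d=1$ with a nonzero $z\in\range$ the points $\{x+iz: |x|\le n, 0\le i\le \e n\}$ all lie within $O(n)$ of the origin along a single orbit, the progression average is dominated by $n^{-1}$ times a sum of at most $\e n+1$ terms each bounded by $\max_{|j|\le Cn} g\circ T_j = o(n)$, and one checks the resulting bound is $o(1)$ uniformly; alternatively bound the double max by $\e\cdot\max_j(\text{running average})$ plus small corrections. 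This case needs only the maximal-ergodic / $L^1$ tail argument.

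Cases (c) and (d) are the substance. Here the strategy is a quantitative large-deviation (Markov/moment) estimate on the progression average. Fix $z$ and a progression of length $m\sim\e n$. Under (c), sufficiently separated shifts of $g$ are i.i.d., so a progression $x,x+z,\dots,x+(m-1)z$ decomposes into at most $r$ sub-progressions each consisting of i.i.d.\ copies of $g$ (spacing $\ge r$), and on each we apply the $L^p$, $p>d$, bound: by the Marcinkiewicz--Zygmund or Fuk--Nagaev inequality (or even a crude $p$-th moment bound on the sum of i.i.d.\ mean-something variables), $\P(\text{sub-progression sum}\ge \theta m)$ is summable against the union-bound factor $n^d$ once $p>d$, after first subtracting the mean $\E g$ (which contributes at most $\e\,\E g\to 0$). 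Under (d) one replaces independence by the polynomial strong-mixing rate $k^{-a}$: use a block decomposition of the progression into big blocks separated by gaps of length $k=k(n)$, couple the blocks to an independent sequence paying a total mixing cost controlled by $a>d$, and use the $L^p$ moment with $p>ad/(a-d)$ to make the block sums' tail summable against $n^d$; the precise thresholds $a>d$ and $p>ad/(a-d)$ are exactly what makes the union bound over $n^d$ starting points and over the $k$-dependent number of blocks close. In both cases the final step is: for each fixed $\e$, Borel--Cantelli gives $\varlimsup_n \max_x \frac1n\sum_{i\le\e n} g\circ T_{x+iz}\le \e\,\E g + o(1)$ a.s.; then let $\e\to 0$.

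The main obstacle I expect is the bookkeeping in case (d): getting an honest tail bound of the form $\P(\frac1m\sum_{i=0}^{m-1} g\circ T_{x+iz} \ge \delta)\le e^{-c n}$ or at least $\le n^{-d-1}$ summably, uniformly in $x$, from only polynomial mixing plus a polynomial ($L^p$) moment. This requires carefully choosing the block length $k=k(n)$ (polynomial in $n$) to balance three competing costs — the mixing error $\sim (n/k)\,k^{-a}$ from replacing the mixing sequence by an independent one, the number $n/k$ of blocks entering the moment bound, and the union-bound factor $n^d$ — and then invoking a moment inequality for sums of (nearly) independent, only $L^p$-integrable variables. The $d=1$ case hides a similar but milder subtlety, namely upgrading $n^{-1}\sum_{i\le n}g\circ T_i\to \E g$ to control of the running maximum, which again is the $L^1$-tail Borel--Cantelli argument. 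Cases (a) and (b) I would dispatch in a few lines; (c) and (d) would each get a short paragraph with the block/moment estimate stated and the threshold arithmetic checked.
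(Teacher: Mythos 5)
Your overall strategy—union bound over polynomially many starting points, a tail estimate for a single progression, Borel--Cantelli, and for the mixing case a block argument—is the right family of ideas, and (a) is correct. But there are two genuine gaps.

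\textbf{Case (b).} Your first estimate does not close: bounding each of the $\lfloor\e n\rfloor+1$ summands by $\max_{|j|\le Cn}g\circ T_j=o(n)$ and dividing by $n$ gives $\e\cdot o(n)$, which is not $o(1)$. The ``alternatively'' sentence gestures at the right thing but leaves the crux (the ``small corrections'') unexplained. The paper instead decomposes $g=\E(g\mid\mathcal I_z)+\bar g$, where $\mathcal I_z$ is the $T_z$-invariant $\sigma$-algebra. The first piece is constant along the progression, so the whole progression average is $\le(\e+\tfrac1n)\max_{0\le r<a}\E(g\mid\mathcal I_z)\circ T_{rs}$, an a.s.\ finite random constant times $\e$. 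For the mean-zero piece $\bar g$, Birkhoff applied to $T_z$ gives $\sup_{|\ell|\le n}\bigl|\tfrac1n\sum_{i\le n}\bar g\circ T_{(\ell+i)z}\bigr|\to 0$. No Borel--Cantelli or $L^1$-tail argument is needed; just conditional expectation plus the ergodic theorem.

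\textbf{Cases (c), (d).} The quantitative bookkeeping you defer to actually breaks with a naive $n^d$-point union bound. Under the stated moment and mixing exponents, the single-progression tail one can prove is of order $n^{1-b}$ with $b=ap/(a+p)$, and the hypotheses only give $b>d$, not $b>d+1$; so $n^{d}\cdot n^{1-b}=n^{d+1-b}$ need not even tend to $0$, let alone be summable. The paper gets around this by two monotonicity reductions, both exploiting $g\ge 0$: (i) restrict the max to a set $A_n$ of only $Cn^{d-1}\e^{-1}$ starting points, at the cost of doubling $\e$ (a progression started at $x$ and extended to length $2\e n$ covers the one started at $x+jz$ for $1\le j<\e n/2$); (ii) pass to a polynomial subsequence $n_m=m^\gamma$ with $\gamma>(b-d)^{-1}$, valid since $n_{m+1}/n_m\to1$ and $g\ge 0$. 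Only then does Borel--Cantelli close. You also propose to rebuild the block/coupling argument for (d) from scratch; the paper instead cites Rio's Fuk--Nagaev inequality for strongly mixing sequences and derives (c) as a special case of (d). The block route is a legitimate alternative, but as written it is a plan, not a proof, and without the two reductions above the exponents do not balance as you assert.

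In short: (a) fine; (b) needs the $\E(g\mid\mathcal I_z)$ decomposition rather than the $o(n)$ bound; (c)/(d) need the two $g\ge0$ reductions (restricted starting set, polynomial subsequence) to make Borel--Cantelli work under the stated thresholds.
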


\begin{proof} Part (a) is immediate.   
%For the rest it suffices to prove that $\max_{|x|\le n} \frac1n \big|\sum_{i=0}^{n} \Gbar_{x+iz}\big|$ converges to $0$ almost surely, where $\Gbar_x=G_x-\E[G_x]$.

\medskip

For (b) let $s\in\N$ be such that $\gr=\{ns: n\in\Z\}$.  Fix $z=as$ and let 
$\bar g=g-\E(g\,\vert\,\mathcal I_z)$ where $\mathcal I_z$ is the $\sigma$-algebra of
events invariant under $T_z$.  By $T_{as}$-invariance 
\[  \max_{-n\le j\le n} \frac1n \sum_{i=0}^{\e n} \E(g\,\vert\,\mathcal I_z)\circ T_{js+ias}
\le  (\e+\tfrac1n) \max_{0\le j< a} \E(g\,\vert\,\mathcal I_z)\circ T_{js} \quad\text{$\P$-a.s.}\]
By the ergodic theorem 
\[ \varlimsup_{n\to\infty} \; \max_{\abs{\ell}\le n}\;  \biggl\lvert  \frac1n \sum_{i=0}^{ n} \bar g\circ T_{\ell z+iz}   \biggr\rvert = 0 \quad\text{$\P$-a.s.}\]
This limit is not changed by taking  a finite maximum over the shifts by $T_{js}$, $0\le j< a$. 

Part (c) follows from part (d).

Fix $z$ for part (d).   First two reductions.  
 (i) The maximum over $x$ in  \eqref{appA-9} can be restricted to
a set $A_n$ of size $\abs{A_n}\le Cn^{d-1}\e^{-1}$, at the expense of 
doubling $\e$ in the upper summation limit. The reason is that $g\ge0$ and if $x'=x+jz$  for 
 some $1\le j< n\e/2$, then the $2n\e$-sum started at $x$ covers the 
$n\e$-sum started at $x'$.  

(ii) It suffices to consider a subsequence $n_m=m^\gamma$ for any
fixed  $\gamma>0$ because $n_{m+1}/n_m\to 1$ and $g\ge 0$.  

Since constants satisfy  \eqref{appA-9} we can   replace $g$ with  $\bar g=g-\E[ g]$.
Let 
$S^x_n=\sum_{i=0}^n \bar g\circ T_{x+i z}$.  
Equation \eqref{cond:strong-mixing} and the translation invariance of $\P$ imply strong mixing as defined by \cite{Rio-95}.
Then applying Theorem 6 therein  with $u=n^{-b}$, $r$ large enough, and $t=\delta n/(cr)$
we get a generalization of the Fuk-Nagaev inequality to square-integrable, mean-zero strongly mixing random variables.
%(Theorem 6 in \cite{Rio-95}), 
This implies that for fixed $\e,\delta>0$,  
$  \P\{ \absa{S^x_{n\e}} > n\delta\} \le C(\e,\delta)n^{1-b}$ with $b=ap/(a+p)>d$.
By  a straightforward union bound 
\begin{align*}
\P\Bigl\{ \max_{x\in A_n} \Bigl\lvert  \sum_{i=0}^{n\e} \bar g\circ T_{x+i z} \Bigr\rvert
 > n\delta \Bigr\}
\le    Cn^{d-1}\e^{-1}  \P\{ \absa{S^0_{n\e}} > n\delta\}  \le  C(\e,\delta)n^{d-b}.
\end{align*}
Along the subsequence $n_m=m^\gamma$ for $\gamma>(b-d)^{-1}$ the last
bound is summable.  We get $\P$-a.s.\ convergence to $0$ for each fixed $\e>0$ 
by the   Borel-Cantelli lemma. 
\end{proof}  

For a general ergodic system (a) cannot be improved.  For example, take $d=2$,
an i.i.d.\ sequence  $\{\w_{i,0}\}_{i\in\Z}$, and then set $\w_{i,j}=\w_{i,0}$.  
For $z=e_2$,  $n^{-1}\sum_{j=0}^{n\e} \abs{\w_{x+(0,j)}}$ $\ge  \e \abs{\w_x}$ and consequently
the limit in $n$ in    \eqref{appA-9}  blows up unless $\w_{i,j}$ is a bounded process. 

If the mixing in part (d) above is faster than any polynomial, then we can take $a\to\infty$ and the condition becomes  $p>d$. 
%In particular, this is  the case if there exists $r\in(0,\infty)$ such that  $\{g\circ T_{x_i}: i=1,\dotsc,m\}$ are i.i.d.\ whenever $\abs{x_i-x_j}\ge r$ 
%for all $i\ne j$. 
%However, if mixing is polynomial at rate $k^{-a}$ then there is a trade-off between $p$ and $a$.
%In this i.i.d.\ case 
Part (c)   is close to optimal.  If $\E[g^d]=\infty$ then 
$n^{-1}\max_{\abs{x}\le n} g\circ T_{x}$ blows up by the second  Borel-Cantelli lemma. 
Currently we do not know if $p\ge d$ is sufficient in (c).

\section{Weak LDP through a projective limit}
We describe a small alteration of  the projective limit LDP.  
Let   $\mathcal X$ and   $\mathcal X_j$, $j\in\mathbb N$, be metric spaces
with  
continuous maps  $g_j:\mathcal X\to\mathcal X_j$ and 
$g_{j,i}:\mathcal X_j\to\mathcal X_i$ for $i<j$ such that 
$g_i=g_{j,i}\circ g_j$ and $g_{k,i}=g_{j,i}\circ g_{k,j}$.  Let $\{\mu_n\}$ be a sequence of Borel probability
measures on $\mathcal X$, and define $\mu^j_n=\mu_n\circ g_j^{-1}$ on $\mathcal X_j$. 
Let $I_j:\mathcal X_j\to[0,\infty]$ be lower semicontinuous.   Define 
$I(x)=\sup_j I_j(g_j(x))$ for $x\in\mathcal X$.

\begin{theorem}  {\rm (i)}  Suppose that  for all $j$, $I_j\circ g_{j+1,j}\le I_{j+1}$ and  $I_j$ satisfies
 the   large deviation upper  bound  for compact sets in $\mathcal X_j$. 
 Then $I$ satisfies
 the   large deviation upper  bound  for compact sets in $\mathcal X$. 
 
 {\rm (ii)}  Assume
that 
$\mathcal U=\{ g_j^{-1}(U_j):  j\in\mathbb N,  U_j\subseteq\mathcal X_j  \text{ open} \}  $
is a base for the topology of $\mathcal X$. 
Suppose that  for all $j$,    $I_j$ satisfies
 the   large deviation lower  bound  for open sets in $\mathcal X_j$. 
 Then $I$ satisfies
 the   large deviation lower  bound  for open  sets in $\mathcal X$. 
 
\label{proj-ldp-thm}
\end{theorem}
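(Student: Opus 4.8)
\textbf{Proof proposal for Theorem \ref{proj-ldp-thm}.}

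The plan is to treat the two halves separately, since they use the projective structure in different ways. For part (i), the key observation is that a compact set $C\subseteq\mathcal X$ maps forward under each $g_j$ to a compact set $g_j(C)\subseteq\mathcal X_j$, and that $\mu_n(C)\le\mu_n^j(g_j(C))$ because $C\subseteq g_j^{-1}(g_j(C))$. Applying the compact-set upper bound at level $j$ gives
\[
\varlimsup_n n^{-1}\log\mu_n(C)\le\varlimsup_n n^{-1}\log\mu_n^j(g_j(C))\le-\inf_{y\in g_j(C)}I_j(y)=-\inf_{x\in C}I_j(g_j(x)).
\]
Since this holds for every $j$, I would take the supremum over $j$ on the right. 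The one point requiring a small argument is the exchange of $\sup_j$ and $\inf_{x\in C}$: here I use that $j\mapsto I_j(g_j(x))$ is nondecreasing, which follows from the hypothesis $I_j\circ g_{j+1,j}\le I_{j+1}$ together with $g_j=g_{j+1,j}\circ g_{j+1}$. A monotone increasing family of lower semicontinuous functions on the compact set $C$ satisfies $\sup_j\inf_C(I_j\circ g_j)=\inf_C\sup_j(I_j\circ g_j)=\inf_C I$ by a standard compactness argument (Dini-type: the sets $\{I_j\circ g_j\le a\}$ are closed and decreasing in $j$, so if their intersection over $j$ misses a point it misses a neighborhood, etc.). This yields the compact-set upper bound for $I$.

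For part (ii), the lower bound is genuinely local, so the projective structure enters only through the hypothesis that $\mathcal U$ is a base. It suffices to prove the lower bound for each basic open set $g_j^{-1}(U_j)$ with $U_j\subseteq\mathcal X_j$ open, because for a general open $O\subseteq\mathcal X$ and any $x\in O$ one can pick a basic neighborhood $g_j^{-1}(U_j)\subseteq O$ containing $x$, obtain
\[
\varliminf_n n^{-1}\log\mu_n(O)\ge\varliminf_n n^{-1}\log\mu_n(g_j^{-1}(U_j))\ge-\inf_{y\in U_j}I_j(y)\ge-I_j(g_j(x))\ge-I(x),
\]
and then take the supremum over $x\in O$, i.e. $-\inf_{x\in O}I(x)$. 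The middle inequality is exactly the level-$j$ lower bound applied to $U_j$, using $\mu_n(g_j^{-1}(U_j))=\mu_n^j(U_j)$; the last step uses $I(x)=\sup_k I_k(g_k(x))\ge I_j(g_j(x))$ and $g_j(x)\in U_j$ since $x\in g_j^{-1}(U_j)$.

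I expect the main obstacle to be the $\sup$–$\inf$ interchange in part (i); everything else is essentially bookkeeping with the consistency relations $g_i=g_{j,i}\circ g_j$ and the definitions. The interchange is where compactness of $C$ is used in an essential way, and it is the reason the upper bound is only claimed for compact sets rather than all closed sets — exponential tightness would be needed to promote it, which is precisely what Theorem \ref{iid-lev3-thm} supplies in the i.i.d.\ directed case. I should also note that lower semicontinuity of the limiting $I$ is automatic: it is a supremum of the compositions $I_j\circ g_j$, each lower semicontinuous as a composition of a continuous map with a lower semicontinuous function, and a supremum of lower semicontinuous functions is lower semicontinuous.
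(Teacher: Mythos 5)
Your proposal is correct and follows essentially the same route as the paper: part (i) reduces to a $\sup$–$\inf$ interchange over the compact set, which the paper proves by covering $C$ with finitely many open sets $\{I_{j(x)}\circ g_{j(x)}>c\}$ and using monotonicity to find a single dominating index; your Dini-type argument with nested closed sublevel sets is the standard dual phrasing of that same compactness step. Part (ii) is also correct, and the paper simply declares it straightforward and omits it.
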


 \begin{proof}   Part (ii) is straightforward.  We prove 
part  (i).  Let $A\subseteq\mathcal X$ be compact. 
 Since $g_j(A)$ is compact in $\mathcal X_j$  and  $g_j^{-1}(g_j(A))\supseteq A$,  
\begin{align*}
\varlimsup\, n^{-1}\log\mu_n(A) &\le \varlimsup\, n^{-1}\log\mu^j_n(g_j(A)) \le 
-\inf_{y\in g_j(A)} I_j(y)\\
&=-\inf_{x\in A} I_j(g_j(x))
\end{align*} 
from which 
\begin{align*}
\varlimsup\, n^{-1}\log\mu_n(A) \le   -\sup_j \inf_{x\in A} I_j(g_j(x)). 
\end{align*} 

Next we claim a minimax property from the assumption of monotonicity:  
\begin{equation}
\sup_j \inf_{x\in A} I_j(g_j(x)) = \inf_{x\in A} \sup_j  I_j(g_j(x)) \equiv \inf_{x\in A}  I(x). 
\label{pr-lim:minimax}\end{equation}
Inequality $\le$ is  obviously true. To show $\ge$, let 
$c< \inf_{x\in A} \sup_j  I_j(g_j(x))$.   Then each $x\in A$ has an index $j(x)$ such
that  $I_{j(x)}(g_{j(x)}(x))>c$.   The set
$D_x=\{ z\in\mathcal X:  I_{j(x)}(g_{j(x)}(z))>c\}$ is open by the continuity of $g_j$ and 
lower semicontinuity of $I_j$.  Cover $A$ with finitely many:
$A\subseteq D_{x_1}\cup \dotsm\cup D_{x_k}$.  Fix $j\ge j(x_1)\vee\dotsm\vee j(x_k)$.
Then if $x\in A$ pick $\ell$ such that $x\in D_{x_\ell}$, and we have 
\[   I_j(g_j(x)) \ge   I_{j(x_\ell)} \bigl( g_{j,\, j(x_\ell)}(g_j(x))\bigr) 
=  I_{j(x_\ell)} \bigl( g_{j(x_\ell)}(x)\bigr) > c.    \]
Thus  $\inf_{x\in A} I_j(g_j(x))\ge c$. We have proved \eqref{pr-lim:minimax} and 
thereby the upper large deviation bound for $A$. 
\end{proof}

\section{Proofs of Lemmas %\ref{L1 limit},  
\ref{CLASS K} %,  \ref{lm:Sk}, 
and \ref{F-lemma}}\label{app:pf lm class K}

Standing assumptions in this section are the same as in Section \ref{sec:var rep}:
$(\Omega,\kS,\P,\{T_z:z\in\gr\})$ is a measurable ergodic dynamical system and,  as throughout the paper,  $\range$ is
an arbitrary finite subset of $\Z^d$ that generates the additive  group $\gr$.
Throughout this section  $\ell\ge0$ is a fixed integer. 
  $C$ denotes a chameleon 
constant that  can change   from term to term and  only depends on
$\range$, $\ell$, and $d$. 
 In order to avoid working on a sublattice, we will assume throughout this appendix that $\range$ generates $\Z^d$ as a group.
This does not cause any loss of generality. The additive  group  $\gr$
generated by $\range$ is  linearly  isomorphic to $\Z^{d'}$ for some $d'\le d$ \cite[p.~65-66]{Spi-76} and  we can transport the model to   $\Z^{d'}$.  

A crucial tool will be the path integral of a function $F\in\K$. The main idea is that due to the closed loop property
these functions are gradient-like.

For $\ell$-tuples $\ztil_{1,\ell}, \zbar_{1,\ell}\in\range^\ell$ we 
write  $\xtil_\ell=\ztil_1+\dotsc+\ztil_\ell$ and
 $\xbar_\ell=\zbar_1+\dotsc+\zbar_\ell$.
%Since $\range$ generates $\Z^d$ as a group, there always exists
We say that there exists a path from $(y,\tilde z_{1,\ell})$ to $(x,z_{1,\ell})$ 
when there exist 
$a_1,\dotsc,a_{m}\in\range$ such that 
%	\[  y+\xtil_\ell+ a_1+\dotsm+a_{m-\ell}=x. \]
%The definition is independent of $z_{1,\ell}$ but for symmetry of language it seems
%sensible to keep it in the statement. The case $m=\ell$ is admissible also and then
%$y+\xtil_\ell=x$.   Then if we set $a_{m-\ell+1, m}=z_{1,\ell}$,
the composition $\Sopr_{a_m}\circ\dotsm\circ\Sopr_{a_1}$ takes 
$(T_y\w, \ztil_{1,\ell})$ to $(T_x\w,z_{1,\ell})$ for all $\w\in\Omega$.  
This is equivalent to the pair of equations 
\[ 
    y+\xtil_\ell+a_1+\dotsm+a_{m-\ell}=x  \quad\text{and}\quad  a_{m-\ell+1,m}=z_{1,\ell}. 
 \]

%Paths can be concatenated.  If there is a path from $(y,\ztil_{1,\ell})$ to $(x,z_{1,\ell})$
%and from $(u, \zbar_{1,\ell})$ to $(y,\ztil_{1,\ell})$, then we have 
%	\[  y+\xtil_\ell+ a_1+\dotsm+a_{m-\ell}=x 
%		\quad\text{and}\quad u+\xbar_\ell+ b_1+\dotsm+b_{n-\ell}=y. \]
%Taking $b_{n-\ell+1,n}=\tilde z_{1,\ell}$ we then have 
%	\[   u+\xbar_\ell+ b_1+\dotsm+b_{n} + a_1+\dotsm+a_{m-\ell}=x \]
%and there is a path from $(u, \zbar_{1,\ell})$ to $(x,z_{1,\ell})$. 

For any two points $(x,z_{1,\ell})$ and $(\xbar,\zbar_{1,\ell})$ and any $\ztil_{1,\ell}$
there exists a point $y\in\Z^d$ such that from $(y,\ztil_{1,\ell})$  there is a path to  both $(x,z_{1,\ell})$ 
and $(\xbar,\zbar_{1,\ell})$.  For this, find  first 
$\abar_1,\dotsc,\abar_{m-\ell}$ and $a_1,\dotsc,a_{n-\ell}\in\range$ such that 
	\[  \xbar- x = (\abar_1+\dotsm +\abar_{m-\ell})-(  a_1+\dotsm+a_{n-\ell}) \]
so that 
	\[y'=\xbar-  (\abar_1+\dotsm +\abar_{m-\ell}) = x  -(  a_1+\dotsm+a_{n-\ell})\]
and then take $y= y'-\xtil_\ell$.   By induction, for any finite number of points there is a common starting point from
which there exists a path to each of the chosen points.

Now fix  a   measurable function $F:\bigom_\ell\times\range\to\R$ 
that satisfies   the closed
loop property {\rm (iii)} of Definition \ref{cK-def}.  
If there is a path $(a_i)_{i=1}^m$ from $(y,\ztil_{1,\ell})$ to $(x,z_{1,\ell})$, set $\wz_0=(T_y\w, \ztil_{1,\ell})$, 
$\wz_i=\Sopr_{a_i}\wz_{i-1}$ for $i=1,\dotsc, m$ so that $\wz_m=(T_x\w, z_{1,\ell})$,
and then   
	\begin{align}
  		L(\w, (y,\ztil_{1,\ell}), (x,z_{1,\ell}))= \sum_{i=0}^{m-1} F(\wz_i, a_{i+1}).  \label{defL}
	\end{align}
By the closed loop property $L(\w, (y,\ztil_{1,\ell}), (x,z_{1,\ell}))$ is independent of the
path chosen. 
We also admit   an empty path that gives 
\[  L(\w, (x,z_{1,\ell}), (x,z_{1,\ell})) = 0.  \]
If $a_1,\dotsc,a_{m}$  work for  $(y,\ztil_{1,\ell})$ and  $(x,z_{1,\ell})$,
then these steps work also for  $(y+u,\ztil_{1,\ell})$ and  $(x+u,z_{1,\ell})$. 
The effect on the right-hand side of \eqref{defL} is  to shift $\w$ by $u$, and
consequently 
	\begin{align}  
	L(T_u\w, (y,\ztil_{1,\ell}), (x,z_{1,\ell}))=L(\w, (y+u,\ztil_{1,\ell}), (x+u,z_{1,\ell})). 
	\label{shiftL}
	\end{align}

Next define $f:\Omega\times\range^{2\ell}\times\Z^d\to\R$ by
	\begin{align}   
		f(\w, z_{1,\ell}, \zbar_{1,\ell}, x)= L(\w, (y,\ztil_{1,\ell}), (x,\zbar_{1,\ell}))
			- L(\w, (y,\ztil_{1,\ell}), (0,z_{1,\ell}))  \label{deff}
	\end{align}
for any $(y,\ztil_{1,\ell})$ with  a path to  both $(0,z_{1,\ell})$ and $(x,\zbar_{1,\ell})$.   
This definition is  independent of the choice of  $(y,\ztil_{1,\ell})$, again by the closed loop property.
%For suppose $(y',\tilde z'_{1,\ell})$ is an alternative choice.  
%Pick $(y'',\tilde z''_{1,\ell})$ with a path to both $(y,\tilde z_{1,\ell})$ and
% $(y',\tilde z'_{1,\ell})$.  Then the closed loop property implies 
%\begin{align*} 
%&L(\w, (y'',\tilde z''_{1,\ell}), (y,\tilde z_{1,\ell})) + L(\w, (y,\tilde z_{1,\ell}), (0,  z_{1,\ell})) \\
%&\qquad =  L(\w, (y'',\tilde z''_{1,\ell}), (y',\tilde z'_{1,\ell})) + L(\w, (y',\tilde z'_{1,\ell}), (0,  z_{1,\ell})) 
%\intertext{and}
%&L(\w, (y'',\tilde z''_{1,\ell}), (y,\tilde z_{1,\ell})) + L(\w, (y,\tilde z_{1,\ell}), (x, \bar z_{1,\ell})) \\
%&\qquad 
%=  L(\w, (y'',\tilde z''_{1,\ell}), (y',\tilde z'_{1,\ell})) + L(\w, (y',\tilde z'_{1,\ell}), (x, \bar z_{1,\ell})) 
%\end{align*} 
%from which follows 
%\begin{align*}  &L(\w, (y,\tilde z_{1,\ell}), (x,\bar z_{1,\ell}))
%- L(\w, (y,\tilde z_{1,\ell}), (0,z_{1,\ell})) \\
%&\qquad = L(\w, (y',\tilde z'_{1,\ell}), (x,\bar z_{1,\ell}))
%- L(\w, (y',\tilde z'_{1,\ell}), (0,z_{1,\ell})).  \end{align*}
%Thus  definition \eqref{deff}  is independent of the choice
%of  $(y,\tilde z_{1,\ell})$. 

Here are some basic properties of $f$.

\begin{lemma}\label{f properties}  Let $F(\cdot\,, z_{1,\ell}, z)\in L^1(\P)$ for each 
$(z_{1,\ell}, z)$ and satisfy the closed
loop property {\rm (iii)} of Definition \ref{cK-def}.  
\begin{itemize}
\item[{\rm (a)}]  There exists a constant $C$ depending only on $d$, $\ell$, and $R=\max\{|z|:z\in\range\}$ such that
for all
$z_{1,\ell},\zbar_{1,\ell}\in\range^\ell$, $x\in\Z^d$, and $\P$-a.e.\ $\w$
 	\[|f(\w,z_{1,\ell},\zbar_{1,\ell},x)|\le\sum_{b:|b|\le C(|x|+1)}\max_{\ztil_{1,\ell}\in\range^\ell}\max_{z\in\range}|F(T_b\w,\ztil_{1,\ell},z)|.\]
	 In particular,  $f\in L^1(\P)$ for all $(z_{1,\ell}, \bar z_{1,\ell}, x)$.  
\item[{\rm (b)}]  For  $z_{1,\ell},\zbar_{1,\ell},\ztil_{1,\ell}\in\range^\ell$, $x,\xbar\in\Z^d$, and $\P$-a.e.\ $\w$, 
 \begin{align*} 
f(\w,z_{1,\ell},\ztil_{1,\ell},\xbar)=f(\w,z_{1,\ell},\zbar_{1,\ell},x)+f(T_x\w,\zbar_{1,\ell},\ztil_{1,\ell},\xbar-x). 
\end{align*}
\item[{\rm (c)}]   Assume additionally that $F$ satisfies the mean zero property  {\rm (ii)} of Definition \ref{cK-def}.   Then for any $\zbar_{1,\ell}\in\range^\ell$ and $x\in\Z^d$, $\E[f(\w, \zbar_{1,\ell},  \zbar_{1,\ell}, x)]=0.$
\end{itemize}
\end{lemma}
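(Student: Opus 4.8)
The plan is to read off all three statements from the two structural facts about the path integral established just above the lemma: path-independence of $L$, which is the content of the closed-loop property (iii) of Definition \ref{cK-def}, and the shift-covariance \eqref{shiftL}; together with the fact (proved above) that any finite family of states in $\bigom_\ell\times\Z^d$ admits a common ancestor. Throughout I use freely that the right side of \eqref{deff} does not depend on the ancestor $(y,\ztil_{1,\ell})$ chosen to define $f$.

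For part (a) I would realize $f(\w,z_{1,\ell},\zbar_{1,\ell},x)$ as the difference of two $L$-integrals issuing from a \emph{single} common ancestor $(y,\ztil_{1,\ell})$ of $(0,z_{1,\ell})$ and $(x,\zbar_{1,\ell})$, chosen efficiently. Running the common-ancestor construction preceding the lemma with connecting step sequences of near-minimal length produces such a $y$ with $\abs y\le C(\abs x+1)$ together with connecting paths of length $\le C(\abs x+1)$ that stay inside the ball $\{b:\abs b\le C(\abs x+1)\}$; arranging these spatial paths to be self-avoiding apart from a bounded overlap, each of the $\le C(\abs x+1)$ edge-terms $F(\wz_i,a_{i+1})$ in \eqref{defL} occurs at a distinct site $b$ in that ball and is bounded by $\max_{\ztil_{1,\ell}\in\range^\ell}\max_{z\in\range}\abs{F(T_b\w,\ztil_{1,\ell},z)}$. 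Summing gives the displayed bound, and membership $f\in L^1(\P)$ then follows from the hypothesis $F(\cdot\,,\ztil_{1,\ell},z)\in L^1(\P)$. I expect this to be the one genuinely delicate step, precisely because the asserted inequality carries no multiplicative constant, so the connecting paths must be routed so that lattice sites are revisited only in a controlled way; everything else here is bookkeeping with $R=\max\{\abs z:z\in\range\}$.

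Part (b) is then formal. Fix a common ancestor $(y,\ztil_{1,\ell})$ reaching all three states $(0,z_{1,\ell})$, $(x,\zbar_{1,\ell})$, $(\xbar,\ztil_{1,\ell})$, and expand each $f$ in the claim through this ancestor via \eqref{deff}; the two occurrences of $L(\w,(y,\ztil_{1,\ell}),(0,z_{1,\ell}))$ cancel, leaving
\[
f(\w,z_{1,\ell},\ztil_{1,\ell},\xbar)-f(\w,z_{1,\ell},\zbar_{1,\ell},x)=L(\w,(y,\ztil_{1,\ell}),(\xbar,\ztil_{1,\ell}))-L(\w,(y,\ztil_{1,\ell}),(x,\zbar_{1,\ell})).
\]
On the other hand, by \eqref{shiftL} the point $(y-x,\ztil_{1,\ell})$ is a common ancestor, in the environment $T_x\w$, of $(0,\zbar_{1,\ell})$ and $(\xbar-x,\ztil_{1,\ell})$; computing $f(T_x\w,\zbar_{1,\ell},\ztil_{1,\ell},\xbar-x)$ from it and applying \eqref{shiftL} to each $L$-term turns it into exactly the right side above. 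This yields (b).

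For part (c) I would combine (b) with the mean-zero property (ii). Set $\phi(x)=\E[f(\w,\zbar_{1,\ell},\zbar_{1,\ell},x)]$, finite by (a). Taking $\E$ in the identity of (b) with $z_{1,\ell}=\ztil_{1,\ell}=\zbar_{1,\ell}$ and using $T$-invariance of $\P$ gives $\phi(\xbar)=\phi(x)+\phi(\xbar-x)$, so $\phi$ is additive on $\Z^d$. Next, whenever an actual path joins $(0,\zbar_{1,\ell})$ to $(x,\zbar_{1,\ell})$, concatenating an ancestor-to-$(0,\zbar_{1,\ell})$ path with it and invoking path-independence gives $f(\w,\zbar_{1,\ell},\zbar_{1,\ell},x)=L(\w,(0,\zbar_{1,\ell}),(x,\zbar_{1,\ell}))$, and this path integral is precisely the quantity whose expectation the mean-zero property (ii) declares to vanish, so $\phi(x)=0$ for every such $x$. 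The reachable set of such $x$ contains $\zbar_1+\dots+\zbar_\ell$ (take the committed steps) and $\zbar_1+\dots+\zbar_\ell+z$ for each $z\in\range$ (take the committed steps plus one further cycle), so additivity forces $\phi(z)=0$ for all $z\in\range$, and since $\range$ generates $\Z^d$ (the standing assumption of this appendix) we conclude $\phi\equiv0$. The only point needing care is that $(0,\zbar_{1,\ell})$ need not reach $(x,\zbar_{1,\ell})$ for every $x$ — e.g.\ in a forbidden direction — but it does reach enough points to drive the additivity argument.
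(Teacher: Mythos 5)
Parts (a) and (b) of your proof follow the paper's own argument closely: (a) by routing two paths from a single common ancestor inside a ball of radius $C(|x|+1)$, and (b) by expanding all three $f$-terms through one common ancestor and invoking the shift identity \eqref{shiftL}. The only delicate point in (a) is the one you rightly flag: the displayed bound carries no multiplicative constant, so the connecting paths must visit each lattice site essentially once. You assert this can be arranged but do not check it; the paper is equally silent on the issue (it constructs $x=P-N$ from fixed $\range$-representations of $\pm e_i$ and remarks only that the resulting paths ``stay inside a ball of radius $C(|x|+1)$''). Since all downstream uses of (a) only require an $L^1(\P)$ majorant for $f$, a bounded site-multiplicity would do just as well, so neither version is in real danger, but neither fully closes this small gap.

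For (c) you take a genuinely different route. The paper notes directly that, with $(y,\zbar_{1,\ell})$ a common ancestor of both $(0,\zbar_{1,\ell})$ and $(x,\zbar_{1,\ell})$, each of the two $L$-terms in $f(\w,\zbar_{1,\ell},\zbar_{1,\ell},x)$ is a path integral whose initial and terminal $z$-parts agree, so each has zero $\E$-mean immediately by property (ii); (c) follows with no further work. You instead prove additivity of $\phi(x)=\E[f(\w,\zbar_{1,\ell},\zbar_{1,\ell},x)]$ via (b) and translation invariance, observe that property (ii) kills $\phi$ on the set of $x$ reachable from $(0,\zbar_{1,\ell})$ with final $z$-part $\zbar_{1,\ell}$ (which includes $\xbar_\ell$ and $\xbar_\ell+z$ for all $z\in\range$), and then extend to all of $\Z^d$ by additivity and the appendix's standing convention that $\range$ generates $\Z^d$. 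This is correct, and it is a nice observation that additivity plus vanishing on a generating set suffices, but it is longer than necessary: the paper's choice of a common ancestor with $z$-part $\zbar_{1,\ell}$ converts both $L$-terms into property-(ii) sums outright and thereby obviates both (b) and the generation-of-$\Z^d$ step.
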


\begin{proof} %[Proof of Lemma \ref{f properties}]  
Let $e_1,\dotsc,e_d$ be the canonical basis of $\R^d$.
For each $1\le i\le d$, there exist nonnegative integers $n_i^\pm$ and $(a_{i,j}^\pm)_{j=1}^{n_i^\pm}$ from $\range$ such that
	\[e_i=a^+_{i,1}+\cdots+a^+_{i,n^+_i}-a^-_{i,1}-\cdots-a^-_{i,n^-_i}.\]
Write $x=\sum_{i=1}^d b_i \epsilon_i e_i$ with $b_i\ge0$ and $\epsilon_i\in\{-1,+1\}$. Then, 
	\[x=\sum_{i=1}^d\sum_{j=1}^{n_i^{\epsilon_i}}b_i a^{\epsilon_i}_{i,j}-\sum_{i=1}^d\sum_{j=1}^{n^{-\epsilon_i}_i}b_ia^{-\epsilon_i}_{i,j}.\]
One can thus find a $y$ that has paths to both $0$ and $x$ that stay inside a ball of radius $C(|x|+1)$. This proves (a).
  
To prove (b),  
let  $(y,\zhat_{1,\ell})$ have paths to $(-x,z_{1,\ell})$, $(0,\zbar_{1,\ell})$, and $(\xbar-x,\ztil_{1,\ell})$.
Use the definition of $f$ \eqref{deff} and the shift property \eqref{shiftL} to write
	\begin{align*}
	&f(T_x\w,\zbar_{1,\ell},\ztil_{1,\ell},\xbar-x)\\
	&\qquad=L(T_x\w,(y,\zhat_{1,\ell}),(\xbar-x,\ztil_{1,\ell}))-L(T_x\w,(y,\zhat_{1,\ell}),(0,\zbar_{1,\ell}))\\
	&\qquad=L(\w,(y+x,\zhat_{1,\ell}),(\xbar,\ztil_{1,\ell}))-L(\w,(y+x,\zhat_{1,\ell}),(x,\zbar_{1,\ell}))\\
	&\qquad=\big[L(\w,(y+x,\zhat_{1,\ell}),(\xbar,\ztil_{1,\ell}))-L(\w,(y+x,\zhat_{1,\ell}),(0,z_{1,\ell}))\big]\\
	&\qquad\qquad-\big[L(\w,(y+x,\zhat_{1,\ell}),(x,\zbar_{1,\ell}))-L(\w,(y+x,\zhat_{1,\ell}),(0,z_{1,\ell}))\big]\\
	&\qquad=f(\w,z_{1,\ell},\ztil_{1,\ell},\xbar)-f(\w,z_{1,\ell},\zbar_{1,\ell},x).
	\end{align*}

For (c), let $y$ be so that 
from $(y,\bar z_{1,\ell})$  there is a path to  both $(x,\bar z_{1,\ell})$ 
and $(0,\bar z_{1,\ell})$.  Then 
\[   f(\w, \bar z_{1,\ell}, \bar z_{1,\ell}, x)= L(\w, (y,\bar z_{1,\ell}), (x,\bar z_{1,\ell}))
- L(\w, (y,\bar z_{1,\ell}), (0, \bar z_{1,\ell})).  \]
Both $L$-terms above equal sums 
$\sum_{i=0}^{m-1} F(\wz_i, a_{i+1})$ where $\wz_0=(T_y\w, \bar z_{1,\ell})$
and $\wz_m=(T_u\w, \bar z_{1,\ell})$ with $u=x$ or $u=0$.  Both have 
zero $\E$-mean by property (ii) of Definition \ref{cK-def}. 
\end{proof}

\begin{remark}
Part (b)  above shows that $f$ is a path integral of $F$ or, alternatively, that $F$ is a gradient of $f$. More precisely,
\[F(\w,z_{1,\ell},z)=f(\w,\zbar_{1,\ell},\Sopr_z z_{1,\ell},z_1)-f(\w,\zbar_{1,\ell},z_{1,\ell},0),\]
for all $z_{1,\ell}$, $\zbar_{1,\ell}\in\range^\ell$, $z\in\range$, and $\P$-a.e.\ $\w$. ($\Sopr_z$ acts on $\range^\ell$ in the obvious way.)
\end{remark}

\begin{lemma}\label{L1 limit} 
Let $F\in\K$. Then, there exists a sequence of bounded
measurable functions $h_k:\bigom_\ell\to\R$ such that $\E[|h_k(\Sopr_z\wz)-h_k(\wz)-F(\wz,z)|]\to0$
for all $z_{1,\ell}\in\range^\ell$ and $z\in\range$.
\end{lemma}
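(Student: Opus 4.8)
The plan is to manufacture the $h_k$ from the path integral $f$ of $F$ constructed in Lemma~\ref{f properties}. I would fix once and for all a reference tuple $\zbar_{1,\ell}\in\range^\ell$ and set, for $\wz=(\w,z_{1,\ell})\in\bigom_\ell$ and $u\in\Z^d$,
\[
\Psi(\wz)=f(\w,\zbar_{1,\ell},z_{1,\ell},0),\qquad \psi_u(\w)=f(\w,\zbar_{1,\ell},\zbar_{1,\ell},u).
\]
By Lemma~\ref{f properties}(a) and the $T_z$-invariance of $\P$, for each fixed $z_{1,\ell}\in\range^\ell$ and $z\in\range$ the maps $\w\mapsto\Psi(\w,z_{1,\ell})$, $\w\mapsto\Psi(\Sopr_z(\w,z_{1,\ell}))$ and $\w\mapsto\psi_u(\w)$ lie in $L^1(\P)$; by part (c) each $\psi_u$ has mean zero; and by part (b), $\psi_{u+v}(\w)=\psi_u(\w)+\psi_v(T_u\w)$, so $(\psi_u)_{u\in\Z^d}$ is a mean-zero additive cocycle. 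Combining the Remark after Lemma~\ref{f properties} with part (b) (used to split $f(\w,\zbar_{1,\ell},\Sopr_z z_{1,\ell},z_1)$ through the intermediate tuple $\zbar_{1,\ell}$ at position $z_1$) would give, for $\P$-a.e.\ $\w$ and all $z_{1,\ell}$, $z$,
\[
F(\wz,z)=\Psi(\Sopr_z\wz)-\Psi(\wz)+\psi_{z_1}(\w)
\]
(with $z_1$ replaced by $z$ and $\Psi\equiv 0$ when $\ell=0$). Thus $\Psi$ is a ``potential'' for $F$ up to the residual mean-zero cocycle $\w\mapsto\psi_{z_1}(\w)$, and the whole problem reduces to approximating $\psi$, in $L^1(\P)$, by coboundaries of bounded functions of $\w$.

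For that I would use box averaging and the multidimensional ergodic theorem. Put $\chi_N(\w)=-N^{-d}\sum_{v\in\{0,\dots,N-1\}^d}\psi_v(\w)\in L^1(\P)$. Applying the cocycle identity in the forms $\psi_v(T_z\w)=\psi_{z+v}(\w)-\psi_z(\w)$ and $\psi_{z+v}(\w)=\psi_v(\w)+\psi_z(T_v\w)$ yields the telescoping relation
\[
\chi_N(T_z\w)-\chi_N(\w)=\psi_z(\w)-N^{-d}\sum_{v\in\{0,\dots,N-1\}^d}\psi_z(T_v\w).
\]
Since $\psi_z\in L^1(\P)$ is mean zero and $\P$ is ergodic, the multidimensional ergodic theorem makes the box average tend to $0$ in $L^1(\P)$, hence $\chi_N\circ T_z-\chi_N\to\psi_z$ in $L^1(\P)$ for each $z\in\range$. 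Setting $h_N(\wz)=\Psi(\wz)+\chi_N(\w)$, so that $h_N(\cdot\,,z_{1,\ell})\in L^1(\P)$, one then has
\[
h_N(\Sopr_z\wz)-h_N(\wz)=\bigl[\Psi(\Sopr_z\wz)-\Psi(\wz)\bigr]+\bigl[\chi_N(T_{z_1}\w)-\chi_N(\w)\bigr]\longrightarrow F(\wz,z)
\]
in $L^1(\P)$ as $N\to\infty$, simultaneously for the finitely many pairs $(z_{1,\ell},z)$.

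Finally I would pass to bounded functions by truncation: let $h_{N,M}=(h_N\wedge M)\vee(-M)$, which is bounded and measurable on $\bigom_\ell$. Since $h_N(\cdot\,,z_{1,\ell})$ and $\w\mapsto h_N(\Sopr_z(\w,z_{1,\ell}))$ lie in $L^1(\P)$, dominated convergence gives $h_{N,M}(\Sopr_z\wz)-h_{N,M}(\wz)\to h_N(\Sopr_z\wz)-h_N(\wz)$ in $L^1(\P)$ as $M\to\infty$, again simultaneously over the finite index set; a diagonal choice $N_k\to\infty$ and then $M_k\to\infty$ produces $h_k:=h_{N_k,M_k}$ with $\E[\,|h_k(\Sopr_z\wz)-h_k(\wz)-F(\wz,z)|\,]<1/k$ for all $z_{1,\ell}\in\range^\ell$ and $z\in\range$, which is the assertion. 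The one genuinely substantive point — and the main obstacle — is recognizing that the path integral $\Psi$ is not an exact potential for $F$ but differs from one by a mean-zero additive cocycle, and that such a cocycle is annihilated in $L^1$ by Cesàro/box averaging via the ergodic theorem; the invertibility of the shifts $T_z$ is exactly what makes $\psi_u$ available for all $u\in\Z^d$ and the cocycle identity valid.
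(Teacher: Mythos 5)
Your construction is correct and, once unwound, is in substance the paper's own proof: using Lemma~\ref{f properties}(b) to expand $\Psi(\wz)+\chi_N(\w)$ shows your $h_N$ is just $-N^{-d}\sum_{v\in\{0,\dots,N-1\}^d}f(\w,z_{1,\ell},\zbar_{1,\ell},v)$, i.e.\ the paper's Ces\`aro-averaged $g_n$ up to the cosmetic choices of a box instead of a ball $\{|x|\le n\}$ and a single fixed reference tuple $\zbar_{1,\ell}$ instead of an average over all of them. The essential steps — telescoping via part~(b), mean zero of the residual via part~(c), the multidimensional ergodic theorem, and a final $L^1$ truncation to bounded $h_k$ — coincide with the paper's.
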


\begin{proof}[Proof of Lemma \ref{L1 limit}]
Starting with $F$,  denote its path integral by $f$ as above. Define
	\[g_n(\w,z_{1,\ell})=-|\range|^{-\ell}(2n+1)^{-d}\sum_{\zbar_{1,\ell}\in\range^\ell}\sum_{|x|\le n}f(\w,z_{1,\ell},\zbar_{1,\ell},x).\]
By part  (b) of Lemma \ref{f properties} 
\begin{align*}
&f(\w,z_{1,\ell},\zbar_{1,\ell},x)+f(T_x\w,\zbar_{1,\ell},\zbar_{1,\ell},z_1)=f(\w,z_{1,\ell},\zbar_{1,\ell},x+z_1)\\
&\qquad=f(\w,z_{1,\ell},\Sopr_z z_{1,\ell},z_1)+f(T_{z_1}\w,\Sopr_z z_{1,\ell},\zbar_{1,\ell},x).
\end{align*}
Consequently,  from the closed loop 
property alone,  
	\begin{align}
	&g_n(\Sopr_z\wz)-g_n(\wz)\\
	&\qquad=|\range|^{-\ell}(2n+1)^{-d}\sum_{\zbar_{1,\ell}\in\range^\ell}\sum_{|x|\le n}\big[f(\w,z_{1,\ell},\zbar_{1,\ell},x)\nn\\
	&\qquad\qquad\qquad\qquad\qquad\qquad\qquad\qquad-f(T_{z_1}\w,\Sopr_z z_{1,\ell},\zbar_{1,\ell},x)\big]\nn\\
	&\qquad=|\range|^{-\ell}(2n+1)^{-d}\sum_{\zbar_{1,\ell}\in\range^\ell}\sum_{|x|\le n}\big[f(\w,z_{1,\ell},\Sopr_z z_{1,\ell},z_1)\nn\\
	&\qquad\qquad\qquad\qquad\qquad\qquad\qquad\qquad-f(T_x\w,\zbar_{1,\ell},\zbar_{1,\ell},z_1)\big]\nn\\
	&\qquad=F(\wz,z)-|\range|^{-\ell}(2n+1)^{-d}\sum_{\zbar_{1,\ell}\in\range^\ell}\sum_{|x|\le n}f(T_x\w,\zbar_{1,\ell},\zbar_{1,\ell},z_1).\label{F-c}
	\end{align}
%just use (b) of Lemma B.3.  let \ztil=S_z z_{1,\ell}.
%
%f(\w,z_{1,\ell},\zbar,x)+f(T_x\w,\zbar,\zbar,z_1)
%f(\w,z_{1,\ell},\ztil,z_1)+f(T_{z_1}\w,\ztil,\zbar,x)
%are both equal to f(\w,z_{1,\ell},\zbar,x+z_1).
%
%the other one is similar (and uses the fact that F(\wz,z)=f(\w,z_{1,\ell},\ztil,z_1) )

	By parts (a) and (c) of Lemma \ref{f properties} and by the ergodic theorem we see that $F(\wz,z)$ is the $L^1(\P)$-limit of 
	$g_n(\Sopr_z\wz)-g_n(\wz)$ for each $z_{1,\ell}\in\range^\ell$ and $z\in\range$. 
	Finally, approximate  the
integrable $g_n$ with a bounded $h_n$ in $L^1(\P)$.
%	Although functions $g_n$ are not necessarily bounded we can find $a_n$ such tha $\E[|g_n|\one\{|g_n|>a_n\}]\le n^{-1}$ for all $z_{1,\ell}\in\range^\ell$. Then, functions $h_n=g_n\one\{|g_n|\le a_n\}$ do the job:	 	\[\E[|h_n(\Sopr_z\wz)-h_n(\wz)-F(\wz,z)|]\le\E[|g_n(\Sopr_z\wz)-g_n(\wz)-F(\wz,z)|]+2n^{-1}\mathop{\longrightarrow}_{n\to\infty}0.\qedhere\]
\end{proof}

\begin{proof}[Proof of Lemma \ref{CLASS K}]
The $L^1(\P)$ convergence to $0$ follows from Lemma \ref{L1 limit}.
Next, observe that for any $a_{1,n}$ that satisfies the properties in braces in the statement of the lemma,
the $F$-sum satisfies
\[\sum_{i=0}^{n-1} F(\wz_i, a_{i+1}) = f(\w, z_{1,\ell}, \zbar_{1,\ell}, \xhat_n(\xi)).\]  
Consequently the task is to show that $n^{-1}f(\w,z_{1,\ell},\zbar_{1,\ell},\xhat_n(\xi))$ has a limit $\P$-a.s.

%For $\xi\in\Q^d\cap\Uset$ and 
Recall that the definition of the path $\xhat_\centerdot(\xi)$ given above Lemma 
\ref{lm-pressure2} involved an integer $b=b(\xi)$ such that 
  $b\xi\in\Z^d$ and  $\xhat_{mb}(\xi)=mb\xi$ for all $m$. 
Using (b) of Lemma \ref{f properties} we have 
	\begin{align*}
	(mb)^{-1}f(\w,z_{1,\ell},\zbar_{1,\ell},mb\xi)&= (mb)^{-1}f(\w,z_{1,\ell},\zbar_{1,\ell},0)\\
	&\qquad\qquad+(mb)^{-1}\sum_{j=0}^{m-1}f(T_{jb\xi}\w,\zbar_{1,\ell},\zbar_{1,\ell},b\xi)
	\end{align*}
and by the ergodic theorem, the right-hand side has a $\P$-almost sure limit.
%But lemma \ref{L1 limit} implies that the left-hand side converges to $0$ strongly in $L^1(\P)$.
%Consequently, $(mb)^{-1}f(\w,z_{1,\ell},\zbar_{1,\ell},mb\xi)$ converges $\P$-almost surely to $0$ as $m\to\infty$.

Given $n$ choose $m_n$ so that $m_nb\le n<(m_n+1)b$.
By (b) and (a) of Lemma \ref{f properties} 
	\begin{align*}
	&n^{-1}|f(\w,z_{1,\ell},\zbar_{1,\ell},\xhat_n(\xi))-f(\w,z_{1,\ell},\zbar_{1,\ell},m_nb\xi)|\\ 
	&\qquad= n^{-1}|f(T_{m_nb\xi}\w,\zbar_{1,\ell},\zbar_{1,\ell},\xhat_n(\xi)-m_nb\xi)|\\
	 &\qquad\le (m_n b)^{-1} G(T_{m_nb\xi}\w)\mathop{\longrightarrow}_{n\to\infty}0\quad\text{$\P$-a.s.},
	 \end{align*}
where $G(\w)=\sum_{x:\abs{x}\le C(b|\xi|+1)}\max_{\ztil_{1,\ell}\in\range^\ell}\max_{z\in\range}|F(T_x\w,\ztil_{1,\ell},z)|$ is in $L^1(\P)$. 
\end{proof}

\begin{proof}[Proof of Lemma \ref{F-lemma}]
Fix $\e>0$ for the rest of the proof.
From \eqref{F-bound} we have that for $\P$-a.e.\ $\w$ and for all $z_{1,\ell}\in\range^\ell$ and $z\in\range$
	\[F^{(0)}_{k,\e}(\wz,z)\le C-\E[g(\wz)\,|\,\kS_{k}].\] 
By the fact that 
$g(\w,z_{1,\ell})\in L^1(\P)$ for all $z_{1,\ell}\in\range^\ell$
the right-hand side is uniformly integrable. Thus so is $F_{k,\e}^{(0),+}=\max(F^{(0)}_{k,\e},0)$.

Let $F^{(0),-}_{k,\e}=\max(-F^{(0)}_{k,\e},0)$. 
Observe that by the $T_z$-invariance of $\P$
	\begin{align*}%\label{mean zero k}
	\E[F^{(0)}_{k,\e}(\w,z_{1,\ell},z)]&=\E[h_{k,\e}(T_{z_1}\w,\Sopr_{z}z_{1,\ell})-h_{k,\e}(\w,z_{1,\ell})]\\
	&=\E[h_{k,\e}(\w,\Sopr_{z}z_{1,\ell})]-\E[h_{k,\e}(\w,z_{1,\ell})].
	\end{align*} 
Thus $F^{(0)}_{k,\e}$ satisfies the mean-zero property (ii) in Definition \ref{cK-def}. 
Letting $\wz_0=(\w,z_{1,\ell})$, $z_0=z$, $a_i=z_{i-1}$, and $\wz_i=\Sopr_{a_i}\wz_{i-1}$ for $i=1,\dotsc,\ell+1$,  
one has that
\[\E[F^{(0),-}_{k,\e}(\wz,z)]\le\sum_{i=0}^{\ell} \E[F_{k,\e}^{(0),-}(\wz_i,a_{i+1})]=\sum_{i=0}^{\ell} \E[F_{k,\e}^{(0),+}(\wz_i,a_{i+1})]\]
is bounded uniformly in $k$. We apply the following lemma to extract a uniformly integrable part leaving a small error.

\begin{lemma}[Lemma 4.3 of \cite{Kos-Var-08}]\label{lm:raghu} 
Let $\{g_n\}_{n\ge1}$  be a sequence of nonnegative functions such that 
$\sup_n E[g_n]\le C$. Then there is a subsequence 
$\{n_j\}_{j\ge1}$ and an increasing sequence $a_j\nearrow\infty$ such that $g_{n_j}\one\{g_{n_j}\le a_j\}$ is uniformly integrable
and $g_{n_j}\one\{g_{n_j}>a_j\}$ converges to $0$ in probability.
\end{lemma}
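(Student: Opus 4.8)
The statement to prove is Lemma \ref{lm:raghu}, attributed to Kosygina--Varadhan. The plan is to give the short self-contained argument behind it, since it is elementary measure theory.

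\textbf{Overall approach.} The idea is that a sequence $\{g_n\}$ bounded in $L^1$ may fail to be uniformly integrable only because a vanishing fraction of its mass escapes to infinity. One isolates that escaping mass by truncating at a level $a_j$ that grows slowly enough that the truncated parts become uniformly integrable, yet the level is reached rarely enough (because $\sup_n E[g_n]\le C$ forces $P(g_n>a)\le C/a$ by Markov) that the tail piece $g_{n_j}\one\{g_{n_j}>a_j\}$ tends to $0$ in probability along a suitable subsequence. The delicate point is that ``uniformly integrable'' is a statement about a whole family, so one cannot just pick $a_j$ pointwise in $j$; one needs a diagonal argument.

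\textbf{Key steps.} First I would recall the standard characterization: a family $\{h_\alpha\}$ is uniformly integrable iff $\lim_{M\to\infty}\sup_\alpha E[h_\alpha\one\{h_\alpha>M\}]=0$. Next, for each fixed truncation level $b$, the family $\{g_n\one\{g_n\le b\}:n\ge1\}$ is automatically uniformly integrable (it is bounded by $b$). The work is to let $b=b(n)\to\infty$ along a subsequence while preserving uniform integrability of the truncated pieces. Concretely, for each integer $m\ge1$ choose $a^{(m)}$ so large that the single-function tail bound $E[g_n\one\{a^{(m)}<g_n\}]$ is controlled; more precisely, observe
\[
E\big[g_n\one\{a<g_n\le 2a\}\big]\le 2a\,P(g_n>a)\le 2C,
\]
which is not yet small, so one must instead use that $E[g_n]\le C<\infty$ together with an exhaustion: for each $n$, $E[g_n\one\{g_n>a\}]\to0$ as $a\to\infty$, but not uniformly in $n$. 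To get around the lack of uniformity, extract a subsequence by a diagonal procedure. Enumerate $m=1,2,\dots$. Having chosen $n_1<\dots<n_{j-1}$, use that $E[g_{n}\one\{g_{n}>a\}]\to 0$ as $a\to\infty$ for the fixed index $n=j$ — wait, this still does not give a subsequence-independent bound. The correct device (as in Kosygina--Varadhan) is: by $\sup_n E[g_n]\le C$ and a diagonal/Komlós-type argument one may pass to a subsequence $\{n_j\}$ along which, for every fixed level $a$, the limit $\lim_j E[g_{n_j}\one\{g_{n_j}\le a\}]$ exists; then choose $a_j\nearrow\infty$ slowly enough that $E\big[g_{n_j}\one\{a_j<g_{n_j}\le a_i\}\big]$ is small for $i>j$, which forces $\{g_{n_j}\one\{g_{n_j}\le a_j\}\}_j$ to be uniformly integrable; and since $P(g_{n_j}>a_j)\le C/a_j\to 0$, the complementary piece $g_{n_j}\one\{g_{n_j}>a_j\}$ tends to $0$ in probability.

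\textbf{Cleanest route.} Rather than Komlós, I would argue as follows. For each $m\in\N$, since $\sup_n E[g_n]\le C$, choose $b_m\in\N$ with $C/b_m<2^{-m}$, so $P(g_n>b_m)<2^{-m}$ for all $n$. Now define, for each $n$, the function $m\mapsto \psi_n(m):=E[g_n\one\{g_n>b_m\}]$. Each $\psi_n$ is nonincreasing with $\psi_n(m)\to 0$ as $m\to\infty$, and $\psi_n(0)\le C$. The family $\{\psi_n\}$ of nonincreasing functions on $\N$ bounded by $C$ has, by a diagonal argument over the values at each fixed $m$, a subsequence $\{n_j\}$ along which $\psi_{n_j}(m)\to\psi(m)$ for every $m$, with $\psi$ nonincreasing; and one can arrange (by further thinning) that $\psi_{n_j}(j)\le 1/j$, say, since passing to the tail in $m$ makes each $\psi_{n_j}(m)$ eventually small. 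Set $a_j=b_j$. Then $a_j\nearrow\infty$, and for $M=a_i$ with $i\le j$ we have $E[g_{n_j}\one\{a_j\ge g_{n_j}>a_i\}]\le E[g_{n_j}\one\{g_{n_j}>b_i\}]=\psi_{n_j}(i)$, which is small once $i$ is large; this yields uniform integrability of $\{g_{n_j}\one\{g_{n_j}\le a_j\}\}_j$. Finally $P(g_{n_j}>a_j)=P(g_{n_j}>b_j)<2^{-j}\to 0$, so $g_{n_j}\one\{g_{n_j}>a_j\}\to 0$ in probability.

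\textbf{Main obstacle.} The only real subtlety — and where care is needed in writing it out — is the diagonalization that simultaneously (i) makes the truncation level $a_j$ grow to infinity, (ii) keeps the truncated sequence uniformly integrable as a family, and (iii) sends the tail to zero in probability; one must order the choices so that the level $a_j$ is chosen \emph{after} the subsequence index $n_j$ is fixed, using the already-established smallness of $\psi_{n_j}(m)$ for large $m$. Once the three requirements are disentangled in this order, each is individually routine. Since this is a cited lemma, in the paper I would either reproduce this short argument or simply refer to Lemma 4.3 of \cite{Kos-Var-08}.
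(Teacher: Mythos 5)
The paper itself does not prove this lemma: it is invoked by citation to Lemma 4.3 of \cite{Kos-Var-08}, so there is no in-paper argument to compare against. I will therefore evaluate your proposed proof on its own merits, and there is a genuine gap in the ``cleanest route.''

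The problematic step is the claim that, after the diagonal extraction making $\psi_{n_j}(m)\to\psi(m)$ for every $m$, one can ``arrange (by further thinning) that $\psi_{n_j}(j)\le 1/j$.'' This is false, and not just because $1/j$ is too demanding: no subsequence need satisfy $\psi_{n_j}(j)\to 0$ at all. Take $b_m$ as you defined it and put $g_n = b_{n+1}\one_{A_n}$ with $P(A_n)=1/b_{n+1}$, so $E[g_n]=1$. Then $\psi_n(m)=E[g_n\one\{g_n>b_m\}]=\one\{m\le n\}$, and since any subsequence has $n_j\ge j$, one gets $\psi_{n_j}(j)=1$ for every $j$ and every subsequence. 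The limiting profile is $\psi(m)\equiv 1$, so it does not vanish as $m\to\infty$ either. The heuristic ``passing to the tail in $m$ makes each $\psi_{n_j}(m)$ eventually small'' conflates sending $m\to\infty$ for fixed $n$ (true, by integrability of $g_n$) with the evaluation at the coupled pair $(n_j,j)$, which is a different and in general uncontrolled quantity.

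The second, related, problem is the estimate used for uniform integrability: you bound $E[g_{n_j}\one\{a_i<g_{n_j}\le a_j\}]\le\psi_{n_j}(i)$ and then assert this is small for large $i$ uniformly in $j$. But $\psi_{n_j}(i)$ need not be small -- in the example above it equals $1$ for all $i\le j$ -- even though the left-hand side is identically zero there. You have discarded the subtracted term: the exact identity is $E[g_{n_j}\one\{a_i<g_{n_j}\le a_j\}]=\psi_{n_j}(i)-\psi_{n_j}(m_j)$ where $a_j=b_{m_j}$, and it is the \emph{difference} that is small, not $\psi_{n_j}(i)$ alone. A correct argument must keep both terms: after the diagonal extraction one lets $\psi(\infty)=\lim_{m\to\infty}\psi(m)$ (which can be strictly positive, as above), and then chooses the truncation levels $a_j=b_{m_j}$ with $m_j\to\infty$ slowly enough -- specifically, slowly enough relative to the rate at which $\psi_{n_j}(m)\to\psi(m)$ in $j$ for each fixed $m$ -- so that $\psi_{n_j}(m_j)$ is within, say, $2^{-m_j}$ of $\psi(m_j)\ge\psi(\infty)$. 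Then for $i\le m_j$, $\psi_{n_j}(i)-\psi_{n_j}(m_j)\le (\psi(i)+2^{-i})-(\psi(\infty)-2^{-m_j})$, which is small uniformly in $j$ once $i$ is large; and the probability statement $P(g_{n_j}>a_j)\le C/a_j\to 0$ is free, as you noted. The crucial point is that $a_j$ must be chosen \emph{after} and \emph{depending on} the rate of convergence of $\psi_{n_j}(\cdot)$, not set to the fixed schedule $b_j$ independently of the extracted subsequence.
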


By the above lemma we can write $F^{(0),-}_{k,\e}=\Ftil^{(0)}_{k,\e}+R^{(0)}_{k,\e}$
such that along a subsequence $\Ftil^{(0)}_{k,\e}$ is uniformly integrable and $R_{k,\e}^{(0)}\ge0$ is $\kS_k$-measurable 
and converges to $0$ in $\P$-probability. One can then take a further subsequence along which $\Fhat^{(0)}_{k,\e}=F_{k,\e}^{(0),+}-\Ftil^{(0)}_{k,\e}$ converges in weak $L^1(\P)$   to   $\Fhat^{(0)}_\e$ and $R^{(0)}_{k,\e}\to0$  $\P$-a.s. 
(Uniform integrability gives sequential compactness in weak $L^1$; see Theorem 9 on page 292 of \cite{Dun-Sch-58}.)
We will always keep indexing subsequences by $k$.
Now we have the decomposition 
\begin{align}\label{F-decomposition}
F^{(0)}_{k,\e}=\Fhat^{(0)}_{k,\e} - R^{(0)}_{k,\e}. 
\end{align}

An attempt to check  that the limit $\Fhat^{(0)}_\e$ satisfies the closed loop property
runs into difficulty because we have very weak control of the errors  $R^{(0)}_{k,\e}$
and the conditioning  in   definition \eqref{F} damages the closed loop property of
the function $h_{k,\e}(\Sopr_z\wz)-h_{k,\e}(\wz)$.   To get around this   we defined
the family indexed by $i$ in \eqref{F}. In the next lemma we develop a  hierarchy of
errors obtained by successive application of Lemma \ref{lm:raghu}.   We give the proof after the current proof is done. 
Recall that given $z_{1,j}\in\range^j$, $x_j=z_1+\cdots+z_j$.
We will use the notation $\varnothing$ for a path of length $j=0$ and then $x_0=0$.

\begin{lemma}\label{lm:construction}
There exist nonnegative random variables on $\bigom_\ell\times\range$, denoted by $\Rtil_{k,\e}^{(i,j,z_{1,j})}$, $\Rhat_{k,\e}^{(i,j,z_{1,j+1})}$, and 
$R_{k,\e}^{(i,j,z_{1,j})}$, with $0\le j\le i\le k$ and $z_{1,j+1}\in\range^{j+1}$, such that the following properties are satisfied. 
\begin{itemize}
\item[{\rm(a)}] $R_{k,\e}^{(0,0,\varnothing)}=R_{k,\e}^{(0)}$.
\item[{\rm(b)}] $\Rtil_{k,\e}^{(i,j,z_{1,j})}$, $\Rhat_{k,\e}^{(i,j,z_{1,j+1})}$,
and $R_{k,\e}^{(i,j,z_{1,j})}$ are $T_{x_j}\kS_{k-i}$-measurable.
\item[{\rm(c)}] $\E[R_{k,\e}^{(i,0,\varnothing)}\,|\,\kS_{k-i-1}]=\Rtil_{k,\e}^{(i+1,0,\varnothing)}+R_{k,\e}^{(i+1,0,\varnothing)}$ for all $i\ge0$. 
\item[{\rm(d)}] $\E[R_{k,\e}^{(i,j,z_{1,j})}\,|\,T_{x_{j-1}}\kS_{k-i-1}]=\Rhat_{k,\e}^{(i+1,j-1,z_{1,j})}+R_{k,\e}^{(i+1,j-1,z_{1,j-1})}$ for all $i\ge j\ge1$ and $z_{1,j}\in\range^j$.
\item[{\rm(e)}] $\E[R_{k,\e}^{(i,j,z_{1,j})}\,|\,T_{x_{j+1}}\kS_{k-i-1}]=\Rtil_{k,\e}^{(i+1,j+1,z_{1,j+1})}+R_{k,\e}^{(i+1,j+1,z_{1,j+1})}$ for all $i\ge j\ge0$ and $z_{1,j+1}\in\range^{j+1}$.
\item[{\rm(f)}] As $k\to\infty$, $\Rtil_{k,\e}^{(i,j,z_{1,j})}$ and $\Rhat_{k,\e}^{(i,j,z_{1,j+1})}$ are uniformly integrable and converge in weak $L^1(\P)$  to a limit
$\Rtil_{\e}^{(i,j,z_{1,j})}$ and $\Rhat_{\e}^{(i,j,z_{1,j+1})}$, respectively.
\item[{\rm(g)}] $R_{k,\e}^{(i,j,z_{1,j})}$ converges to $0$ $\P$-a.s.\ as $k\to\infty$.
\item[{\rm(h)}] One has for $j\ge0$, $z_{1,j+1}\in\range^{j+1}$, and $s\ge1$
	\begin{align*}
	&\Rtil_{\e}^{(s,0,\varnothing)}+\Rtil_{\e}^{(s+1,0,\varnothing)}+\Rtil_{\e}^{(s+2,1,z_1)}+\cdots+\Rtil_{\e}^{(j+s,j-1,z_{1,j-1})}
	+\Rtil_{\e}^{(j+s+1,j,z_{1,j})}\\
	&\qquad\qquad\qquad=\Rtil_{\e}^{(s,1,z_1)}+\cdots+\Rtil_{\e}^{(j+s,j+1,z_{1,j+1})}+\Rhat_{\e}^{(j+s+1,j,z_{1,j+1})}.	
	\end{align*}
\item[{\rm(i)}] For any fixed $j\ge0$ and $z_{1,j+1}\in\range^{j+1}$ both $\Rtil_{\e}^{(i,j,z_{1,j})}$ and $\Rhat_{\e}^{(i,j,z_{1,j+1})}$ converge to $0$ strongly in $L^1(\P)$ as $i\to\infty$.
\end{itemize}
The limits as $k\to\infty$ are to
be understood in the sense that there exists one subsequence along which all the
countably many claimed
limits hold simultaneously.
\end{lemma}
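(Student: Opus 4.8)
The plan is to build every object in the hierarchy by induction on the level index $i$, starting from $R^{(0,0,\varnothing)}_{k,\e}=R^{(0)}_{k,\e}$ of \eqref{F-decomposition}, and using relations (c)--(e) themselves as the recursive \emph{definitions} of the level-$(i+1)$ quantities in terms of the level-$i$ ones. Concretely, given the level-$i$ remainders one forms the relevant conditional expectations --- $\E[R^{(i,0,\varnothing)}_{k,\e}\,|\,\kS_{k-i-1}]$ for (c), and $\E[R^{(i,j,z_{1,j})}_{k,\e}\,|\,T_{x_{j-1}}\kS_{k-i-1}]$, $\E[R^{(i,j,z_{1,j})}_{k,\e}\,|\,T_{x_{j+1}}\kS_{k-i-1}]$ for (d) and (e) --- and splits each of these nonnegative functions by Lemma \ref{lm:raghu} into a uniformly integrable piece ($\Rtil$ or $\Rhat$) plus a remainder ($R$) that tends to $0$ in probability. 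The measurability claim (b) is then automatic, since conditioning on $T_{x_j}\kS_m$ returns a $T_{x_j}\kS_m$-measurable function and the index shifts appearing in (c)--(e) are compatible with the nesting $T_{\pm z}\kS_{m-1}\subseteq\kS_m$. The $L^1$-bound needed to apply Lemma \ref{lm:raghu} propagates down the hierarchy: conditional expectation preserves the integral, shifting preserves it because $\P$ is $T_z$-invariant, and the truncation split is mass-preserving, so every object has $L^1$-norm bounded by $\E[R^{(0)}_{k,\e}]$, which was already bounded uniformly in $k$ in the proof of Lemma \ref{F-lemma}.

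The delicate point is consistency: the same remainder $R^{(i+1,j,z_{1,j})}_{k,\e}$ is required to appear in several of the relations at once --- for example $R^{(i+1,0,\varnothing)}_{k,\e}$ must serve simultaneously in (c) and in (d) with $j=1$ --- so the splittings within one generation cannot be carried out independently. The way to handle this is to process an entire generation of conditional expectations together, truncating them at a common cutoff $a_k\nearrow\infty$ chosen, as in Lemma \ref{lm:raghu}, slowly enough that all of the (finitely many, for each $k$) truncated pieces are uniformly integrable, and to use that the functions being split are built from a common ancestor by conditioning and shifting, so that their non-uniformly-integrable tails can be taken to agree and thus be recorded in a single $R$. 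Running through the countably many applications of Lemma \ref{lm:raghu} and of the Dunford--Pettis weak-$L^1$ sequential compactness (Theorem~9 on p.~292 of \cite{Dun-Sch-58}), and extracting one diagonal subsequence at the end, gives (f) and (g), the in-probability convergence of the remainders being upgraded to $\P$-a.s.\ along a further subsequence.

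For the identity (h) I would iterate (c) $s$ times to descend to level $s$, then alternately apply (e) to step one site forward along the path $z_1,z_2,\dots$ and (d) in reverse; each application contributes one $\Rtil$ (from (c) or (e)) on the left and one $\Rtil$ or $\Rhat$ on the right, while the intermediate $R$-remainders telescope. For the pre-limit objects this produces the asserted identity up to a finite sum of $R^{(\cdot,\cdot,\cdot)}_{k,\e}$-terms; since those tend to $0$ $\P$-a.s.\ along the subsequence, while all the $\Rtil,\Rhat$ pieces converge weakly in $L^1$ and the identity is linear, rearranging it as (a difference of uniformly integrable pieces) $=$ (a difference of remainders) $\to 0$ forces (h) in the limit. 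Finally (i) is a conservation-of-mass estimate: iterating (c) and taking expectations gives $\sum_{s\ge1}\E\bigl[\Rtil^{(s,0,\varnothing)}_{k,\e}\bigr]\le\E[R^{(0)}_{k,\e}]\le C$ uniformly in $k$, so along the subsequence, using that weak $L^1$-limits do not increase the mass, $\sum_{s}\E\bigl[\Rtil^{(s,0,\varnothing)}_\e\bigr]<\infty$; since the limits are nonnegative this forces $\Rtil^{(s,0,\varnothing)}_\e\to0$ in $L^1$, and the same telescoping run along a path $z_{1,j+1}$ handles $\Rtil^{(i,j,z_{1,j})}_\e$ and $\Rhat^{(i,j,z_{1,j+1})}_\e$. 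I expect the consistency bookkeeping of the second paragraph --- arranging that a single $R^{(i+1,j,z_{1,j})}_{k,\e}$ is admissible in all of (c)--(e) at once --- to be the main obstacle; once that is set up, (h) is essentially combinatorial and (i) is routine.
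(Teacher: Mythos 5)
Your overall strategy matches the paper's: induct through the hierarchy, define each generation of remainders by applying Lemma~\ref{lm:raghu} to the appropriate conditional expectations, diagonalize over subsequences, and get $L^1$-control from $\E[R^{(0)}_{k,\e}]\le C$. You also correctly identify the crux: relations (c)--(e) force the \emph{same} remainder $R^{(i+1,j,z_{1,j})}_{k,\e}$ to appear in several decompositions at once, so those decompositions cannot be performed independently.

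But the mechanism you propose for this consistency step does not work. You suggest a common cutoff $a_k$ and claim that ``the functions being split are built from a common ancestor by conditioning and shifting, so that their non-uniformly-integrable tails can be taken to agree.'' They are not the same function: for instance $\E[R^{(s,0,\varnothing)}_{k,\e}\,|\,\kS_{k-s-1}]$ and $\E[R^{(s,1,z_1)}_{k,\e}\,|\,\kS_{k-s-1}]$ are conditional expectations of two \emph{different} level-$s$ remainders, each produced by its own Lemma~\ref{lm:raghu} split, and there is no reason their excess-over-$a_k$ pieces coincide even pointwise. What the paper actually does is decompose the two conditional expectations independently (obtaining, say, $R^{(s+1,0,\varnothing)}_{k,\e}$ and $\Rbar^{(s+1,0,z_1)}_{k,\e}$), set $R$ to be the pointwise \emph{minimum} of all remainders in that generation (so $R\to0$ in probability), and then use the telescoping identity
\[
\E[R^{(s-1,0,\varnothing)}_{k,\e}\,|\,\kS_{k-s-1}]
=\E[\Rtil^{(s,0,\varnothing)}_{k,\e}\,|\,\kS_{k-s-1}]+\Rtil^{(s+1,0,\varnothing)}_{k,\e}+R^{(s+1,0,\varnothing)}_{k,\e}
=\E[\Rtil^{(s,1,z_1)}_{k,\e}\,|\,\kS_{k-s-1}]+\Rhat^{(s+1,0,z_1)}_{k,\e}+\Rbar^{(s+1,0,z_1)}_{k,\e}
\]
to conclude that the nonnegative differences $R^{(s+1,0,\varnothing)}_{k,\e}-R$ and $\Rbar^{(s+1,0,z_1)}_{k,\e}-R$ are bounded above by sums of uniformly integrable pieces and are therefore themselves uniformly integrable. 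One then \emph{redefines} $\Rtil$ (resp.\ $\Rhat$) to absorb these differences, after which all the remainders in the generation equal $R$. This constraint-driven argument has no analogue in your proposal; ``taking the tails to agree'' is precisely the thing that needs to be earned, and it is earned via that identity, not via a shared truncation level. Note also that this same pre-limit identity, after $k\to\infty$, \emph{is} property (h); your description of deriving (h) by ``alternately applying (e)\dots and (d) in reverse'' is not how the paper gets it, since (d) is itself a byproduct of the redefinition rather than an independently available rule. Finally, the induction in the paper runs over the diagonal $s=i-j$ rather than over $i$, and this ordering matters because the consistency identity links quantities on the $s$-th and $(s+1)$-th diagonals. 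Your sketch of (i) is essentially correct.
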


Fix $i\ge0$ and let $k\ge i$.
Starting with \eqref{F-decomposition}, using (a) of the above lemma, and applying (c) repeatedly we have the decomposition $F^{(i)}_{k,\e}=\Fhat^{(i)}_{k,\e}-R^{(i)}_{k,\e}$ with 
	\[\Fhat^{(i)}_{k,\e}=\E[\Fhat^{(0)}_{k,\e}-\Rtil^{(1,0,\varnothing)}_{k,\e}-\cdots-\Rtil^{(i,0,\varnothing)}_{k,\e}\,|\,\kS_{k-i}]\quad\text{and}\quad
	R^{(i)}_{k,\e}=R_{k,\e}^{(i,0,\varnothing)}.\]
$R^{(i)}_{k,\e}$ is $\kS_{k-i}$-measurable and $\Fhat^{(i)}_{k,\e}$  uniformly integrable. (The proof of Theorem 5.1 in Chapter 4 of \cite{Dur-96} applies to the uniformly
integrable sequence $\Fhat^{(0)}_{k,\e}-\Rtil^{(1,0,\varnothing)}_{k,\e}-\cdots-\Rtil^{(i,0,\varnothing)}_{k,\e}$.)
One can check by a standard $\pi$-$\lambda$ or monotone class argument  that any
weak $L^1(\P)$ limit coincides with the weak limit without the conditioning, namely  
$\Fhat^{(i)}_\e=\Fhat^{(0)}_\e-\Rtil^{(1,0,\varnothing)}_\e-\cdots-\Rtil^{(i,0,\varnothing)}_\e.$
Furthermore, since $\E[R^{(0)}_{k,\e}]$ is uniformly bounded in $k$ we have 
\[\E[\Rtil^{(1,0,\varnothing)}_{k,\e}+\cdots+\Rtil^{(i,0,\varnothing)}_{k,\e}]\le \E[R^{(0)}_{k,\e}]\le C.\]
Taking $i\to\infty$ we see that $\Fhat^{(i)}_\e$ decreases converging strongly in $L^1(\P)$ to 
\[\Fhat_\e=\Fhat_\e^{(0)}-\sum_{i\ge1}\Rtil^{(i,0,\varnothing)}_\e.\]
Then, $\Fhat_\e\in L^1(\P)$ satisfies (i) of Definition \ref{cK-def}. 

Fix  a path $x_{0,\infty}$ with increments in $\range$. Fix integers $k\ge b\ge j\ge0$. 
%Observe next that for any bounded function $g(\w)$ we have $\E[ g \,|\, \kS_k ]\circ T_x = \E[ g\circ T_x  \,|\, T_{-x}\kS_k]$. 
Recall that $T_{\pm z}\kS_{s-1}\subset\kS_s$ for all $z\in\range$ and $s\ge1$. 
In particular, $\kS_{k-b+j}\supset T_{x_1}\kS_{k-b+j-1}\supset\cdots\supset T_{x_j}\kS_{k-b}$. Applying (e) and (b) of Lemma \ref{lm:construction} repeatedly one
has
	\begin{align}\label{whatever}
	\E\Big[R_{k,\e}^{(b-j)}\,\Big|\,T_{x_j}\kS_{k-b}\Big]=\E\Big[\sum_{s=1}^j\Rtil_{k,\e}^{(b-j+s,s,z_{1,s})}\,\Big|\,T_{x_j}\kS_{k-b}\Big]+R_{k,\e}^{(b,j,z_{1,j})}.
	\end{align}
Thus,
\begin{align}
&\E\Big[h_{k,\e}(T_{x_{j+1}}\w,z_{j+2,j+\ell+1})-h_{k,\e}(T_{x_j}\w,z_{j+1,j+\ell})\,\Big|\,\kS_{k-b}\Big]\label{h diff}\\
&\quad=\E\Big[\E[h_{k,\e}(T_{x_{j+1}}\w,z_{j+2,j+\ell+1})\nn\\
&\qquad\qquad\qquad\qquad-h_{k,\e}(T_{x_j}\w,z_{j+1,j+\ell})\,|\,T_{-x_j}\kS_{k-b+j}]\,\Big|\,\kS_{k-b}\Big]\nn\\
&\quad=\E\Big[F^{(b-j)}_{k,\e}(T_{x_j}\w,z_{j+1,j+\ell},z_{j+\ell+1})\,\Big|\,\kS_{k-b}\Big]\nn\\
&\quad=\E\Big[\Fhat^{(b-j)}_{k,\e}(T_{x_j}\w,z_{j+1,j+\ell},z_{j+\ell+1})\,\Big|\,\kS_{k-b}\Big]\nn\\
&\quad\quad\quad-\E\Big[R^{(b-j)}_{k,\e}(T_{x_j}\w,z_{j+1,j+\ell},z_{j+\ell+1})\,\Big|\,\kS_{k-b}\Big]\nn\\
&\quad=\E\Big[\Fhat^{(b-j)}_{k,\e}(T_{x_j}\w,z_{j+1,j+\ell},z_{j+\ell+1})\,\Big|\,\kS_{k-b}\Big]\label{ui part1}\\
&\quad\quad\quad-\E\Big[\Rtil^{(b-j+1,1,z_1)}_{k,\e}(\,\cdot\,,z_{j+1,j+\ell},z_{j+\ell+1})\nn\\
&\qquad\qquad\qquad+\cdots+\Rtil_{k,\e}^{(b,j,z_{1,j})}(\,\cdot\,,z_{j+1,j+\ell},z_{j+\ell+1})\,|\,T_{x_j}\kS_{k-b}\Big](T_{x_j}\w)\label{ui part2}\\
&\quad\quad\quad-R^{(b,j,z_{1,j})}_{k,\e}(T_{x_j}\w,z_{j+1,j+\ell},z_{j+\ell+1}).\nn%\label{error part}
%&\qquad=\E\Big[\Fhat^{(0)}_{k,\e}(T_{x_j}\w,z_{j+1,j+\ell},z_{j+\ell+1})\,\Big|\,\kS_{k-b}\Big]-\E\Big[R^{(0)}_{k,\e}(\w,z_{j+1,j+\ell},z_{j+\ell+1})\,\Big|\,T_{x_j}\kS_{k-b}\Big](T_{x_j}\w).\nn%\label{Fhat}
%&\qquad=\E\Big[\Fhat^{(0)}_{k,\e}(T_{x_j}\w,z_{j+1,j+\ell},z_{j+\ell+1})\,\Big|\,\kS_{k-b}\Big]\label{Fhat diff}\\
%&\qquad\qquad\qquad-\E\Big[(\Rtil^{(1)}_{k,\e}+\cdots+\Rtil^{(b+j)}_{k,\e})(\w,z_{j+1,j+\ell},z_{j+\ell+1})\,\Big|\,\kS_{k-b-j}\Big](T_{x_j}\w)\label{Rtil diff}\\
%&\qquad\qquad\qquad-I_k(T_{x_j}\w),\nn
\end{align}
The last equality used \eqref{whatever} and the formula $\E[ g \,|\, \kS_k ]\circ T_x = \E[ g\circ T_x  \,|\, T_{-x}\kS_k]$.
The two sequences in \eqref{ui part1} and \eqref{ui part2} are uniformly integrable and converge weakly in $L^1(\P)$ (along a subsequence) 
to $\Fhat^{(b-j)}_\e(T_{x_j}\w,z_{j+1,j+\ell},z_{j+\ell+1})$ and 
\[(\Rtil_\e^{(b-j+1,1,z_{1,j})}+\cdots+\Rtil^{(b,j,z_{1,j})}_\e)(T_{x_j}\w,z_{j+1,j+\ell},z_{j+\ell+1}),\] respectively. 
%(For the first one the proof of Theorem 5.1 in Chapter 4 of \cite{Dur-96} applies to the uniformly
%integrable sequences $\Fhat_{k,\e}^{(b-j)}\circ T_{x_j}$.)
%One can check by a standard $\pi$-$\lambda$ or monotone class argument  that the
%weak limit of the sequence on the left in \eqref{ui part} coincides with the weak limits without the conditioning, namely $\Fhat^{(b-j)}_\e(T_{x_j}\w,z_{j+1,j+\ell},z_{j+\ell+1})$. 
	
For any two paths 
$\{\wz_i\}_{i=0}^n$ and $\{\bar\wz_j\}_{j=0}^m$ 
as in (iii) of Definition \ref{cK-def}
\begin{align*}
\sum_{i=0}^{n-1}\E\Big[ h_{k,\e}(\Sopr_{a_{i+1}}\wz_i)-h_{k,\e}(\wz_i)\,\Big|\,\kS_{k-b}\Big]=\sum_{j=0}^{m-1} \E\Big[h_{k,\e}(\Sopr_{\bar a_{j+1}}\bar\wz_j)-h_{k,\e}(\bar\wz_j) \,\Big|\,\kS_{k-b}\Big].
\end{align*}

Now, taking $b>\max(m,n)$ and further subsequences of %convex combinations of 
\eqref{h diff} %, as was done on page \pageref{convex page}, 
we arrive at
\begin{align*}
&\sum_{i=0}^{n-1}\Big(\Fhat^{(b-i)}_\e(\wz_i,a_{i+1})-\sum_{s=1}^i\Rtil_{\e}^{(b-i+s,s,z_{1,s})}(\wz_i,a_{i+1})\Big)\\
&\qquad\qquad\qquad\qquad=\sum_{j=0}^{m-1}\Big(\Fhat^{(b-j)}_{\e}(\bar\wz_j,\bar a_{j+1})-\sum_{s=1}^j\Rtil_{\e}^{(b-j+s,s,z'_{1,s})}(\bar\wz_j,\bar a_{j+1})\Big).
\end{align*}
Here, $z_{1,n}$ and $z'_{1,m}$ denote the steps of the two paths corresponding to $\{\wz_i\}$ and $\{\bar\wz_j\}$.
Taking $b\to\infty$ and applying part (i) of Lemma \ref{lm:construction} we conclude that  $\Fhat_\e$ satisfies the closed loop property (iii) 
of Definition \ref{cK-def}.
Next, we work on the mean zero property. 

Abbreviate $\zhat_{1,\ell}=(z,\dotsc,z)\in\range^\ell$. Then,
	\begin{align*}
	c(z)&=\E[\Fhat_\e(\w,\zhat_{1,\ell},z)]
	=\inf_i\E[\Fhat^{(i)}_\e(\w,\zhat_{1,\ell},z)]=\inf_i\lim_{k\to\infty}\E[\Fhat^{(i)}_{k,\e}(\w,\zhat_{1,\ell},z)]\\
	&\ge\inf_i\lim_{k\to\infty}\E[F^{(i)}_{k,\e}(\w,\zhat_{1,\ell},z)]
	=\lim_{k\to\infty}\E[h_{k,\e}(T_z\w,\zhat_{1,\ell})-h_{k,\e}(\w,\zhat_{1,\ell})]=0.
	\end{align*}
Since $\Fhat_\e$ satisfies the closed loop property, one can define its path integral $\fhat_\e$ as above and use \eqref{deff} to write
\beq	\begin{aligned}
	\fhat_\e(\w,\zbar_{1,\ell},\zbar_{1,\ell},z)
	&=\Fhat_\e(T_{-\ell z}\w,\zhat_{1,\ell},z)\\
	&\qquad+\Fhat_\e(T_{-(\ell-1)z}\w,\zhat_{1,\ell},\zbar_1)
	+\cdots+\Fhat_\e(\w,(\zhat_\ell,\zbar_{1,\ell-1}),\zbar_\ell)\\
	&\qquad-\Fhat_\e(T_{-\ell z}\w,\zhat_{1,\ell},\zbar_1)
	-\cdots-\Fhat_\e(T_{-z}\w,(\zhat_\ell,\zbar_{1,\ell-1}),\zbar_\ell).
	\end{aligned} \label{temp-21}\eeq
Thus, we have $c(z)=\E[\fhat_\e(\w,\zbar_{1,\ell},\zbar_{1,\ell},z)]$ for all $\zbar_{1,\ell}\in\range^\ell$ and $z\in\range$. Hence
	\[c(z)=|\range|^{-\ell}\sum_{\zbar_{1,\ell}\in\range^\ell}\E[\fhat_\e(\w,\zbar_{1,\ell},\zbar_{1,\ell},z)].\]
Integrating \eqref{F-c} out (with $F=\Fhat_\e$) one sees that \[\E[g_n(\Sopr_z\wz)-g_n(\wz)]=\E[\Fhat_\e(\wz,z)-c(z_1)].\] Since $g_n(\Sopr_z\wz)-g_n(\wz)$ has the 
mean zero property (ii) of Definition \ref{cK-def}, we conclude that $\Fhat_\e(\wz,z)-c(z_1)$ does too. 
Let   $\zbar_{1,\ell}\in \range^\ell$,  $z_{1,n}\in\range^n$, $x=z_1+\cdots+z_n$,  and apply the mean zero property of
$\Fhat_\e(\wz,z)-c(z_1)$  to the path  that takes  steps   $(z_{1,n},\zbar_{1,\ell})$
to go from $(0,\zbar_{1,\ell})$  to $(x+\xbar_\ell,\zbar_{1,\ell})$.
This gives  
\beq
\E[\fhat_\e(\w,\zbar_{1,\ell},\zbar_{1,\ell},x+\xbar_\ell)]=c(\zbar_1)+\cdots+c(\zbar_\ell)+\sum_{i=1}^n c(z_i).\label{temp-22}\eeq
Since the left-hand side does not depend on $z_{1,n}$ as long as the increments add up to $x$ we see that $c(z_1)+\cdots+c(z_n)$ only depends
on $z_1+\cdots+z_n$.
Consequently, $\Fhat_\e(\wz,z)-c(z_1)$ also
has the closed loop property and thus belongs to $\K$. This completes the proof of Lemma \ref{F-lemma}.
\end{proof}

\begin{proof}[Proof of Lemma \ref{lm:construction}]
In what follows, decomposing a sequence $R_k\ge0$ means applying Lemma \ref{lm:raghu} to it. The leftmost term
in the decomposition is the one that 
converges in weak $L^1(\P)$ topology along a subsequence.  Its limit is denoted by the same symbol, with $k$ omitted. 
The rightmost term in the decomposition is the one converging to $0$ $\P$-a.s. 
Subsequences are chosen to work for all $z_{1,j}$, $j\ge1$, at once, and are
still indexed by $k$. Once a subsequence has been given to suit a decomposition, subsequent decompositions go along this subsequence, and so on. 
Induction will be repeatedly used in our
proof and once an induction is complete, the diagonal trick is used to obtain one subsequence that works for all the terms simultaneously. 
Recall that $R^{(0)}_{k,\e}\ge0$ and $\E[R^{(0)}_{k,\e}]$ is bounded uniformly in $k$.

The following diagram may be instructive to the reader during the course of the proof. Index the columns from left to right by $i=0,1,\dotsc,k$
and the rows from top to bottom by $j=0,1,\dotsc,k$.
\begin{align*}
\begin{matrix}
\kS_k&\kS_{k-1}&\kS_{k-2}&\cdots&\cdots&\kS_{k-i}&\cdots&\kS_0\\
&T_{x_1}\kS_{k-1}&T_{x_1}\kS_{k-2}&\cdots&\cdots&T_{x_1}\kS_{k-i}&\cdots&T_{x_1}\kS_0\\
&&T_{x_2}\kS_{k-2}&&&\vdots&&&\\
&&&\ddots&&T_{x_j}\kS_{k-i}&&\vdots\\
&&&&\ddots&\vdots&&\vdots\\
&&&&&T_{x_{i}}\kS_{k-i}&&\\
&&&&&&\ddots&&\\
&&&&&&&T_{x_{k}}\kS_0
\end{matrix}
\end{align*}
The algebra on row $j$ and column $i\ge j$ is $T_{x_j}\kS_{k-i}$. Each algebra on the diagram includes the one down and right of it, the one up and 
right of it, and the one to the right of it. The decomposition in (d) corresponds
to a step up and right in the diagram, while the decomposition in (e) corresponds
to a step down and right.

We will define $\Rtil_{k,\e}^{(i,j,z_{1,j})}$, $\Rhat_{k,\e}^{(i,j,z_{1,j+1})}$, and $R_{k,\e}^{(i,j,z_{1,j})}$ by induction on $s=i-j\ge0$. On the above diagram, this 
corresponds to the $s$-th diagonal starting at $\kS_{k-s}$ and going down to $T_{x_1}\kS_{k-s-1}$ and so on. We check property (i) after the whole induction 
process is complete.\medskip

{\sl Induction assumption for $s$:}  there exist nonnegative random variables on $\bigom_\ell\times\range$, denoted by $\Rtil_{k,\e}^{(i,j,z_{1,j})}$, 
$\Rhat_{k,\e}^{(i,j,z_{1,j+1})}$, and 
$R_{k,\e}^{(i,j,z_{1,j})}$, with $0\le j\le i\le k$, $i-j\le s$, and $z_{1,j+1}\in\range^{j+1}$, such that properties (a-h) are satisfied (whenever the terms 
involved have already been defined).\medskip

Set $R_{k,\e}^{(0,0,\varnothing)}=R_{k,\e}^{(0)}$ and $\Rtil_{k,\e}^{(0,0,\varnothing)}=0$.
For $k>j\ge0$, $z_{1,j+1}\in\range^{j+1}$ observe that $T_{x_{j+1}}\kS_{k-j-1}\subset T_{x_j}\kS_{k-j}$ and decompose inductively
\[\E[R_{k,\e}^{(j,j,z_{1,j})}\,|\,T_{x_{j+1}}\kS_{k-j-1}]=\Rtil_{k,\e}^{(j+1,j+1,z_{1,j+1})}+R_{k,\e}^{(j+1,j+1,z_{1,j+1})}.\]
For $j\ge0$ and $z_{1,j+1}\in\range^{j+1}$ set $\Rhat_{k,\e}^{(j,j,z_{1,j+1})}=0$. These are actually never used in properties (a-i) of the lemma.
This settles the case $s=0$.

Next, for $k>0$, decompose \[\E[R_{k,\e}^{(0,0,\varnothing)}\,|\,\kS_{k-1}]=\Rtil_{k,\e}^{(1,0,\varnothing)}+R_{k,\e}^{(1,0,\varnothing)}\]
and for $k>j\ge1$ decompose inductively 
\[\E[R_{k,\e}^{(j+1,j,z_{1,j})}\,|\,T_{x_{j+1}}\kS_{k-j-2}]=\Rtil_{k,\e}^{(j+2,j+1,z_{1,j+1})}+R_{k,\e}^{(j+2,j+1,z_{1,j+1})}.\]
Set $\Rhat_{k,\e}^{(j+1,j,z_{1,j+1})}=0$ for all $k>j\ge0$ and $z_{1,j+1}\in\range^{j+1}$. These are again not used in properties (a-i) of the lemma. This settles the 
case $s=1$.

Now fix $s\ge1$ and assume the induction assumption for this $s$. 
We will define $\Rtil_{k,\e}^{(j+s+1,j,z_{1,j})}$, $\Rhat_{k,\e}^{(j+s+1,j,z_{1,j+1})}$, and $R_{k,\e}^{(j+s+1,j,z_{1,j})}$
by induction on $j\ge0$. On the above diagram, this corresponds to going along the fixed $s$-th diagonal.\medskip

{\sl Induction assumption for $j$ with $s\ge1$ fixed:}  we have defined $\Rtil_{k,\e}^{(j+s+1,j,z_{1,j})}$, $\Rhat_{k,\e}^{(j+s+1,j,z_{1,j+1})}$, and 
$R_{k,\e}^{(j+s+1,j,z_{1,j})}$, 
such that properties (a-h) are satisfied (whenever the terms 
involved have already been defined).\medskip

Observe that $\kS_{k-s-1}\subset T_{z_1}\kS_{k-s}$ and temporarily decompose 
	\begin{align*}
	&\E[R_{k,\e}^{(s,0,\varnothing)}\,|\,\kS_{k-s-1}]=\Rtil_{k,\e}^{(s+1,0,\varnothing)}+R_{k,\e}^{(s+1,0,\varnothing)}\quad\text{and}\\
	&\E[R_{k,\e}^{(s,1,z_1)}\,|\,\kS_{k-s-1}]=\Rhat_{k,\e}^{(s+1,0,z_1)}+{\Rbar}_{k,\e}^{(s+1,0,z_1)}.
	\end{align*}
Let $R$ be the smallest of ${\Rbar}_{k,\e}^{(s+1,0,z_1)}$, $z_1\in\range$, and $R_{k,\e}^{(s+1,0,\varnothing)}$.
Use (c) and (e) with $i=s-1$ and $j=0$ to write
	\begin{align*}
	&\E[\Rtil_{k,\e}^{(s,0,\varnothing)}\,|\,\kS_{k-s-1}]+\Rtil_{k,\e}^{(s+1,0,\varnothing)}+R_{k,\e}^{(s+1,0,\varnothing)}\\
	&\qquad=\E[R_{k,\e}^{(s-1,0,\varnothing)}\,|\,\kS_{k-s-1}]
	=\E[\Rtil_{k,\e}^{(s,1,z_1)}\,|\,\kS_{k-s-1}]+\Rhat_{k,\e}^{(s+1,0,z_1)}+{\Rbar}_{k,\e}^{(s+1,0,z_1)}.
	\end{align*} 
The above display shows that the differences ${\Rbar}_{k,\e}^{(s+1,0,z_1)}-R$ and $R_{k,\e}^{(s+1,0,\varnothing)}-R$ are uniformly integrable.
Redefine all the terms ${\Rbar}_{k,\e}^{(s+1,0,z_1)}$, $z_1\in\range$, and $R_{k,\e}^{(s+1,0,\varnothing)}$
to equal $R$ and redefine $\Rtil_{k,\e}^{(s+1,0,\varnothing)}$ to equal 
$\Rtil_{k,\e}^{(s+1,0,\varnothing)}+R_{k,\e}^{(s+1,0,\varnothing)}-R$ and $\Rhat_{k,\e}^{(s+1,0,z_1)}$ to equal 
$\Rhat_{k,\e}^{(s+1,0,z_1)}+{\Rbar}_{k,\e}^{(s+1,0,z_1)}-R$. 
The upshot is that one can assume that ${\Rbar}_{k,\e}^{(s+1,0,z_1)}=R_{k,\e}^{(s+1,0,\varnothing)}$ for all $z_1\in\range$.
Taking $k\to\infty$ in the above display verifies (h) for $j=0$. This starts the induction at $j=0$. 
  
Now we go from $j$ to $j+1$. Temporarily decompose 
	\begin{align}
	&\E[R_{k,\e}^{(j+s+1,j,z_{1,j})}\,|\,T_{x_{j+1}}\kS_{k-j-s-2}]=\Rtil_{k,\e}^{(j+s+2,j+1,z_{1,j+1})}+R_{k,\e}^{(j+s+2,j+1,z_{1,j+1})}\quad\text{and}\nn\\
	&\E[R_{k,\e}^{(j+s+1,j+2,z_{1,j+2})}\,|\,T_{x_{j+1}}\kS_{k-j-s-2}]=\Rhat_{k,\e}^{(j+s+2,j+1,z_{1,j+2})}+{\Rbar}_{k,\e}^{(j+s+2,j+1,z_{1,j+2})}.\label{property:h}
	\end{align}
Then, one has
	\begin{align*}%\label{key}
	%\begin{split}
	&\E[\Rtil_{k,\e}^{(s,0,\varnothing)}+\Rtil_{k,\e}^{(s+1,0,\varnothing)}+\Rtil_{k,\e}^{(s+2,1,z_1)}+\cdots+\Rtil_{k,\e}^{(j+s+1,j,z_{1,j})}\,|\,T_{x_{j+1}}\kS_{k-j-s-2}]\\
	&\qquad\qquad\qquad\qquad+\Rtil_{k,\e}^{(j+s+2,j+1,z_{1,j+1})}+R_{k,\e}^{(j+s+2,j+1,z_{1,j+1})}\\
	&\qquad\qquad=\E[R_{k,\e}^{(s-1,0,\varnothing)}\,|\,T_{x_{j+1}}\kS_{k-j-s-2}]\\
	&\qquad\qquad=\E[\Rtil_{k,\e}^{(s,1,z_1)}+\cdots+\Rtil_{k,\e}^{(j+s+1,j+2,z_{1,j+2})}\,|\,T_{x_{j+1}}\kS_{k-j-s-2}]\\
	&\qquad\qquad\qquad\qquad+\Rhat_{k,\e}^{(j+s+2,j+1,z_{1,j+2})}+{\Rbar}_{k,\e}^{(j+s+2,j+1,z_{1,j+2})}.	
	%\end{split}
	\end{align*}
Here is an explanation: use (c) first twice to condition $R^{(s-1,0,\varnothing)}$ on $\kS_{k-s}$ and then on $\kS_{k-s-1}$.
Next, use (e) conditioning $R^{(s+1,0,\varnothing)}$ on $T_{x_1}\kS_{k-s-2}$ then 
$R^{(s+2,1,z_1)}$ on $T_{x_2}\kS_{k-s-3}$ and so on until conditioning $R^{(s+j+1,j,z_{1,j})}$ on $T_{x_{j+1}}\kS_{k-s-j-2}$. 
On the other side use (e) conditioning $R^{(s-1,0,\varnothing)}$ on $T_{x_1}\kS_{k-s}$ then 
$R^{(s,1,x_1)}$ conditioned on $T_{x_2}\kS_{k-s-1}$ and so on until $R^{(s+j,j+1,z_{1,j+1})}$ is conditioned on $T_{x_{j+2}}\kS_{k-s-j-1}$.
Then use \eqref{property:h} and condition $R^{(s+j+1,j+2,z_{1,j+2})}$ on $T_{x_{j+1}}\kS_{k-s-j-2}$.

Now, repeating what we have done for the case $j=0$, we can assume that ${\Rbar}_{k,\e}^{(j+s+2,j+1,z_{1,j+2})}=R_{k,\e}^{(j+s+2,j+1,z_{1,j+1})}$ for all 
$z_{1,j+2}\in\range^{j+2}$. Taking $k\to\infty$ verifies (h).

We have achieved the induction on $j$ and thus also the induction on $s$. Our construction is thus complete once we prove it satisfies (i).
Using (h), this follows easily by induction on $j\ge0$ once one shows that $\Rtil_\e^{s,0,\varnothing}\to0$ strongly in $L^1(\P)$, which itself follows from
the fact that $\E[\Rtil_{k,\e}^{(1,0,\varnothing)}+\cdots+\Rtil_{k,\e}^{(s,0,\varnothing)}]\le\E[R_{k,\e}^{(0)}]$ is uniformly bounded in $k$.
The lemma is proved.
\end{proof}

%%      ---------------------------------------------------------------------
%%      ---------------------------ACKNOWLEDGMENTS (OPTIONAL) ---------------
%%      ---------------------------------------------------------------------

%% ***** UNCOMMENT THE FOLLOWING LINE TO ADD ACKNOWLEDGMENTS.

 \ack 

%%      Type acknowledgments here.

%F.\ Rassoul-Agha
We  thank Elena Kosygina and Fraydoun Rezakhanlou  for valuable discussions.
Rassoul-Agha was supported in part by NSF Grant DMS-0747758. Sepp\"al\"ainen was 
supported in part by NSF Grant %DMS-0701091 and 
DMS-1003651 and by the Wisconsin Alumni Research Foundation.

%%      ---------------------------------------------------------------------
%%      --------------------------- BIBLIOGRAPHY ----------------------------
%%      ---------------------------------------------------------------------

\frenchspacing
\bibliographystyle{cpam}
\bibliography{firasbib2010}

\begin{thebibliography}{10}
\providecommand{\url}[1]{\texttt{#1}}
\providecommand{\urlprefix}{URL }
\providecommand{\eprint}[2][]{\url{#2}}

\bibitem{Arm-Sou-11-}
Armstrong, S.~N.; Souganidis, P.~E. Stochastic homogenization of
  Hamilton-Jacobi and degenerate Bellman equations in unbounded environments
  (2011). \urlprefix\url{http://arxiv.org/abs/1103.2814v4}.

\bibitem{Ave-Hol-Red-10}
Avena, L.; den Hollander, F.; Redig, F. Large deviation principle for
  one-dimensional random walk in dynamic random environment: attractive
  spin-flips and simple symmetric exclusion. \emph{Markov Process. Related
  Fields} \textbf{16} (2010), no.~1, 139--168.

\bibitem{Bol-89}
Bolthausen, E. A note on the diffusion of directed polymers in a random
  environment. \emph{Comm. Math. Phys.} \textbf{123} (1989), no.~4, 529--534.

\bibitem{Bol-Szn-02-dmv}
Bolthausen, E.; Sznitman, A.-S. \emph{Ten lectures on random media}, \emph{DMV
  Seminar}, vol.~32. Birkh{\"a}user Verlag, Basel, 2002.

\bibitem{Car-Hu-04}
Carmona, P.; Hu, Y. Fluctuation exponents and large deviations for directed
  polymers in a random environment. \emph{Stochastic Process. Appl.}
  \textbf{112} (2004), no.~2, 285--308.

\bibitem{Che-67-eng}
Chernov, A.~A. Replication of a multicomponent chain, by the ``lightning
  mechanism''. \emph{Biophysics.} \textbf{12} (1967), 336--341.

\bibitem{Com-Gan-Zei-00}
Comets, F.; Gantert, N.; Zeitouni, O. Quenched, annealed and functional large
  deviations for one-dimensional random walk in random environment.
  \emph{Probab. Theory Related Fields} \textbf{118} (2000), no.~1, 65--114.

\bibitem{Com-Shi-Yos-03}
Comets, F.; Shiga, T.; Yoshida, N. Directed polymers in a random environment:
  path localization and strong disorder. \emph{Bernoulli} \textbf{9} (2003),
  no.~4, 705--723.

\bibitem{Com-Shi-Yos-04}
Comets, F.; Shiga, T.; Yoshida, N. Probabilistic analysis of directed polymers
  in a random environment: a review. In \emph{Stochastic analysis on large
  scale interacting systems}, \emph{Adv. Stud. Pure Math.}, vol.~39, pp.
  115--142, Math. Soc. Japan, Tokyo, 2004.

\bibitem{Cox-Gan-Gri-93}
Cox, J.~T.; Gandolfi, A.; Griffin, P.~S.; Kesten, H. Greedy lattice animals.
  {I}. {U}pper bounds. \emph{Ann. Appl. Probab.} \textbf{3} (1993), no.~4,
  1151--1169.

\bibitem{Dem-Zei-98}
Dembo, A.; Zeitouni, O. \emph{Large deviations techniques and applications},
  \emph{Applications of Mathematics (New York)}, vol.~38. Springer-Verlag, New
  York, 2nd ed., 1998.

\bibitem{Hol-00}
den Hollander, F. \emph{Large deviations}, \emph{Fields Institute Monographs},
  vol.~14. American Mathematical Society, Providence, RI, 2000.

\bibitem{Hol-09}
den Hollander, F. \emph{Random polymers}, \emph{Lecture Notes in Mathematics},
  vol. 1974. Springer-Verlag, Berlin, 2009. Lectures from the 37th Probability
  Summer School held in Saint-Flour, 2007.

\bibitem{Deu-Str-89}
Deuschel, J.-D.; Stroock, D.~W. \emph{Large deviations}, \emph{Pure and Applied
  Mathematics}, vol. 137. Academic Press Inc., Boston, MA, 1989.

\bibitem{Dre-etal-10-}
Drewitz, A.; G{\"a}rtner, J.; Ram{\'\i}rez, A.~F.; Sun, R. Survival Probability
  of a Random Walk Among a Poisson System of Moving Traps  (2010).
  \urlprefix\url{http://arxiv.org/abs/1010.3958}.

\bibitem{Dun-Sch-58}
Dunford, N.; Schwartz, J.~T. \emph{Linear {O}perators. {I}. {G}eneral
  {T}heory}. With the assistance of W. G. Bade and R. G. Bartle. Pure and
  Applied Mathematics, Vol. 7, Interscience Publishers, Inc., New York, 1958.

\bibitem{Dur-96}
Durrett, R. \emph{Probability: theory and examples}. Duxbury Press, Belmont,
  CA, 2nd ed., 1996.

\bibitem{E-Weh-Xin-08}
E, W.; Wehr, J.; Xin, J. Breakdown of homogenization for the random
  {H}amilton-{J}acobi equations. \emph{Commun. Math. Sci.} \textbf{6} (2008),
  no.~1, 189--197.

\bibitem{Gan-Kes-94}
Gandolfi, A.; Kesten, H. Greedy lattice animals. {II}. {L}inear growth.
  \emph{Ann. Appl. Probab.} \textbf{4} (1994), no.~1, 76--107.

\bibitem{Geo-88}
Georgii, H.-O. \emph{Gibbs measures and phase transitions}, \emph{de Gruyter
  Studies in Mathematics}, vol.~9. Walter de Gruyter \& Co., Berlin, 1988.

\bibitem{Gia-07}
Giacomin, G. \emph{Random polymer models}. Imperial College Press, London,
  2007.

\bibitem{Gre-Hol-94}
Greven, A.; den Hollander, F. Large deviations for a random walk in random
  environment. \emph{Ann. Probab.} \textbf{22} (1994), no.~3, 1381--1428.

\bibitem{Hav-Ben-02}
Havlin, S.; Ben-Avraham, D. {Diffusion in disordered media}. \emph{{Advances in
  Physics}} \textbf{{51}} ({2002}), no.~{1}, {187--292}.

\bibitem{Hus-Hen-85}
Huse, D.~A.; Henley, C.~L. Pinning and Roughening of Domain Walls in Ising
  Systems Due to Random Impurities. \emph{Phys. Rev. Lett.} \textbf{54} (1985),
  no.~25, 2708--2711.

\bibitem{Imb-Spe-88}
Imbrie, J.~Z.; Spencer, T. Diffusion of directed polymers in a random
  environment. \emph{J. Statist. Phys.} \textbf{52} (1988), no. 3-4, 609--626.

\bibitem{Kas-94}
Kassay, G. A simple proof for {K}{\"o}nig's minimax theorem. \emph{Acta Math.
  Hungar.} \textbf{63} (1994), no.~4, 371--374.

\bibitem{Kos-Rez-Var-06}
Kosygina, E.; Rezakhanlou, F.; Varadhan, S. R.~S. Stochastic homogenization of
  {H}amilton-{J}acobi-{B}ellman equations. \emph{Comm. Pure Appl. Math.}
  \textbf{59} (2006), no.~10, 1489--1521.

\bibitem{Kos-Var-08}
Kosygina, E.; Varadhan, S. R.~S. Homogenization of
  {H}amilton-{J}acobi-{B}ellman equations with respect to time-space shifts in
  a stationary ergodic medium. \emph{Comm. Pure Appl. Math.} \textbf{61}
  (2008), no.~6, 816--847.

\bibitem{Lio-Sou-05}
Lions, P.-L.; Souganidis, P.~E. Homogenization of ``viscous''
  {H}amilton-{J}acobi equations in stationary ergodic media. \emph{Comm.
  Partial Differential Equations} \textbf{30} (2005), no. 1-3, 335--375.

\bibitem{Mar-02}
Martin, J.~B. Linear growth for greedy lattice animals. \emph{Stochastic
  Process. Appl.} \textbf{98} (2002), no.~1, 43--66.

\bibitem{Ras-Sep-10-ldp-}
Rassoul-Agha, F.; Sepp\"al\"ainen, T. \emph{A course on large deviation theory
  with an introduction to {G}ibbs measures}. 2010. Preprint,
  \urlprefix\url{http://www.math.utah.edu/~firas/Papers/rassoul-seppalainen-ldp.pdf}.

\bibitem{Ras-Sep-10-}
Rassoul-Agha, F.; Sepp\"al\"ainen, T. Process-level quenched large deviations
  for random walk in random environment. \emph{Ann. Inst. H. Poincar\'e Probab.
  Statist.} \textbf{47} (2011), no.~1, 214--242.

\bibitem{Rio-95}
Rio, E. The functional law of the iterated logarithm for stationary strongly
  mixing sequences. \emph{Ann. Probab.} \textbf{23} (1995), no.~3, 1188--1203.

\bibitem{Roc-70}
Rockafellar, R.~T. \emph{Convex analysis}. Princeton Mathematical Series, No.
  28, Princeton University Press, Princeton, N.J., 1970.

\bibitem{Ros-06}
Rosenbluth, J.~M. \emph{Quenched large deviation for multidimensional random
  walk in random environment: {A} variational formula}. ProQuest LLC, Ann
  Arbor, MI, 2006. Thesis (Ph.D.)--New York University,
  \urlprefix\url{http://arxiv.org/abs/0804.1444}.

\bibitem{Rub-Col-03}
Rubinstein, M.; Colby, R.~H. \emph{Polymer physics}. Oxford Univ. Press, 2003.

\bibitem{Rud-91}
Rudin, W. \emph{Functional analysis}. International Series in Pure and Applied
  Mathematics, McGraw-Hill Inc., New York, 2nd ed., 1991.

\bibitem{Sch-88}
Schroeder, C. Green's functions for the {S}chr{\"o}dinger operator with
  periodic potential. \emph{J. Funct. Anal.} \textbf{77} (1988), no.~1, 60--87.

\bibitem{Sol-75}
Solomon, F. Random walks in a random environment. \emph{Ann. Probability}
  \textbf{3} (1975), 1--31.

\bibitem{Spi-76}
Spitzer, F. \emph{Principles of random walks}. Springer-Verlag, New York, 2nd
  ed., 1976. Graduate Texts in Mathematics, Vol. 34.

\bibitem{Szn-94}
Sznitman, A.-S. Shape theorem, {L}yapounov exponents, and large deviations for
  {B}rownian motion in a {P}oissonian potential. \emph{Comm. Pure Appl. Math.}
  \textbf{47} (1994), no.~12, 1655--1688.

\bibitem{Szn-98}
Sznitman, A.-S. \emph{Brownian motion, obstacles and random media}. Springer
  Monographs in Mathematics, Springer-Verlag, Berlin, 1998.

\bibitem{Szn-04}
Sznitman, A.-S. Topics in random walks in random environment. In \emph{School
  and {C}onference on {P}robability {T}heory}, ICTP Lect. Notes, XVII, pp.
  203--266 (electronic), Abdus Salam Int. Cent. Theoret. Phys., Trieste, 2004.

\bibitem{Tem-72-eng}
Temkin, D.~E. One-dimensional random walks in a two-component chain.
  \emph{Soviet Math. Dokl.} \textbf{13} (1972), 1172--1176.

\bibitem{Var-84}
Varadhan, S. R.~S. \emph{Large deviations and applications}, \emph{CBMS-NSF
  Regional Conference Series in Applied Mathematics}, vol.~46. Society for
  Industrial and Applied Mathematics (SIAM), Philadelphia, PA, 1984.

\bibitem{Var-03-cpam}
Varadhan, S. R.~S. Large deviations for random walks in a random environment.
  \emph{Comm. Pure Appl. Math.} \textbf{56} (2003), no.~8, 1222--1245.
  Dedicated to the memory of J{{\"u}}rgen K. Moser.

\bibitem{Var-07}
Vargas, V. Strong localization and macroscopic atoms for directed polymers.
  \emph{Probab. Theory Related Fields} \textbf{138} (2007), no. 3-4, 391--410.

\bibitem{Yil-09-aop}
Yilmaz, A. Large deviations for random walk in a space-time product
  environment. \emph{Ann. Probab.} \textbf{37} (2009), no.~1, 189--205.

\bibitem{Yil-09-cpam}
Yilmaz, A. Quenched large deviations for random walk in a random environment.
  \emph{Comm. Pure Appl. Math.} \textbf{62} (2009), no.~8, 1033--1075.

\bibitem{Zei-04}
Zeitouni, O. Random walks in random environment. In \emph{Lectures on
  probability theory and statistics}, \emph{Lecture Notes in Math.}, vol. 1837,
  pp. 189--312, Springer, Berlin, 2004.

\bibitem{Zer-98-aap}
Zerner, M. P.~W. Directional decay of the {G}reen's function for a random
  nonnegative potential on {${\bf Z}^d$}. \emph{Ann. Appl. Probab.} \textbf{8}
  (1998), no.~1, 246--280.

\bibitem{Zer-98-aop}
Zerner, M. P.~W. Lyapounov exponents and quenched large deviations for
  multidimensional random walk in random environment. \emph{Ann. Probab.}
  \textbf{26} (1998), no.~4, 1446--1476.

\end{thebibliography}

\bigskip

\parindent=0pt

\begin{minipage}{2.4in}
{\sc Firas Rassoul-Agha}

Department of Mathematics

University of Utah

155 South 1400 East

Salt Lake City, UT 84109

%USA

E-mail: {\tt firas@math.utah.edu}

URL: \href{http://www.math.utah.edu/~firas}{\tt www.math.utah.edu/$\sim$firas}
\end{minipage}
\hfill
\begin{minipage}{2.55in}
{\sc Timo Sepp\"al\"ainen}

Department of Mathematics

University of Wisconsin-Madison

419 Van Vleck Hall

Madison, WI 53706

%USA

E-mail: {\tt seppalai@math.wisc.edu}

URL: \href{http://www.math.wisc.edu/~seppalai}{\tt www.math.wisc.edu/$\sim$seppalai}
\end{minipage}
\bigskip

\begin{minipage}{3.5in}
{\sc Atilla Yilmaz}

Department of Mathematics

Bo\u{g}azi\c{c}i University

TR-34342 Bebek

Istanbul, TURKEY

%USA

E-mail: {\tt  atilla.yilmaz@boun.edu.tr}

URL: \href{http://www.math.boun.edu.tr/instructors/yilmaz/}{\tt http://www.math.boun.edu.tr/instructors/yilmaz/}
\end{minipage}\bigskip

\end{document}